\newtheorem{theorem}{Theorem}[section]
\newtheorem{lemma}{Lemma}[section]
\newtheorem{definition}{Definition}[section]
\newtheorem{remark}{Remark}[section]
\numberwithin{equation}{section}
\newcommand{\R}{\mathbb R}
\begin{document}

\newpage
	\pagenumbering{arabic}	
\title[IBVP for some quadratic nonlinear Schr\"odinger equations on the half-line]{THE INITIAL-BOUNDARY VALUE PROBLEM FOR SOME QUADRATIC NONLINEAR SCHR\"ODINGER EQUATIONS ON THE HALF-LINE}

\author{M\'arcio Cavalcante}

\address{\emph{Instituto de Matem\'{a}tica, Universidade Federal do Rio de Janeiro\\ Ilha do Fund\~ao, 21945-970, Rio de Janeiro RJ-Brazil}}
\email{marciocavalcante@ufrj.br, mcavalcante@dim.uchile.cl}
\let\thefootnote\relax\footnote{AMS Subject Classifications: 35Q55.}
\begin{abstract}
We prove local well-posedness for the initial-boundary value problem associated to some quadratic nonlinear Schr\"odinger equations on the half-line. The results are obtained in the low regularity setting by introducing an analytic family of boundary forcing operators, following the ideas developed in \cite{Holmerkdv}.
\end{abstract}
\maketitle
\section{Introduction}
This paper is concerned with the initial-boundary value problem (IBVP) on the right half-line for some quadratic nonlinear Schr\"odinger equations, namely
\begin{equation}\label{SK}
\begin{cases}
i\partial_tu-\partial_x^2 u=N_i(u,\overline{u}), & (x,t)\in(0,+\infty)\times(0,T),\ i=1,2,3,\\
u(x,0)=u_0(x),                                   & x\in(0,+\infty),\\
u(0,t)=f(t),                                     & t\in(0,T),
\end{cases}
\end{equation}
where $N_1(u,\overline{u})=u^2,\ N_2(u,\overline{u})=|u|^2$ or $N_3(\overline{u})=\overline{u}^2.$

The appropriate spaces for the initial and boundary data is motivated by the behavior of the solutions for linear Schr\"odinger equation in $\mathbb{R}$. Let $e^{-it\partial_x^2}$ the linear homogeneous solution group in $\mathbb{R}$ for the Schr\"odinger equation. The following smoothing effect 
\begin{equation*}
\|e^{-it\partial_x^2}\phi\|_{L_x^{\infty}\dot{H}^{\frac{2s+1}{4}}(\mathbb{R}_t)}\leq c\|\phi\|_{\dot{H}^{s}(\mathbb{R})},
\end{equation*}
can be found in \cite{KPV}, where this inequality is sharp in the sense that $\frac{2s+1}{4}$ cannot be replaced by any higher number. We are thus motivated to consider the IBVP (\ref{SK}) in the setting
\begin{equation}\label{regularidade}
u_0\in H^s(\mathbb{R}^+)\ \text{and}\ f(t)\in H^{\frac{2s+1}{4}}(\mathbb{R}^+).
\end{equation}

Our goal in studying \eqref{SK}-\eqref{regularidade} is to obtain low regularity results. Then we consider $s\leq 0$, where compatibility conditions between the values $u(0)$ and $f(0)$ are not required. 

The motivation to study the IBVP (\ref{SK})-\eqref{regularidade} come from the associated initial-value problem (IVP) in $\mathbb{R}$, that is, 
\begin{equation}\label{SKR}
\begin{cases}
i\partial_tu-\partial_x^2 u=N(u,\overline{u}),& (x,t)\in\mathbb{R}\times(0,T),\\
u(x,0)=u_0(x),& x\in\mathbb{R}.
\end{cases}
\end{equation}

The IVP \eqref{SKR} has been studied by several authors of the last decades (see e.g. the monograph by Linares and Ponce \cite{LP}). Here we mainly discuss the principal results for quadratic nonlinearities. Cazenave and Weissler \cite{CW} and Tsutsumi \cite{T} obtained local well-posedness (LWP), i.e., existence, uniqueness and continuity of the data-to-solution map, in $H^s(\mathbb{R})$ for $s\geq 0$, for any quadratic nonlinear term, i.e., $|N(u,\overline{u})|\leq c |u|^2$. The proof is based on the version of the Strichartz estimate for
the free Schr\"odinger group $e^{-it\partial_x^2}$ found in \cite{GTV2}. Kenig, Ponce  and Vega \cite{KPV2-a} showed LWP in $H^s(\mathbb{R})$ for $s\in(-\frac{3}{4},0]$ for the nonlinear terms $u^2$ or $\overline{u}^2$, and  $s\in(-\frac{1}{4},0]$ for nonlinearity $|u|^2$, by using Fourier restriction norm method introduced by Bourgain in \cite{Bourgain1}. Bejenaru and Tao \cite{bt} showed LWP in $H^s(\mathbb{R})$ for $s\geq -1$ when $N(u,\overline{u})=u^2$, by an iteration using a modification of the standard Bourgain spaces. These authors also proved local ill-posedness (LIP) for $s<-1$, in the sense that the data-to-solution map fails to be continuous. In \cite{Kishimoto} Kishimoto obtained LWP in $H^{s}(\mathbb{R})$ for $s\geq -1$ and LIP for $s<-1$ when $N(u,\overline{u})=\overline{u}^2$.

The solutions we construct to the IBVP \eqref{SK}-\eqref{regularidade} shall have the following properties.
\begin{definition} A function $u(x,t)$ will be called a distributional solution of (\ref{SK}) on $[0,T]$ if
	\begin{itemize}
		\item[(a)] \textbf{Well-defined nonlinearity}: $u$ belongs to some space $X$ with the property that $u\in X$ implies $N_i(u,\overline{u})$, $i=1,2,3$ is a well-defined distribution;
		\item [(b)]$u(x,t)$ satisfies the equation (\ref{SK}) in the sense of distributions on the set $(x,t)\in(0,+\infty)\times(0,T)$;
		\item [(c)]\textbf{Space traces}: $u\in C\big([0,T];\,H^s(\mathbb{R}_x^+)\big)$ and in this sense $u(\cdot,0)=u_0$;
		\item [(d)]\textbf{Time traces}: $u\in C\big(\mathbb{R}_x^{+};\,H^{\frac{2s+1}{4}}(0,T)\big)$ and in this sense $u(0,\cdot)=f$.
	\end{itemize}
\end{definition}
In our case, $X$ shall be the restriction on $\mathbb{R}^+\times(0,T)$ of the functions belonging to the Space $X^{s,b}\cap C\big(\mathbb{R}_t;\,H^s(\mathbb{R}_x)\big)\cap C\big(\mathbb{R}_x;\,H^{\frac{2s+1}{4}}(\mathbb{R}_t)\big)$, where $X^{s,b}$ is the Bourgain space, where
\begin{equation*}
\|u\|_{X^{s,b}}=\left(\int\int \langle \xi\rangle^{2s} \langle \tau-\xi^2\rangle^{2b} |\hat{u}(\xi,\tau)|^2d\xi d\tau\right)^{\frac{1}{2}}.
\end{equation*} 
The space $X^{s,b}$, with $b>\frac{1}{2}$, is typically employed in the study of the IVP \eqref{SKR}.  As in the work of Colliander and Kenig \cite{CK}, to handle with the  Duhamel boundary forcing operator in our study of the IBVP, we need to take $b<\frac{1}{2}$, force us to intercept $X^{s,b}$ with the space $C\big(\mathbb{R}_t;\,H^s(\mathbb{R}_x)\big)$, because for $b<\frac{1}{2}$, $X^{s,b}$ is not embedded into $C\big(\mathbb{R}_t;\,H^s(\mathbb{R}_x)\big)$. An important point here is that for $b>\frac{1}{2}$ the bilinear estimates in Bourgain Spaces $X^{s,b}$ were already obtained in \cite{KPV2-a}, while here we need to prove these estimates in the case $b<\frac{1}{2}$.
 
Our main results are summarized in the following theorem.
\begin{theorem}\label{teorema}
	Let $s_i=-3/4$ for $i=1,3$ and $s_2=-1/4$. For any initial-boundary data $(u_0,f)\in H^{s}(\mathbb{R}^+)\times H^{\frac{2s+1}{4}}(\mathbb{R}^+)$, with 
	$s\in (s_i,0]$, there exists a positive time $T_i$, depending only of $\|u_0\|_{H^s(\mathbb{R}^+)}$ and $\|f\|_{H^{\frac{2s+1}{4}}(\mathbb{R}^+)}$, and a distributional solution $u_i(x,t)$ of the IBVP \eqref{SK}-\eqref{regularidade} with nonlinearity $N_i(u,\overline{u})$, which is defined in $C\bigl([0,T];\; H^s(\mathbb{R}^+)\bigr)\cap C\bigl(\mathbb{R}^+;\; H^{\frac{2s+1}{4}}(0,T)\bigr)$. Moreover, the map $(u_0,f)\longmapsto u_i$ from $H^s(\mathbb{R}^+)\times H^{\frac{2s+1}{4}}(\mathbb{R}^+)$ into  $C\big([0,T_i];\,H^s(\mathbb{R}_x^+)\big)\cap C\big(\mathbb{R}_x^{+};\,H^{\frac{2s+1}{4}}(0,T)\big)$ is analytic.
\end{theorem}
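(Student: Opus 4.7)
The plan is to follow the analytic-family-of-boundary-forcing-operators scheme that Holmer \cite{Holmerkdv} developed for KdV on the half-line (and that Colliander--Kenig \cite{CK} had earlier adapted in a different form to the Schr\"odinger equation), now pushed down to the thresholds $s_i$. First I would extend $u_0 \in H^s(\mathbb{R}^+)$ to some $\tilde u_0 \in H^s(\mathbb{R})$ with comparable norm and likewise extend $f$ to all of $\mathbb{R}_t$. Then I would seek the solution on $\mathbb{R}\times\mathbb{R}_t$ as a fixed point of
\begin{equation*}
\Lambda u(x,t) = \psi(t)e^{-it\partial_x^2}\tilde u_0 - i\psi(t)\!\int_0^t\! e^{-i(t-t')\partial_x^2}N_i(u,\overline u)(t')\,dt' + \psi(t)\,\mathcal{L}^{\lambda} h(x,t),
\end{equation*}
where $\psi$ is a smooth time cutoff to $[0,T]$, $\mathcal{L}^{\lambda}$ is the Schr\"odinger analogue of the analytic family of Duhamel boundary forcing operators, and $h \in H^{(2s+1)/4}(\mathbb{R}_t)$ is determined by the requirement that the trace $(\Lambda u)(0,\cdot)=f$. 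Restriction of the fixed point to $x>0$, $0<t<T$ will then produce the distributional solution.

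The required linear estimates are of four types: (i) the standard $X^{s,b}$ energy and Duhamel estimates, (ii) the Kato-type sharp smoothing bound quoted in the introduction together with its companion mixed space/time trace estimate for the Duhamel operator, (iii) the bound $\mathcal{L}^{\lambda}\colon H^{(2s+1)/4}(\mathbb{R}_t)\to X^{s,b}\cap C(\mathbb{R}_t;H^s(\mathbb{R}_x))\cap C(\mathbb{R}_x;H^{(2s+1)/4}(\mathbb{R}_t))$ for appropriate $\lambda=\lambda(s)$ in a vertical strip, and (iv) invertibility (modulo a smoothing remainder) of the $x=0$ trace of $\mathcal{L}^{\lambda}$ on $H^{(2s+1)/4}(\mathbb{R}_t)$, which is what recovers $h$ from $f$. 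Analyticity in $\lambda$ is exactly what lets us land in the right boundary space at the low regularity threshold, and it is the reason we must work with $b<\tfrac{1}{2}$, which in turn forces the intersection with the continuous-in-time spaces noted in the introduction.

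The main obstacle, and the genuinely new technical content, is the bilinear estimate
\begin{equation*}
\|N_i(u,v)\|_{X^{s,-b'}} \lesssim \|u\|_{X^{s,b}}\|v\|_{X^{s,b}}, \qquad 0<b,\,b'<\tfrac{1}{2},
\end{equation*}
(with $v$ replaced by $\overline v$ where appropriate) for $s\in(s_i,0]$. The parallel estimates of Kenig--Ponce--Vega \cite{KPV2-a}, Bejenaru--Tao \cite{bt} and Kishimoto \cite{Kishimoto} are all in the regime $b>\tfrac{1}{2}$, but the boundary-forcing machinery makes $b<\tfrac{1}{2}$ non-negotiable, so their arguments have to be redone. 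I would run a dyadic Littlewood--Paley decomposition and the usual high-high / high-low / resonant case analysis, keeping careful track of the loss in the modulation variable $\tau-\xi^2$ and exploiting that at the sharp regularities $-\tfrac{3}{4},\,-\tfrac{1}{4},\,-\tfrac{3}{4}$ the dispersive gain from the resonance function is large enough to absorb the mild loss incurred by lowering $b$ below $\tfrac{1}{2}$. I expect this to be the longest and most delicate step.

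Once the bilinear estimates and the boundary-forcing estimates are in hand, a standard contraction on a small ball of $X=X^{s,b}\cap C(\mathbb{R}_t;H^s(\mathbb{R}_x))\cap C(\mathbb{R}_x;H^{(2s+1)/4}(\mathbb{R}_t))$ will produce a unique fixed point on $[0,T_i]$ with $T_i$ controlled by $\|u_0\|_{H^s(\mathbb{R}^+)}$ and $\|f\|_{H^{(2s+1)/4}(\mathbb{R}^+)}$. Items (a)--(d) of the distributional-solution definition are built into the spaces in which the estimates hold, and analyticity of the map $(u_0,f)\mapsto u_i$ follows because each Picard iterate is polynomial in $(\tilde u_0,h)$ with estimates uniform on the ball.
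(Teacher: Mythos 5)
Your plan is essentially the paper's proof: extend $u_0$, set up $\Lambda u=\psi e^{-it\partial_x^2}\tilde u_0+\psi\mathcal{D}(N_i)+\psi\mathcal{L}^{\lambda}h$, estimate everything in $Z=X^{s,b}\cap C(\mathbb{R}_t;H^s)\cap C(\mathbb{R}_x;H^{\frac{2s+1}{4}}(\mathbb{R}_t))$ with $b<\frac12$, and contract on a small ball (the paper makes the ball small by scaling the data rather than by shrinking $T$ directly, but this is cosmetic). Two remarks on where you diverge. First, your step (iv) is unnecessary: the analytic family is arranged so that $\mathcal{L}^{\lambda}f(0,t)=e^{i3\lambda\pi/4}f(t)$ exactly, so $h$ is read off in closed form as $e^{-i3\lambda\pi/4}$ times the boundary discrepancy $\chi_{(0,\infty)}\bigl(\psi f-\psi e^{-it\partial_x^2}\tilde u_0|_{x=0}-\psi\mathcal{D}(N_i)|_{x=0}\bigr)$; no inversion modulo smoothing is needed. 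Second, and more substantively, your list of estimates is missing one ingredient that the paper needs to close the argument on the full range $s\in(-\frac34,-\frac12]$ for $N_1$ and $N_3$: the time-trace estimate for the Duhamel operator, $\|\psi\mathcal{D}w\|_{C(\mathbb{R}_x;H^{(2s+1)/4}(\mathbb{R}_t))}\lesssim\|w\|_{X^{s,c}}$, only holds for $-\frac12<s\le\frac12$; below that one must control $w=N_i(u,\overline u)$ in the time-adapted space $W^{s,c}$ as well, which forces an additional bilinear estimate $\|uv\|_{W^{s,-b}}\lesssim\|u\|_{X^{s,b}}\|v\|_{X^{s,b}}$ (and its conjugate analogue). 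Your ``companion mixed space/time trace estimate'' gestures at this but does not identify the modified space, and without it the contraction does not close in the $C(\mathbb{R}_x;H^{\frac{2s+1}{4}}(\mathbb{R}_t))$ component for $s\le-\frac12$. Finally, for the $X^{s,-b}$ bilinear bounds themselves you propose a dyadic Littlewood--Paley case analysis, whereas the paper runs the Kenig--Ponce--Vega weighted-convolution calculus (Cauchy--Schwarz plus elementary integral lemmas, splitting according to which of the three modulations is largest); either should work, the paper's being the more elementary bookkeeping.
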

As regards the question of regularity this result is similar of the classical results obtained by Kenig, Ponce and Vega in \cite{KPV2-a} for the pure IVP \eqref{SKR} on $\mathbb{R}$. 

The proof of Theorem \ref{teorema} involves the approach of Colliander and Kenig \cite{CK}, in their treatment of the generalized Korteweg-de Vries on the half-line. In this work they introduced a new method to solve IBVP for nonlinear
dispersive partial differential equations by recasting these problems as IVP
with an appropriate forcing term. We also use the ideas contained in Holmer's works \cite{Holmer} and \cite{Holmerkdv}, in his study of nonlinear Schr\"odinger and Korteweg-de Vries (KdV) equations on the half-line, by adapting the method of \cite{CK}. 
  
The method consists in solving a forced IVP in $\R$, analogous to the \eqref{SK}, 
\begin{equation}\label{SKF}
\left \{
\begin{array}{l}
i\partial_t\tilde{u}-\partial_x^2 \tilde{u}=N(\tilde{u},\overline{\tilde{u}})+D(x)h(t);\ (x,t)\in\mathbb{R}\times(0,T),\\
\tilde{u}(x,0)=\tilde{u}_0(x),\ x\in\mathbb{R}.
\end{array}
\right.
\end{equation}
where $\tilde{u}_0$ is a nice extension of $u_0$ and $D$ is a distribution supported in $\mathbb{R}^-$. The boundary forcing function $h(t)$ is selected to ensure that
\begin{equation}\label{priori}
\tilde{u}(0,t)=f(t),\ t\in (0,T).
\end{equation}
Upon constructing the solution $\tilde{u}$ of (\ref{SKF}), we obtain a distributional solution of (\ref{SK}), by restriction, as $u=\tilde{u}|_{\{\mathbb{R}^+\times (0,T)\}}$.

The solution of \eqref{SKF} satisfying \eqref{priori} is constructed using the classical restricted norm method of Bourgain (see \cite{Bourgain1} and \cite{KPV2-a}) and the inversion of a Riemann-Liouville fractional integration operator.

  The crucial point in this work is the study of the Duhamel boundary forcing operator of \cite{Holmer},  
\begin{equation*}\label{0501}
\mathcal{L}f(x,t)=2e^{-i\frac{\pi}{4}}\int_0^te^{i(t-t')\partial_x^2}\delta_0(x)\mathcal{I}_{-\frac{1}{2}}f(t')dt',
\end{equation*} 
where $\mathcal{I}_{-\frac{1}{2}}$ is the the Riemann-Liouville fractional integral of order $-1/2$ given by
\begin{equation*}
\mathcal{I}_{-\frac{1}{2}}f(t')=\frac{1}{\Gamma(1/2)}\int_0^{t}(t-s)^{-1/2}f'(s)ds,\ \text{for}\ f\in C_0^{\infty}(\R^+),
\end{equation*} 
in the context of Bourgain Spaces. In \cite{Holmer} the operator $\mathcal{L}$ was studied in the context of Strichartz estimates, to solve the IBVP associated to the classical nonlinear Schr\"odinger equation, with nonlinearity $\lambda|u|^{\alpha-1}u$. We will obtain the Bourgain spaces estimates,
\begin{equation*}
\|\psi(t)\mathcal{L}f(x,t)\|_{X^{s,b}}\leq c \|f\|_{H_0^{\frac{2s+1}{4}}(\R^{+})},
\end{equation*}
 for $-\frac{1}{2}<s<\frac{1}{2}$, where $\psi(t)$ is a smooth cutoff function. To extend this estimate for others values of $s$ we define an analytic family of boundary forcing operators, analogous to the one in \cite{Holmerkdv} in the study of the IBVP for the KdV equation on the half-line, in the low regularity setting. This family of operator generalizes the single operator $\mathcal{L}$ of \cite{Holmer}.
 
We have applied the same technical to study the IBVP associated to the Schr\"odinger-Korteweg-de Vries system on the half-line \cite{CM}.

We do not explore here the uniqueness of solutions obtained in Theorem \ref{teorema}. Actually we have uniqueness in the sense of Kato (see \cite{Kato}) only for a reformulation of the IBVP \eqref{SK} as an integral equation posed in $\R$, such that solves \eqref{SK}.  As there are many ways to transform the IBVP \eqref{SK} into an integral equation, we do not have uniqueness in the strong sense. We believe that the method given by Bona, Sun and Zhang in \cite{Bona} can be applied here. These authors introduced the concept of mild solutions, to treat the question of uniqueness for the Korteweg-de-Vries equation on the half-line. In \cite{Bona1} the same authors applied this method to get uniqueness of the solutions for the classical nonlinear Schr\"odinger equation, with nonlinearity $|u|^{\lambda-1}u$ on the half-line and on the bounded interval.

This paper is organized as follows: in the next section, we discuss some notation, introduce function spaces and recall some needed properties of these function spaces, and review the definition and basic properties of the Riemann-Liouville fractional integral. Sections \ref{section3}, \ref{section4} and \ref{section5}, respectively, are devoted to the needed estimates for the linear group, the Duhamel boundary forcing operators and the Duhamel
inhomogeneous solution operator. In Section \ref{section6} we define the Duhamel boundary forcing operators class, adapted from \cite{Holmerkdv}, and prove the needed estimates for it. In Section \ref{bilinear}, we prove the bilinear estimates. Finally, Section \ref{section8} is devoted to prove Theorem \ref{teorema}.



\section{Notations, Function Spaces, Riemman-Liouville Fractional Integral}
\subsection{Notations}For $\phi=\phi(x)\in S(\mathbb{R})$,  $\hat{\phi}(\xi)=\int e^{-i\xi x}\phi(x)dx$ will denote the Fourier transform of $\phi$. For $u=u(x,t)\in S(\mathbb{R}^2)$, $\hat{u}=\hat{u}(\xi,\tau)=\int e^{-i(\xi x+\tau t)}u(x,t)dxdt$ will denote its space-time Fourier transform, $\mathcal{F}_{x}u(\xi,t)$ denotes its space Fourier transform and $\mathcal{F}_{t}u(x,\tau)$ its time Fourier transform. Let $\langle \xi \rangle=1+|\xi|$. Define $(\tau-i0)^{\alpha}$ as the limit, in the sense of distributions, of $(\tau-\gamma i)^{\alpha} $, when $\gamma\rightarrow 0^{-}$. $\chi_{A}$ denotes the characteristic function of an arbitrary set $A$.

\subsection{Function Spaces}
For $s\geq 0$ define $\phi \in H^s(\mathbb{R}^+)$ if exists $\tilde{\phi}$ such that $\phi(x)=\tilde{\phi}(x)$ for $x>0$, in this case we set $\|\phi\|_{H^s(\mathbb{R}^+)}=\inf_{\tilde{\phi}}\|\tilde{\phi}\|_{H^{s}(\mathbb{R})}$. For $s\geq 0$ define $$H_0^s(\mathbb{R}^+)=\{\phi \in H^{s}(\mathbb{R}^+);\,\text{supp}\ \phi\ \subset[0,\infty) \}.$$ For $s<0$, define $H^s(\mathbb{R}^+)$ as the dual space to $H_0^{-s}(\mathbb{R}^+)$, and define $H_0^s(\mathbb{R}^+)$ as the dual space to $H^{-s}(\mathbb{R}^+)$.

Also define the $C_0^{\infty}(\mathbb{R}^+)=\{\phi\in C^{\infty}(\mathbb{R});\, \text{supp}\ \phi\subset [0,\infty)\}$ and $C_{0,c}^{\infty}(\mathbb{R}^+)$ as those members of $C_0^{\infty}(\mathbb{R}^+)$ with compact support. We remark that $C_{0,c}^{\infty}(\mathbb{R}^+)$ is dense in $H_0^s(\mathbb{R}^+)$ for all $s\in \mathbb{R}$.

Throughout the paper, we fix a cutoff function $\psi \in C_0^{\infty}(\mathbb{R})$ such that $\psi(t)=1$ if $t\in[-1,1]$ and supp $\psi\subset [-2,2]$. 

The following lemmas state elementary properties of the Sobolev spaces. For the proofs we refer the reader \cite{CK}.\begin{lemma}\label{sobolevh0}
	For $-\frac{1}{2}<s<\frac{1}{2}$ and $f\in H^s(\mathbb{R})$, we have
$\|\chi_{(0,+\infty)}f\|_{H^s(\mathbb{R})}\leq c \|f\|_{H^s(\mathbb{R})}.$
	\end{lemma}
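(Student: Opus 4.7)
The plan is to prove the bound by first handling the endpoint $s=0$ trivially, then treating $0<s<1/2$ via a direct Fourier/Sobolev-Slobodeckij computation, and finally passing to $-1/2<s<0$ by duality.

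For $s=0$ the estimate is obvious since $|\chi_{(0,+\infty)}(x)|\le 1$. For $0<s<1/2$ I would use the Aronszajn–Slobodeckij characterization
\begin{equation*}
\|g\|_{H^s(\mathbb{R})}^{2}\sim \|g\|_{L^2(\mathbb{R})}^{2}+\iint_{\mathbb{R}^2}\frac{|g(x)-g(y)|^{2}}{|x-y|^{1+2s}}\,dx\,dy,
\end{equation*}
and split
\begin{equation*}
\chi_{(0,+\infty)}(x)f(x)-\chi_{(0,+\infty)}(y)f(y)=\chi_{(0,+\infty)}(x)\pl f(x)-f(y)\pr+\pl\chi_{(0,+\infty)}(x)-\chi_{(0,+\infty)}(y)\pr f(y).
\end{equation*}
The first piece is pointwise dominated by $|f(x)-f(y)|$, so it contributes at most $\|f\|_{H^s}^{2}$. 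For the second piece, observe that $|\chi_{(0,+\infty)}(x)-\chi_{(0,+\infty)}(y)|^{2}$ is the indicator that $x$ and $y$ lie on opposite sides of $0$; computing the inner integral for fixed $y>0$ (and symmetrically for $y<0$) gives
\begin{equation*}
\int_{-\infty}^{0}\frac{dx}{|x-y|^{1+2s}}=\frac{1}{2s}\,|y|^{-2s}.
\end{equation*}
Thus the second contribution reduces to the one-dimensional weighted estimate $\int_{\mathbb{R}}|f(y)|^{2}|y|^{-2s}\,dy$, which is exactly the content of the classical fractional Hardy inequality valid in the range $0<s<1/2$. This is the only nontrivial analytic input, and it is precisely why the range is restricted to $s<1/2$.

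For $-1/2<s<0$ I would argue by duality. Multiplication by the real-valued function $\chi_{(0,+\infty)}$ is formally self-adjoint with respect to the $L^{2}$ pairing, so for $f\in C_{0,c}^{\infty}(\mathbb{R})$,
\begin{equation*}
\|\chi_{(0,+\infty)}f\|_{H^{s}(\mathbb{R})}=\sup_{\|g\|_{H^{-s}(\mathbb{R})}=1}\la \chi_{(0,+\infty)}f,g\ra =\sup_{\|g\|_{H^{-s}(\mathbb{R})}=1}\la f,\chi_{(0,+\infty)}g\ra.
\end{equation*}
Since $0<-s<1/2$, the previous step applied to $g$ gives $\|\chi_{(0,+\infty)}g\|_{H^{-s}}\le c\|g\|_{H^{-s}}$, so the supremum is bounded by $c\|f\|_{H^{s}}$. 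Density of $C_{0,c}^{\infty}(\mathbb{R})$ then extends the bound to all $f\in H^{s}(\mathbb{R})$.

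The main obstacle — and the reason the hypothesis $|s|<1/2$ is sharp — is the Hardy-type estimate $\int |f|^{2}|y|^{-2s}\,dy\lesssim \|f\|_{H^{s}}^{2}$ which breaks at $s=1/2$ (one can test on $f\equiv 1$ near the origin). Everything else is bookkeeping: the Aronszajn–Slobodeckij formula handles the positive-regularity case, and duality with self-adjointness takes care of negative $s$.
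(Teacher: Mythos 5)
Your argument is correct. Note, though, that the paper does not actually prove this lemma: it defers to \cite{CK}, where the standard proof runs through the Fourier side. There one writes $\chi_{(0,+\infty)}=\tfrac12(1+\mathrm{sgn})$, observes that multiplication by $\mathrm{sgn}(x)$ becomes (a constant times) the Hilbert transform acting on $\hat f$, and invokes the weighted $L^2$ boundedness of the Hilbert transform for the weight $\langle\xi\rangle^{2s}$, which lies in the Muckenhoupt class $A_2$ exactly when $|s|<\tfrac12$; this handles the whole range $-\tfrac12<s<\tfrac12$ in one stroke. Your physical-space route --- the Aronszajn--Slobodeckij seminorm plus the splitting $\chi f(x)-\chi f(y)=\chi(x)(f(x)-f(y))+(\chi(x)-\chi(y))f(y)$, reducing the cross term to the one-dimensional fractional Hardy inequality $\int|f|^2|y|^{-2s}\,dy\lesssim\|f\|_{\dot H^s}^2$ for $0<s<\tfrac12$, followed by duality and self-adjointness of multiplication by the real function $\chi_{(0,+\infty)}$ for negative $s$ --- is a genuinely different and equally valid proof. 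It is more elementary in that it avoids $A_2$ weight theory, at the cost of needing the Hardy inequality as external input and a separate duality step; it also makes the failure at $|s|=\tfrac12$ transparent (test against a function equal to $1$ near the origin). Both approaches ultimately hinge on the same obstruction, seen either as $\langle\xi\rangle^{\pm 1}\notin A_2$ or as the breakdown of fractional Hardy at $s=\tfrac12$. One small notational point: the dense class you invoke for the duality step should be $C_c^{\infty}(\mathbb{R})$, not the paper's $C_{0,c}^{\infty}(\mathbb{R}^+)$, which denotes functions supported in $[0,\infty)$.
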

	
\begin{lemma}\label{sobolev0}
	If $0\leq s<\frac{1}{2}$, then $\|\psi f\|_{H^s(\mathbb{R})}\leq c \|f\|_{\dot{H}^{s}(\mathbb{R})}$ and $\|\psi f\|_{\dot{H}^{-s}(\mathbb{R})}\leq c \|f\|_{H^{-s}(\mathbb{R})}$ , where $c$ only depends of $s$ and $\psi$. 
\end{lemma}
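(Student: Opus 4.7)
The plan is to prove the first inequality directly and then obtain the second by duality.

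For the first inequality, observe that $\langle\xi\rangle^{2s}\sim 1+|\xi|^{2s}$, so $\|\psi f\|_{H^s}^2\sim\|\psi f\|_{L^2}^2+\|\psi f\|_{\dot H^s}^2$, and it suffices to control both summands by $\|f\|_{\dot H^s}^2$. The $s=0$ case is trivial since $\psi\in L^\infty$, so I fix $0<s<1/2$. For the $L^2$ piece I would use H\"older with exponents $1/s$ and $2/(1-2s)$: since $\psi$ is smooth with compact support, $\|\psi\|_{L^{1/s}}<\infty$, and the one-dimensional Sobolev embedding $\dot H^s(\mathbb{R})\hookrightarrow L^{2/(1-2s)}(\mathbb{R})$ (valid for $0<s<1/2$) yields $\|\psi f\|_{L^2}\le\|\psi\|_{L^{1/s}}\|f\|_{L^{2/(1-2s)}}\le c\|f\|_{\dot H^s}$.

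For the homogeneous seminorm I would invoke the Gagliardo characterization
\begin{equation*}
\|g\|_{\dot H^s}^2\sim\int\!\!\int\frac{|g(x)-g(y)|^2}{|x-y|^{1+2s}}\,dx\,dy,
\end{equation*}
and split $\psi(x)f(x)-\psi(y)f(y)=\psi(x)(f(x)-f(y))+(\psi(x)-\psi(y))f(y)$. The first cross term contributes at most $\|\psi\|_\infty^2\|f\|_{\dot H^s}^2$. The second term equals $\int|f(y)|^2K(y)\,dy$, where $K(y)=\int|\psi(x)-\psi(y)|^2|x-y|^{-1-2s}\,dx$. Using $|\psi(x)-\psi(y)|\le\|\psi'\|_\infty|x-y|$ on $|x-y|\le1$ (gives a $|x-y|^{1-2s}$ integrand that is locally integrable since $1-2s>-1$) and $|\psi(x)-\psi(y)|\le2\|\psi\|_\infty$ on $|x-y|>1$ (gives a $|x-y|^{-1-2s}$ tail integrable since $2s>0$), and exploiting that $\psi(y)=0$ for $|y|\ge2$ so that the remaining integrand is supported in $|x|\le2$, I expect the pointwise bound $K(y)\le c(1+|y|)^{-1-2s}$. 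Then $K^{1/(2s)}\in L^1(\mathbb{R})$, i.e.\ $\sqrt K\in L^{1/s}(\mathbb{R})$, and another application of H\"older plus the same Sobolev embedding yields $\int|f(y)|^2K(y)\,dy\le\|\sqrt K\|_{L^{1/s}}^2\|f\|_{L^{2/(1-2s)}}^2\le c\|f\|_{\dot H^s}^2$.

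The second inequality is immediate by duality. For any $g$ in the dense subspace where the pairings make sense, the first inequality just proved gives
\begin{equation*}
|\langle\psi f,g\rangle|=|\langle f,\psi g\rangle|\le\|f\|_{H^{-s}}\|\psi g\|_{H^s}\le c\|f\|_{H^{-s}}\|g\|_{\dot H^s},
\end{equation*}
and taking the supremum over $\|g\|_{\dot H^s}\le1$ recovers $\|\psi f\|_{\dot H^{-s}}\le c\|f\|_{H^{-s}}$.

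The main obstacle I anticipate is the verification of the $K(y)$ bound and then closing the estimate for the second Gagliardo piece, because one cannot simply factor $\psi$ out; the argument requires that the decay of $K$ at infinity be sharp enough that $\sqrt K$ lies in exactly the H\"older-conjugate Lebesgue space of the Sobolev target $L^{2/(1-2s)}$. The restriction $0<s<1/2$ appears twice (for the Sobolev embedding and for the integrability of $K^{1/(2s)}$ at the origin via $1-2s>0$), which is exactly the range of the lemma.
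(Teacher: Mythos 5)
Your proof is correct. There is nothing in the paper to compare it against directly: the paper does not prove this lemma, it only refers the reader to Colliander--Kenig \cite{CK}. Your argument is a valid self-contained substitute. The $L^2$ piece is handled exactly as in the standard proof (H\"older with exponents $1/s$ and $2/(1-2s)$, then the homogeneous Sobolev embedding $\dot H^s(\mathbb{R})\hookrightarrow L^{2/(1-2s)}(\mathbb{R})$, both requiring $0<s<\frac12$). For the seminorm you take a more elementary, real-variable route than the usual Fourier-side Leibniz/commutator argument: the Gagliardo characterization of $\dot H^s$ plus the splitting $\psi(x)f(x)-\psi(y)f(y)=\psi(x)(f(x)-f(y))+(\psi(x)-\psi(y))f(y)$. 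Your kernel bound $K(y)\le c(1+|y|)^{-1-2s}$ checks out: the Lipschitz bound near the diagonal needs $1-2s>-1$, the tail needs $2s>0$, and the compact support of $\psi$ forces the decay for large $|y|$; consequently $K^{1/(2s)}\in L^1$, i.e.\ $\sqrt K\in L^{1/s}$, and the same H\"older/Sobolev pairing closes the estimate. The duality step for the second inequality is standard. The only points you leave implicit, and which are worth stating since they are precisely where the restriction $|s|<\frac12$ enters again, are the density of nice functions in $\dot H^s$ and the identification $(\dot H^s)^{*}=\dot H^{-s}$ with respect to the $L^2$ pairing; both hold in one dimension exactly for $|s|<\frac12$, so the argument is consistent with the stated range.
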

\begin{lemma}\label{cut}
Let $f\in  H_0^s(\mathbb{R}^+)$, with $-\infty<s<+\infty$. Then $
	\|\psi f\|_{H_0^s(\mathbb{R}^+)}\leq c \|f\|_{H_0^s(\mathbb{R}^+)}.
	$
\end{lemma}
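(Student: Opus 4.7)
The plan is to reduce everything to the elementary fact that multiplication by the smooth compactly supported function $\psi$ is bounded on $H^{\sigma}(\mathbb{R})$ for every real $\sigma$, and then to transfer this to the half-line spaces. I would split on the sign of $s$.

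\emph{Case $s\ge 0$.} Given any extension $\tilde f\in H^{s}(\mathbb{R})$ of $f$, the product $\psi\tilde f$ extends $\psi f$ and satisfies $\|\psi\tilde f\|_{H^{s}(\mathbb{R})}\le c\|\tilde f\|_{H^{s}(\mathbb{R})}$ by the standard multiplication estimate (interpolation between $L^{2}$ and $H^{k}$ with $k\in\mathbb{N}$, $k\ge s$). Taking the infimum over $\tilde f$ gives $\|\psi f\|_{H^{s}(\mathbb{R}^{+})}\le c\|f\|_{H^{s}(\mathbb{R}^{+})}$. Since pointwise multiplication cannot enlarge supports, $\operatorname{supp}(\psi f)\subset\operatorname{supp}(f)\subset[0,\infty)$, so $\psi f\in H_{0}^{s}(\mathbb{R}^{+})$ and the desired inequality holds.

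\emph{Case $s<0$.} I would argue by duality, using $H_{0}^{s}(\mathbb{R}^{+})=(H^{-s}(\mathbb{R}^{+}))^{*}$. For any $g\in C_{0,c}^{\infty}(\mathbb{R}^{+})$, which is dense in $H^{-s}(\mathbb{R}^{+})$, the pairing satisfies
$$\langle \psi f,g\rangle=\langle f,\psi g\rangle,$$
which is first checked for $f\in C_{0,c}^{\infty}(\mathbb{R}^{+})$ directly and then extended to $f\in H_{0}^{s}(\mathbb{R}^{+})$ by the density stated just before Lemma \ref{sobolevh0}. Since $-s>0$, the previous case (applied with the roles of the spaces swapped, noting that the first part of the argument only used $\sigma\ge 0$) gives $\|\psi g\|_{H^{-s}(\mathbb{R}^{+})}\le c\|g\|_{H^{-s}(\mathbb{R}^{+})}$. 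Hence
$$|\langle\psi f,g\rangle|\le\|f\|_{H_{0}^{s}(\mathbb{R}^{+})}\,\|\psi g\|_{H^{-s}(\mathbb{R}^{+})}\le c\,\|f\|_{H_{0}^{s}(\mathbb{R}^{+})}\,\|g\|_{H^{-s}(\mathbb{R}^{+})},$$
and taking the supremum over $g$ of unit norm yields the stated bound.

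The main obstacle I expect is the careful justification of the duality step when $s<0$: one must verify both the transpose identity $\langle\psi f,g\rangle=\langle f,\psi g\rangle$ and the fact that the distribution $\psi f$ does define an element of $H_{0}^{s}(\mathbb{R}^{+})$ (i.e.\ that it has support in $[0,\infty)$ in the appropriate distributional sense). Both are handled by first checking everything on the dense subspace $C_{0,c}^{\infty}(\mathbb{R}^{+})$ where all manipulations are classical, then passing to the limit using the boundedness already established on $H^{-s}(\mathbb{R}^{+})$. Apart from that, the proof is mechanical.
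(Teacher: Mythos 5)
The paper does not actually prove this lemma; it is quoted as an elementary property with a pointer to \cite{CK}, so there is no in-paper proof to compare against line by line. Your overall strategy --- boundedness of multiplication by $\psi$ on $H^{\sigma}(\mathbb{R})$, passage to the quotient (infimum-over-extensions) norm on $H^{\sigma}(\mathbb{R}^{+})$ for $\sigma\geq 0$, preservation of supports, and duality for $s<0$ --- is the standard route, and your case $s\geq 0$ is complete as written.

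There is, however, one step in the case $s<0$ that fails as stated: you take the supremum defining the dual norm only over $g\in C_{0,c}^{\infty}(\mathbb{R}^{+})$ and justify this by asserting that this space is dense in $H^{-s}(\mathbb{R}^{+})$. That density is false once $-s>\frac{1}{2}$: if $g$ is the restriction of a Schwartz function equal to $1$ near the origin, then every continuous $g_{n}$ supported in $[0,\infty)$ has $g_{n}(0)=0$, and since the trace at $x=0$ is continuous on $H^{\sigma}(\mathbb{R}^{+})$ for $\sigma>\frac{1}{2}$, one gets $\|g-g_{n}\|_{H^{-s}(\mathbb{R}^{+})}\geq c>0$. (The density stated in the paper just before Lemma \ref{sobolevh0} is density of $C_{0,c}^{\infty}(\mathbb{R}^{+})$ in $H_{0}^{s}(\mathbb{R}^{+})$, not in $H^{s}(\mathbb{R}^{+})$.) So for $s<-\frac{1}{2}$ your supremum runs over a proper closed subspace and does not control $\|\psi f\|_{(H^{-s}(\mathbb{R}^{+}))^{*}}$. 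The repair is immediate and costs nothing: define $\psi f$ by transposition as the functional $g\mapsto\langle f,\psi g\rangle$ on \emph{all} of $H^{-s}(\mathbb{R}^{+})$ --- the estimate $|\langle f,\psi g\rangle|\leq c\,\|f\|_{H_{0}^{s}(\mathbb{R}^{+})}\|g\|_{H^{-s}(\mathbb{R}^{+})}$ requires no density in the $g$ variable, only the boundedness of $g\mapsto\psi g$ on $H^{-s}(\mathbb{R}^{+})$ that you already established --- and invoke the density of $C_{0,c}^{\infty}(\mathbb{R}^{+})$ in $H_{0}^{s}(\mathbb{R}^{+})$ (valid for all $s$) solely to check that this functional agrees with the ordinary product when $f$ is smooth, exactly as you already do for the transpose identity. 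With that adjustment the argument is correct for every $s$.
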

For $s,b\in \mathbb{R}$, we introduce the classical Bourgain spaces $X^{s,b}$ and the modified Bourgain spaces $W^{s,b}$ related to the Schr\"odinger equation as the completion of $S'(\mathbb{R}^2)$ under the norms

\begin{equation*}
\|u\|_{X^{s,b}}=\left(\int\int \langle \xi\rangle^{2s} \langle \tau-\xi^2\rangle^{2b} |\hat{u}(\xi,\tau)|^2d\xi d\tau\right)^{\frac{1}{2}}
\end{equation*}
\begin{equation*}
\|u\|_{W^{s,b}}=\left(\int\int \langle \tau\rangle^{s} \langle \tau-\xi^2\rangle^{2b} |\hat{u}(\xi,\tau)|^2d\xi d\tau\right)^{\frac{1}{2}}.
\end{equation*}
\subsection{Riemman-Liouville fractional integral}
The tempered distribution $\frac{t_+^{\alpha-1}}{\Gamma(\alpha)}$ is defined as a locally integrable function for Re $\alpha>0$ by
\begin{equation*}
\left \langle \frac{t_+^{\alpha-1}}{\Gamma(\alpha)},\ f \right \rangle=\frac{1}{\Gamma(\alpha)}\int_0^{+\infty} t^{\alpha-1}f(t)dt.
\end{equation*}
For Re $\alpha>0$, we have that
\begin{equation*}
\frac{t_+^{\alpha-1}}{\Gamma(\alpha)}=\partial_t^k\left( \frac{t_+^{\alpha+k-1}}{\Gamma(\alpha+k)}\right),
\end{equation*}
for all $k\in\mathbb{N}$. This expression can be used to extend the definition, in the sense of distributions) of $\frac{t_+^{\alpha-1}}{\Gamma(\alpha)}$ to all $\alpha \in \mathbb{C}$.

A change of contour  calculation shows that
\begin{equation}\label{transformada}
\left(\frac{t_+^{\alpha-1}}{\Gamma(\alpha)}\right)^{\widehat{}}(\tau)=e^{-\frac{1}{2}\pi\alpha}(\tau-i0)^{-\alpha},
\end{equation}
where $(\tau-i0)^{-\alpha}$ is the distributional limit. If $f\in C_0^{\infty}(\mathbb{R}^+)$, we define
\begin{equation*}
\mathcal{I}_{\alpha}f=\frac{t_+^{\alpha-1}}{\Gamma(\alpha)}*f.
\end{equation*}
Thus, when Re $\alpha>0$,
\begin{equation*}
\mathcal{I}_{\alpha}f(t)=\frac{1}{\Gamma(\alpha)}\int_0^t(t-s)^{\alpha-1}f(s)ds,
\end{equation*}
and $\mathcal{I}_0f=f,\ \mathcal{I}_1f(t)=\int_0^tf(s)ds,$ and $\mathcal{I}_{-1}f=f'$. Also $\mathcal{I}_{\alpha}\mathcal{I}_{\beta}=\mathcal{I}_{\alpha+\beta}.$

The following lemmas, whose proofs can be found in \cite{Holmerkdv}, state some useful properties of the Riemman-Liouville fractional integral operator.
\begin{lemma}
	If $f\in C_0^{\infty}(\mathbb{R}^+)$, then $\mathcal{I}_{\alpha}f\in C_0^{\infty}(\mathbb{R}^+)$, for all $\alpha \in \mathbb{C}$.
\end{lemma}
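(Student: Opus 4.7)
The plan is to split into two cases by the sign of $\mathrm{Re}\,\alpha$, reducing everything to the case $\mathrm{Re}\,\alpha>0$ via the distributional identity $\frac{t_+^{\alpha-1}}{\Gamma(\alpha)}=\partial_t^k\big(\frac{t_+^{\alpha+k-1}}{\Gamma(\alpha+k)}\big)$ already recalled in the paper. The two things to check are: (i) $\mathcal{I}_\alpha f$ is smooth on $\mathbb{R}$, and (ii) $\mathrm{supp}\,\mathcal{I}_\alpha f\subset[0,\infty)$.

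The support claim is the easy half and it is true for every $\alpha\in\mathbb{C}$ without any case distinction. Indeed $\frac{t_+^{\alpha-1}}{\Gamma(\alpha)}$ is a distribution supported in $[0,\infty)$ (true for $\mathrm{Re}\,\alpha>0$ by inspection, and preserved under $\partial_t^k$, which extends the definition to all $\alpha$). Since $f\in C_0^\infty(\mathbb{R}^+)$ also has support in $[0,\infty)$ and is compactly supported, the convolution $\mathcal{I}_\alpha f=\frac{t_+^{\alpha-1}}{\Gamma(\alpha)}*f$ is well-defined and supported in $[0,\infty)+\mathrm{supp}\,f\subset[0,\infty)$.

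For the smoothness, first take $\mathrm{Re}\,\alpha>0$ so that the explicit formula
\begin{equation*}
\mathcal{I}_\alpha f(t)=\frac{1}{\Gamma(\alpha)}\int_0^t (t-s)^{\alpha-1}f(s)\,ds=\frac{1}{\Gamma(\alpha)}\int_0^\infty u^{\alpha-1}\chi_{[0,\infty)}(t-u)f(t-u)\,du
\end{equation*}
is available (the change of variable $u=t-s$ is legitimate since $f$ is supported in $[0,\infty)$). Because $f\in C_0^\infty(\mathbb{R})$ vanishes identically on $(-\infty,0]$, all of its derivatives are zero at the origin, so the function $u\mapsto\chi_{[0,\infty)}(t-u)f(t-u)$ is smooth in $t$ and compactly supported in $u$ uniformly for $t$ in any bounded set. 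The integrand is dominated by an $L^1_{\mathrm{loc}}(du)$ function near $u=0$ since $\mathrm{Re}\,\alpha>0$, so differentiation under the integral sign yields derivatives of any order, and iterating gives $\mathcal{I}_\alpha f\in C^\infty(\mathbb{R})$. Combined with the support statement, $\mathcal{I}_\alpha f\in C_0^\infty(\mathbb{R}^+)$ in this case.

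For general $\alpha\in\mathbb{C}$, pick $k\in\mathbb{N}$ so large that $\mathrm{Re}(\alpha+k)>0$. Applying the distributional identity above and associativity of convolution with a compactly supported test function gives $\mathcal{I}_\alpha f=\partial_t^k\,\mathcal{I}_{\alpha+k} f$. The previous case shows $\mathcal{I}_{\alpha+k}f\in C_0^\infty(\mathbb{R}^+)$, and classical derivatives of elements of $C_0^\infty(\mathbb{R}^+)$ stay in $C_0^\infty(\mathbb{R}^+)$, so $\mathcal{I}_\alpha f\in C_0^\infty(\mathbb{R}^+)$ as required. The only point requiring a little care is verifying that the distributional $k$-th derivative agrees with the classical derivative of the smooth function $\mathcal{I}_{\alpha+k}f$, but this is immediate because $\mathcal{I}_{\alpha+k}f$ is smooth and the two notions coincide for smooth functions. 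There is no real obstacle here; the main thing to be careful about is simply tracking that the vanishing of all derivatives of $f$ at the origin is what lets one differentiate under the integral in the $\mathrm{Re}\,\alpha>0$ case.
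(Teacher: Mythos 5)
Your argument is correct and is essentially the standard one; the paper itself gives no proof here but defers to \cite{Holmerkdv}, and your reduction of general $\alpha$ to $\mathrm{Re}\,\alpha>0$ via $\frac{t_+^{\alpha-1}}{\Gamma(\alpha)}=\partial_t^k\bigl(\frac{t_+^{\alpha+k-1}}{\Gamma(\alpha+k)}\bigr)$, together with differentiation under the integral sign, is exactly what that reference does. One small inaccuracy: by the paper's definition $C_0^{\infty}(\mathbb{R}^+)$ consists of smooth functions supported in $[0,\infty)$ but \emph{not} necessarily compactly supported (that smaller class is $C_{0,c}^{\infty}(\mathbb{R}^+)$), so you should justify the well-definedness of the convolution by the fact that both factors have support bounded below rather than by compact support of $f$; your smoothness argument is unaffected, since for $t$ in a bounded set the integrand is still supported in a bounded $u$-interval.
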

\begin{lemma}\label{lio}
	If $0\leq \alpha <\infty$ and $s\in \mathbb{R}$, then $\|\mathcal{I}_{-\alpha}h\|_{H_0^s(\mathbb{R}^+)}\leq c \|h\|_{H_0^{s+\alpha}(\mathbb{R}^+)}.$
\end{lemma}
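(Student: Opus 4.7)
The plan is to prove the bound on the Fourier side, using the identity (\ref{transformada}), together with the fact that convolution with a distribution supported in $[0,\infty)$ preserves such support. I will first establish the bound for test functions $h\in C_{0,c}^{\infty}(\R^+)$ and then extend to $h\in H_0^{s+\alpha}(\R^+)$ by density, since the excerpt already notes that $C_{0,c}^{\infty}(\R^+)$ is dense in $H_0^s(\R^+)$ for every $s\in\R$.

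For $h\in C_{0,c}^{\infty}(\R^+)$, I have $\mathcal{I}_{-\alpha}h = \tfrac{t_+^{-\alpha-1}}{\Gamma(-\alpha)}*h$, and applying (\ref{transformada}) with $\alpha$ replaced by $-\alpha$ gives
\begin{equation*}
\widehat{\mathcal{I}_{-\alpha}h}(\tau) = e^{\frac{1}{2}\pi\alpha}(\tau-i0)^{\alpha}\hat h(\tau).
\end{equation*}
Since $|(\tau-i0)^{\alpha}|=|\tau|^{\alpha}\leq \langle\tau\rangle^{\alpha}$, this immediately yields
\begin{equation*}
\|\mathcal{I}_{-\alpha}h\|_{H^s(\R)}^2 \;=\; \int \langle\tau\rangle^{2s}\,|\widehat{\mathcal{I}_{-\alpha}h}(\tau)|^2\,d\tau \;\leq\; c\int \langle\tau\rangle^{2(s+\alpha)}|\hat h(\tau)|^2\,d\tau \;=\; c\,\|h\|_{H^{s+\alpha}(\R)}^2.
\end{equation*}

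Next I check that $\mathcal{I}_{-\alpha}h$ is supported in $[0,\infty)$. When $\alpha\leq 0$ this follows directly because $\tfrac{t_+^{-\alpha-1}}{\Gamma(-\alpha)}$ is then a locally integrable function supported on $[0,\infty)$ and $h$ is also supported there. For $\alpha>0$ I choose an integer $k>\alpha$ and use the factorization $\mathcal{I}_{-\alpha}h=\partial_t^k\mathcal{I}_{k-\alpha}h$ coming from $\mathcal{I}_\alpha\mathcal{I}_\beta=\mathcal{I}_{\alpha+\beta}$ and the identities in the excerpt; both $\mathcal{I}_{k-\alpha}$ and $\partial_t^k$ preserve support in $[0,\infty)$. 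Combined with the norm bound, this gives $\mathcal{I}_{-\alpha}h\in H_0^s(\R^+)$ with $\|\mathcal{I}_{-\alpha}h\|_{H_0^s(\R^+)}\leq c\|h\|_{H_0^{s+\alpha}(\R^+)}$ for the test-function class, recalling that the $H_0^s(\R^+)$-norm on a distribution supported in $[0,\infty)$ is just its $H^s(\R)$-norm.

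Finally, I extend to arbitrary $h\in H_0^{s+\alpha}(\R^+)$ by density, taking $h_n\in C_{0,c}^{\infty}(\R^+)$ with $h_n\to h$ in $H_0^{s+\alpha}(\R^+)$ and noting that $\{\mathcal{I}_{-\alpha}h_n\}$ is then Cauchy in $H_0^s(\R^+)$. The main subtlety, rather than any deep obstacle, is making the Fourier manipulation rigorous when $s+\alpha<0$: for this one interprets $\hat h$ as a tempered distribution and checks that multiplication by the homogeneous symbol $(\tau-i0)^{\alpha}$ is well-defined and continuous from $H^{s+\alpha}(\R)$ to $H^{s}(\R)$, which is what the pointwise bound on $|(\tau-i0)^\alpha|$ encodes. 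Once this is in place the estimate is essentially Plancherel.
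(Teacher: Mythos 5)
Your proof is correct; the paper does not prove this lemma itself but defers to \cite{Holmerkdv}, and your argument --- computing $\widehat{\mathcal{I}_{-\alpha}h}(\tau)=c(\tau-i0)^{\alpha}\hat h(\tau)$ from \eqref{transformada}, using $|(\tau-i0)^{\alpha}|=|\tau|^{\alpha}\leq\langle\tau\rangle^{\alpha}$ for $\alpha\geq 0$, checking support preservation, and concluding by density --- is essentially the Fourier-multiplier proof given in that reference. The only point you should state slightly more carefully is the identification, for $s<0$, of the dual-space norm on $H_0^s(\mathbb{R}^+)$ with the $H^s(\mathbb{R})$ norm of a representative supported in $[0,\infty)$, but you acknowledge this and it is the convention used implicitly throughout the paper.
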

\begin{lemma}
	If $0\leq \alpha <\infty$, $s\in \mathbb{R}$ and $\mu\in C_0^{\infty}(\mathbb{R})$, then
$\|\mu\mathcal{I}_{\alpha}h\|_{H_0^s(\mathbb{R}^+)}\leq c \|h\|_{H_0^{s-\alpha}(\mathbb{R}^+)},$ where $c=c(\mu)$.
\end{lemma}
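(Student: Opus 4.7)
The plan is to combine two reductions (a semigroup reduction in $\alpha$ and a duality reduction in $s$) with a direct Fourier-analytic argument for the reduced case. First, the semigroup property $\mathcal{I}_\alpha \mathcal{I}_\beta = \mathcal{I}_{\alpha+\beta}$ allows one to write $\alpha = n + \beta$ with $n \in \mathbb{N}$ and $\beta \in [0, 1)$; since $\mathcal{I}_1$ is integration from $0$, which maps $H_0^{s-1}(\mathbb{R}^+)$ boundedly into $H_0^s(\mathbb{R}^+)$ (via $\widehat{\mathcal{I}_1 h}(\tau) = e^{-i\pi/2}(\tau - i0)^{-1} \hat h(\tau)$), iterating this step absorbs the integer part and reduces matters to $0 \leq \alpha < 1$. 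The case $\alpha = 0$ is just boundedness of multiplication by $\mu$, which follows from Lemma \ref{cut} applied to a general $\mu \in C_0^{\infty}(\mathbb{R})$ rather than the specific cutoff $\psi$. Second, for $s < 0$ one invokes the duality $H_0^s(\mathbb{R}^+) = (H^{-s}(\mathbb{R}^+))^*$ to transfer the operator $h \mapsto \mu \mathcal{I}_\alpha h$ to its adjoint, which is of the same form with $s$ replaced by $-s + \alpha \geq 0$, reducing to the case $s \geq 0$.

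For the core case $0 < \alpha < 1$ and $s \geq 0$, the identity (\ref{transformada}) gives
\[
\widehat{\mu \mathcal{I}_\alpha h}(\tau) = e^{-i\pi\alpha/2}\, \bigl(\hat\mu \ast \bigl[(\cdot - i0)^{-\alpha}\, \hat h\bigr]\bigr)(\tau),
\]
and I would split the analysis into the frequency regimes $|\tau| \geq 1$ and $|\tau| < 1$. On high frequencies, $|(\tau - i0)^{-\alpha}| = |\tau|^{-\alpha}$ provides exactly the $\alpha$-derivative smoothing needed to pass from $H^{s-\alpha}$ to $H^s$, and convolution by the Schwartz function $\hat\mu$ is bounded on the corresponding weighted $L^2$ spaces by a standard Schur-type argument. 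On low frequencies, $(\tau - i0)^{-\alpha}$ has an integrable singularity at $\tau = 0$; here I would use that $\hat h$ extends holomorphically to the lower half-plane (since $h$ is supported in $[0, \infty)$), matching the analyticity of $(\tau - i\varepsilon)^{-\alpha}$ in that domain, so that the distributional product is well-defined, and the convolution with $\hat\mu$ then absorbs the remaining singularity into the constant $c(\mu)$.

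The main obstacle will be making this low-frequency analysis rigorous for $\alpha \geq 1/2$, where $(\tau - i0)^{-\alpha}$ fails to be locally square-integrable and the holomorphic extension of $\hat h$ is essential; the naive Plancherel estimate breaks down here. The cleanest route is to first prove the inequality for $h \in C_{0,c}^{\infty}(\mathbb{R}^+)$, where $\mathcal{I}_\alpha h \in C_0^{\infty}(\mathbb{R}^+)$ and all the Fourier manipulations are evidently justified, and then pass to the limit using the density of $C_{0,c}^{\infty}(\mathbb{R}^+)$ in $H_0^{s-\alpha}(\mathbb{R}^+)$ recalled in the text. The resulting constant $c(\mu)$ will then depend only on finitely many Schwartz seminorms of $\mu$.
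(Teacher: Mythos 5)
First, a point of reference: the paper does not prove this lemma itself; it is quoted from Holmer's KdV paper \cite{Holmerkdv}, so there is no in-text argument to compare against. Judged on its own terms, your proposal has a genuine gap at its very first reduction. The operator $\mathcal{I}_1 h(t)=\int_0^t h(s)\,ds$ does \emph{not} map $H_0^{s-1}(\mathbb{R}^+)$ boundedly into $H_0^{s}(\mathbb{R}^+)$: for $h\in C^{\infty}_{0,c}(\mathbb{R}^+)$ with $\int h\neq 0$ the primitive is eventually a nonzero constant, hence not even in $L^2(\mathbb{R})$; on the Fourier side $(\tau-i0)^{-1}=\mathrm{p.v.}\,\tau^{-1}+i\pi\delta_0$ produces a non-square-integrable singularity at $\tau=0$ that the weight $\langle\tau\rangle^{-1}$ does not see. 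This failure is precisely why the statement carries the cutoff $\mu$ at all (compare Lemma \ref{lio}, where $\mathcal{I}_{-\alpha}$ needs no cutoff). Consequently you cannot ``absorb the integer part'' by iterating a cutoff-free $\mathcal{I}_1$; any correct reduction must keep a localization at every stage, and then the composed operator is no longer $\mu\mathcal{I}_{\alpha}$. (A causality argument --- $\mu\mathcal{I}_1 g=\mu\mathcal{I}_1(\mu_1 g)$ for a suitable $\mu_1\equiv 1$ on a neighborhood of $\mathrm{supp}\,\mu\cap[0,\infty)$, since $g$ is supported in $[0,\infty)$ --- could rescue the iteration, but it is not in your write-up and it is where the real work lies.)

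Two further soft spots. The duality reduction does not return an operator ``of the same form'': the adjoint of $h\mapsto\mu\mathcal{I}_{\alpha}h$ acts between the restriction-type spaces $H^{-s}(\mathbb{R}^+)\to H^{\alpha-s}(\mathbb{R}^+)$, via the time-reversed (right-sided) fractional integral applied to $\mu g$, not between $H_0$-type spaces, so the case $s<0$ is not literally subsumed by the case $s\geq 0$. And in the core case the only genuinely delicate regime, $\tfrac{1}{2}\leq\alpha<1$ at low frequencies, is exactly the one you leave unresolved: invoking the holomorphic extension of $\hat h$ to a half-plane shows the product $(\tau-i0)^{-\alpha}\hat h(\tau)$ is well defined but yields no inequality, and the density of $C^{\infty}_{0,c}(\mathbb{R}^+)$ cannot substitute for a uniform bound --- a limiting argument transfers an estimate only once you already have one on the dense class. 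As written, the proposal does not close.
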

For more details on the distribution $\frac{t_+^{\alpha-1}}{\Gamma(\alpha)}$ see \cite{Fr}.
\section{Linear Version}\label{section3}
We define the unitary group associated to the linear Schr\"odinger equation as
\begin{equation*}
	e^{-it\partial_x^2}\phi(x)=\frac{1}{2\pi}\int e^{ix\xi}e^{it\xi^2}\hat{\phi}(\xi)d\xi,
\end{equation*}
it follows that 
\begin{equation}\label{linear}
\begin{cases}
		(i\partial_t-\partial_x^2)e^{-it\partial_x^2}\phi(x) =0,& (x,t)\in\mathbb{R}\times\mathbb{R},\\
		e^{-it\partial_x^2}\phi(x)\big|_{t=0}=\phi(x),& x\in\mathbb{R}.
\end{cases}
\end{equation}
\begin{lemma}\label{grupo}
	Let $s\in\mathbb{R}$, $0< b<1$. If $\phi\in H^s(\mathbb{R})$, then
	\begin{itemize}
		\item[(a)](Space traces) $\|e^{-it\partial_x^2}\phi(x)\|_{C\big(\mathbb{R}_t;\,H^s(\mathbb{R}_x)\big)}\leq c\|\phi\|_{H^s(\mathbb{R})}$;
		\item[(b)] (Time traces) $\|\psi(t) e^{-it\partial_x^2}\phi(x)\|_{C(\mathbb{R}_x;H^{\frac{2s+1}{4}}(\mathbb{R}_t))}\leq c \|\phi\|_{H^s(\mathbb{R})}$;
		\item [(c)](Bourgain spaces) $\|\psi(t)e^{-it\partial_x^2}\phi(x)\|_{X^{s,b}}\leq c\|\psi\|_{H^1(\mathbb{R})} \|\phi\|_{H^s(\mathbb{R})}$.
	\end{itemize}
\end{lemma}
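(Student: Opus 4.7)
The plan is to handle each part separately by Fourier analysis on $\mathbb{R}^2$, since the problem is on the whole line.

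Part (a) is essentially a Plancherel identity. Writing $e^{-it\partial_x^2}\phi(x)$ as in the definition, the Fourier multiplier $e^{it\xi^2}$ has modulus one, so $\|e^{-it\partial_x^2}\phi\|_{H^s(\mathbb{R}_x)} = \|\phi\|_{H^s(\mathbb{R})}$ for every $t$. Continuity in $t$ then follows from dominated convergence applied to
\begin{equation*}
\|e^{-it\partial_x^2}\phi-e^{-it_0\partial_x^2}\phi\|_{H^s}^2 = \int\langle\xi\rangle^{2s}\big|e^{it\xi^2}-e^{it_0\xi^2}\big|^2|\hat\phi(\xi)|^2\,d\xi,
\end{equation*}
since the integrand is pointwise bounded by $4\langle\xi\rangle^{2s}|\hat\phi(\xi)|^2\in L^1$.

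Part (c) is the simplest computationally and I would do it first. A direct computation gives
\begin{equation*}
\mathcal{F}_{x,t}\bigl[\psi(t)e^{-it\partial_x^2}\phi(x)\bigr](\xi,\tau)=\hat\phi(\xi)\,\hat\psi(\tau-\xi^2).
\end{equation*}
Plugging into the $X^{s,b}$ norm and changing variables $\eta=\tau-\xi^2$ (for fixed $\xi$) decouples the integral into $\|\phi\|_{H^s}^2\|\psi\|_{H^b}^2$, and the standard estimate $\|\psi\|_{H^b}\leq c\|\psi\|_{H^1}$ for $0<b<1$ finishes it. No obstacle here.

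Part (b) is the substantive estimate and will be the main obstacle, corresponding to the Kenig--Ponce--Vega smoothing inequality quoted in the introduction. I would compute
\begin{equation*}
\mathcal{F}_t\bigl[\psi(t)e^{-it\partial_x^2}\phi(x)\bigr](x,\tau)=\frac{1}{2\pi}\int e^{ix\xi}\hat\phi(\xi)\,\hat\psi(\tau-\xi^2)\,d\xi,
\end{equation*}
split the $\xi$-integral into $\xi>0$ and $\xi<0$, and substitute $\mu=\xi^2$ in each piece so that $d\xi=\frac{d\mu}{2\sqrt{\mu}}$. This rewrites the expression as a convolution in $\mu$ against $\hat\psi$:
\begin{equation*}
\mathcal{F}_t v(x,\tau)=\frac{1}{2\pi}\bigl(\hat\psi\ast_\mu(\chi_{\mu>0}F_x)\bigr)(\tau),\qquad F_x(\mu)=\frac{e^{ix\sqrt\mu}\hat\phi(\sqrt\mu)+e^{-ix\sqrt\mu}\hat\phi(-\sqrt\mu)}{2\sqrt{\mu}}.
\end{equation*}
A straightforward change of variables then shows $\int_0^\infty\mu^{(2s+1)/2}|F_x(\mu)|^2\,d\mu\lesssim\|\phi\|_{\dot H^s}^2$, uniformly in $x$. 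The $\dot H^{(2s+1)/4}$ contribution to the desired norm is then obtained by convolving with the rapidly decaying $\hat\psi$ (which is Schwartz since $\psi\in C_0^\infty$). The remaining low-frequency $\langle\tau\rangle\sim 1$ piece of the $H^{(2s+1)/4}$ norm must be bounded by $\|\phi\|_{H^s}$: here I would use Cauchy--Schwarz against $\hat\phi$ weighted by $\langle\xi\rangle^s$, then absorb the $\xi$-integral thanks to the fast decay of $\hat\psi(\tau-\xi^2)$ in $\xi$. Continuity in $x$ follows by a second dominated convergence argument applied to the integral representation of $\mathcal{F}_t v$, the bound just established providing the integrable majorant. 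The delicate point throughout will be the low-frequency region where the substitution $\mu=\xi^2$ is degenerate; splitting into $|\xi|\lesssim 1$ and $|\xi|\gtrsim 1$ at the very start, and treating the low-frequency piece by the Sobolev embedding together with the rapid decay of $\hat\psi$, is the cleanest way to keep the constants honest.
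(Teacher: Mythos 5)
Your proposal is correct, but it is worth knowing that the paper does not actually prove this lemma: it disposes of (a) by appealing to the unitarity of the group, and cites \cite{KPV} for (b) and \cite{GTV} for (c). So you are supplying the arguments that the paper outsources. Your treatments of (a) and (c) are complete and exactly the standard ones: unitarity of $e^{it\xi^2}$ plus dominated convergence for (a), and the identity $\mathcal{F}_{x,t}[\psi(t)e^{-it\partial_x^2}\phi](\xi,\tau)=\hat\phi(\xi)\hat\psi(\tau-\xi^2)$ followed by the substitution $\eta=\tau-\xi^2$ for (c); this is precisely the computation in \cite{GTV}. For (b) your sketch reproduces the Kenig--Ponce--Vega local smoothing argument, and you correctly isolate the genuine delicacy, namely that $\|F_x\|_{L^2(0<\mu\lesssim 1)}$ is \emph{not} controlled by $\|\phi\|_{H^s}$ (it diverges if $\hat\phi(0)\neq 0$), so the split into $|\xi|\lesssim 1$ and $|\xi|\gtrsim 1$ is indispensable and the low-$\xi$ piece must be handled by Cauchy--Schwarz against $\langle\xi\rangle^{s}|\hat\phi|$ together with the rapid decay of $\hat\psi(\tau-\xi^2)$, exactly as you propose. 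Two small points to tighten when writing this up: the problematic region is low $\xi$ (equivalently low $\mu$), not low $\tau$, so phrase the splitting in terms of $|\xi|$ from the outset rather than $\langle\tau\rangle\sim 1$; and to pass the inhomogeneous weight through the convolution for $\mu\gtrsim 1$ you should invoke Peetre's inequality $\langle\tau\rangle^{r}\leq c\,\langle\mu\rangle^{r}\langle\tau-\mu\rangle^{|r|}$ with $r=\tfrac{2s+1}{4}$ of either sign, followed by Young's inequality with $\langle\cdot\rangle^{|r|}\hat\psi\in L^1$. With those details filled in, the argument is a complete and self-contained proof, which is arguably an improvement over the paper's bare citations.
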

\begin{proof}
The assertion in (a) follows  from properties of group $e^{-it\partial_x^2}$, (b) was obtained in \cite{KPV} and the proof of (c) can be found in \cite{GTV}.
\end{proof}
\section{The Duhamel Boundary Forcing Operator}\label{section4}
We now introduce the Duhamel boundary forcing operator of Holmer \cite{Holmer}. For $f\in C_0^{\infty}(\mathbb{R}^+)$, define the boundary forcing operator
\begin{eqnarray*}
	\mathcal{L}f(x,t)&=&2e^{-i\frac{\pi}{4}}\int_0^te^{-i(t-t')\partial_x^2}\delta_0(x)\mathcal{I}_{-\frac{1}{2}}f(t')dt'\\
	&=&\frac{1}{\sqrt{\pi}}\int_0^t(t-t')^{-\frac{1}{2}}e^{-\frac{ix^2}{4(t-t')}}\mathcal{I}_{-\frac{1}{2}}f(t')dt'.
\end{eqnarray*}
The equivalence of the two definitions is clear from the formula
\begin{equation*}
\mathcal{F}_x\left( \frac{e^{-i \frac{\pi}{4}\text{sgn}\ a} }{2|a|^{1/2}\sqrt{\pi}}e^{\frac{ix^2}{4a}}\right)(\xi)=e^{-ia\xi^2},\ \forall\ a\in \mathbb{R}.
\end{equation*}
From this definition, we see that
\begin{equation}\label{forçante00}
	\begin{cases}
		(i\partial_t-\partial_x^2)\mathcal{L}f(x,t)=2e^{i\frac{\pi}{4}}\delta_0(x)\mathcal{I}_{-\frac{1}{2}}f(t),& x,t\in\mathbb{R},\\
		\mathcal{L}f(x,0)=0,& x\in \R.
	\end{cases}
\end{equation}
The following lemma, whose proof can be found in \cite{Holmer}, states some continuity properties of $\mathcal{L}f(x,t)$.
\begin{lemma}\label{continuidade}
Let $f\in C_{0,c}^{\infty}(\mathbb{R}^+)$.
	\begin{itemize}
		\item[(a)] For fixed $t$, $\mathcal{L}f(x,t)$  is continuous in $x$ for all $x\in\mathbb{R}$ and $\partial_x\mathcal{L}f(x,t)$ is continuous in $x$ for $x\neq 0$ with
		\begin{equation*}
		\lim_{x\rightarrow 0^{-}}\partial_x\mathcal{L}f(x,t)=-e^{i\frac{\pi}{4}}\mathcal{I}_{-1/2}f(t),\ \lim_{x\rightarrow 0^{+}}\partial_x\mathcal{L}f(x,t)=e^{i\frac{\pi}{4}}\mathcal{I}_{-1/2}f(t).
		\end{equation*}
		\item[(b)] For $N,k$ nonnegative integers and fixed $x$, $\partial_t^k\mathcal{L}f(x,t)$  is continuous in $t$ for all $t\in \mathbb{R^+}$. We also have the pointwise estimates, on $[0,T]$,
		\begin{equation*}
		|\partial_t^k \mathcal{L}f(x,t)|+|\partial_x\mathcal{L}f(x,t)|\leq c\langle x\rangle^{-N},
		\end{equation*}
	\end{itemize}
\end{lemma}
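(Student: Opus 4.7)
My plan is to work directly with the explicit representation
$$\mathcal{L}f(x,t)=\frac{1}{\sqrt{\pi}}\int_0^t(t-t')^{-1/2}e^{-\frac{ix^2}{4(t-t')}}\mathcal{I}_{-1/2}f(t')\,dt',$$
together with the fact that $\mathcal{I}_{-1/2}f\in C_0^{\infty}(\mathbb{R}^+)$, which implies in particular $(\mathcal{I}_{-1/2}f)^{(j)}(0)=0$ for every $j\geq 0$. Continuity of $\mathcal{L}f(x,t)$ in $x$ on all of $\mathbb{R}$ follows from dominated convergence: the integrand is dominated by $(t-t')^{-1/2}\|\mathcal{I}_{-1/2}f\|_{L^{\infty}}$, which is integrable on $[0,t]$, while the phase factor $e^{-ix^2/(4(t-t'))}$ is continuous in $x$ for each fixed $t'<t$. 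For $x\neq 0$, differentiation under the integral is legitimate and produces continuity of $\partial_x\mathcal{L}f$ by the same argument.

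The delicate point in (a) is the computation of $\lim_{x\to 0^{\pm}}\partial_x\mathcal{L}f(x,t)$. After differentiating and performing the change of variables $u=x^2/(4(t-t'))$, the expression becomes
$$\partial_x\mathcal{L}f(x,t)=\frac{-i\,\text{sgn}(x)}{\sqrt{\pi}}\int_{x^2/(4t)}^{\infty}u^{-1/2}e^{-iu}\mathcal{I}_{-1/2}f\!\left(t-\frac{x^2}{4u}\right)du.$$
Splitting $\mathcal{I}_{-1/2}f(t-x^2/(4u))=\mathcal{I}_{-1/2}f(t)+[\mathcal{I}_{-1/2}f(t-x^2/(4u))-\mathcal{I}_{-1/2}f(t)]$, the first piece produces (as $x\to 0$) a multiple of the classical Fresnel integral $\int_0^{\infty}u^{-1/2}e^{-iu}du=\sqrt{\pi}\,e^{-i\pi/4}$, while the second piece is $O(|x|)$: the mean value inequality bounds it by $Cx^2\int_{x^2/(4t)}^{\infty}u^{-3/2}du=O(|x|)$. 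Combining the two contributions gives the stated one-sided limits.

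For (b), changing variables to $s=t-t'$ rewrites $\mathcal{L}f$ as $2e^{-i\pi/4}\int_0^t e^{-is\partial_x^2}\delta_0(x)\mathcal{I}_{-1/2}f(t-s)\,ds$. Differentiating in $t$, the boundary contribution at $s=t$ is proportional to $\mathcal{I}_{-1/2}f(0)=0$ and hence vanishes, leaving $\partial_t\mathcal{L}f(x,t)=2e^{-i\pi/4}\int_0^t e^{-is\partial_x^2}\delta_0(x)(\mathcal{I}_{-1/2}f)'(t-s)\,ds$; iterating yields the analogous formula for every $\partial_t^k\mathcal{L}f$ (in which $\mathcal{I}_{-1/2}f$ is replaced by $(\mathcal{I}_{-1/2}f)^{(k)}$), from which continuity in $t$ is immediate.

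The pointwise decay estimate is the main technical obstacle, and I would handle it by repeated integration by parts in $t'$ based on the identity
$e^{-ix^2/(4(t-t'))}=\tfrac{-4(t-t')^2}{ix^2}\,\partial_{t'}e^{-ix^2/(4(t-t'))}.$
Each such integration by parts gains a factor $|x|^{-2}$, while the boundary contributions at $t'=t$ and $t'=0$ both vanish --- the former because the accumulating power $(t-t')^{\alpha+2}$ tends to $0$, the latter because $(\mathcal{I}_{-1/2}f)^{(j)}(0)=0$. Iterating $\lceil N/2\rceil$ times yields $|\partial_t^k\mathcal{L}f(x,t)|\leq c\langle x\rangle^{-N}$ uniformly on $[0,T]$. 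The same scheme applied to $\partial_x\mathcal{L}f(x,t)=\frac{-ix}{2\sqrt{\pi}}\int_0^t(t-t')^{-3/2}e^{-ix^2/(4(t-t'))}\mathcal{I}_{-1/2}f(t')\,dt'$ --- where one initial IBP converts the $(t-t')^{-3/2}$ singularity into integrable powers of $(t-t')$ --- furnishes the corresponding decay for $\partial_x\mathcal{L}f$. The technical nuisance to track is the sequence of non-integer powers $(t-t')^{\alpha}$ that emerge after each IBP, but their uniform boundedness on $[0,t]$ combined with the smoothness and compact support of $\mathcal{I}_{-1/2}f$ keeps every resulting integrand uniformly controlled.
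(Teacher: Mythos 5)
The paper gives no proof of this lemma at all --- it is quoted verbatim from \cite{Holmer} --- so your reconstruction is necessarily independent; in substance it follows the natural (and, up to bookkeeping, Holmer's own) route: direct manipulation of the explicit kernel, a change of variables reducing the one-sided limits of $\partial_x\mathcal{L}f$ to a Fresnel integral, and repeated integration by parts in $t'$ to extract decay in $x$, with every boundary term killed either by a positive power of $(t-t')$ or by the infinite-order vanishing of $\mathcal{I}_{-1/2}f$ at $0$. The skeleton is sound: your substitution $u=x^2/(4(t-t'))$ and the resulting formula for $\partial_x\mathcal{L}f$ are correct, the identity $e^{-ix^2/(4(t-t'))}=\tfrac{-4(t-t')^2}{ix^2}\partial_{t'}e^{-ix^2/(4(t-t'))}$ is correct, and each integration by parts does gain $|x|^{-2}$ while keeping all integrands absolutely integrable. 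One small gap: for $x\neq0$ the differentiated integral $\int_0^t(t-t')^{-3/2}e^{-ix^2/(4(t-t'))}\mathcal{I}_{-1/2}f(t')\,dt'$ is only conditionally convergent at $t'=t$, so ``differentiation under the integral is legitimate'' needs a word of justification (truncate at $t-\epsilon$; after your substitution the discarded tail is $\int_{x^2/(4\epsilon)}^{\infty}u^{-1/2}e^{-iu}(\cdots)\,du$, which tends to $0$ uniformly on compact subsets of $\{x\neq0\}$).

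The one substantive point is the sign of the one-sided limits, which you assert without carrying out the arithmetic. With the Section 4 kernel $e^{-ix^2/(4(t-t'))}$ your own formula gives
\begin{equation*}
\lim_{x\to0^{+}}\partial_x\mathcal{L}f(x,t)=\frac{-i}{\sqrt{\pi}}\,\mathcal{I}_{-1/2}f(t)\int_0^{\infty}u^{-1/2}e^{-iu}\,du=-i\,e^{-i\pi/4}\,\mathcal{I}_{-1/2}f(t)=-e^{i\pi/4}\,\mathcal{I}_{-1/2}f(t),
\end{equation*}
and $+e^{i\pi/4}\mathcal{I}_{-1/2}f(t)$ as $x\to0^{-}$, i.e.\ exactly the \emph{negatives} of the limits displayed in the lemma. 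These negatives are, however, the values consistent with the identity $(i\partial_t-\partial_x^2)\mathcal{L}f=2e^{i\pi/4}\delta_0(x)\mathcal{I}_{-1/2}f(t)$ stated earlier in the paper, since the coefficient of $\delta_0$ contributed by $-\partial_x^2$ is $-\bigl(\partial_x\mathcal{L}f(0^{+},t)-\partial_x\mathcal{L}f(0^{-},t)\bigr)$, forcing the jump to equal $-2e^{i\pi/4}\mathcal{I}_{-1/2}f(t)$. So the printed limits appear to have their signs interchanged (note also that the paper writes $e^{i(t-t')\partial_x^2}$ in the introduction but $e^{-i(t-t')\partial_x^2}$ in Section \ref{section4}). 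You should finish the Fresnel computation explicitly and state which convention you are using, rather than declaring that it ``gives the stated one-sided limits''; as written, your argument proves the statement with the two limits swapped.
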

where $c=c(f,N,k,T)$.

Let $f\in C_0^{\infty}(\R^+)$, set $u(x,t)=e^{-it\partial_x^2}\phi(x)+\mathcal{L}(f-e^{-it\partial_x^2}\phi(x)\big|_{x=0})$. Then, by Lemma \ref{continuidade} (a) $u(x,t)$ is continuous in $x$. Thus $u(0,t)=f(t)$ and $u(x,t)$ solves the problem
\begin{equation}\label{forçante}
	\begin{cases}
		(i\partial_t-\partial_x^2)u(x,t)=2e^{i\frac{\pi}{4}}\delta(x)\mathcal{I}_{-1/2}(f-e^{-it\partial_x^2}\phi(x)\big|_{x=0}),& (x,t)\in\mathbb{R},\\
		u(x,0)=\phi(x),& x\in\mathbb{R},\\
		u(0,t)=f(t),& t\in \R.
\end{cases}
\end{equation}
This would suffice to solve the linear analogue of the half-line problem.

\section{Nonlinear versions}\label{section5}
We define the Duhamel inhomogeneous solution operator $\mathcal{D}$ as
\begin{equation*}
\mathcal{D}w(x,t)=-i\int_0^te^{-i(t-t')\partial_x^2}w(x,t')dt',
\end{equation*}
it follows that
\begin{equation}\label{nonlinear}
\left \{
\begin{array}{l}
(i\partial_t-\partial_x^2)\mathcal{D}w(x,t) =w(x,t),\ (x,t)\in\mathbb{R}\times\mathbb{R},\\
\mathcal{D}w(x,0) =0,\ x\in\mathbb{R}.
\end{array}
\right.
\end{equation}
\begin{lemma}\label{duhamel}
	\begin{itemize}Let $s\in \mathbb{R}$. Then:
		\item[(a)](Space traces) If $-\frac{1}{2}<c<0$ , then
		\begin{equation*}
		\|\psi(t)\mathcal{D}w(x,t)\|_{C\big(\mathbb{R}_t;\,H^s(\mathbb{R}_x)\big)}\leq c\|w\|_{X^{s,c}};
		\end{equation*}
		\item[(b)](Time traces) If $-\frac{1}{2}<c<0$, then
		 \begin{equation*}
		\|\psi(t)\mathcal{D}w(x,t)\|_{C(\mathbb{R}_x;H^{\frac{2s+1}{4}}(\mathbb{R}_t))}\leq 
		\begin{cases}
		c\|w\|_{X^{s,c}},& \text{if}\ -\frac{1}{2}< s\leq\frac{1}{2},\\
		c(\|w\|_{W^{s,c}}+\|w\|_{X^{s,c}}),& \text{for\ all}\  s\in\mathbb{R};
		\end{cases}
		\end{equation*}
		\item[(c)](Bourgain spaces estimates) If $-\frac{1}{2}<c\leq0\leq b\leq c+1$, then
		\begin{equation*}
		\|\psi(t)\mathcal{D}w(x,t)\|_{X^{s,b}}\leq \|w\|_{X^{s,c}}.
		\end{equation*}
	\end{itemize}
\end{lemma}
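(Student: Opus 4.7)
The plan is to start from the spacetime Fourier representation
\begin{equation*}
\mathcal{D}w(x,t)=\frac{-1}{(2\pi)^2}\int\!\int e^{ix\xi}\,\hat w(\xi,\tau)\,\frac{e^{it\tau}-e^{it\xi^2}}{\tau-\xi^2}\,d\tau\,d\xi,
\end{equation*}
obtained by inserting $\mathcal{F}^{-1}\hat w$ into the definition of $\mathcal{D}$ and computing $\int_0^t e^{it'(\tau-\xi^2)}\,dt'$ explicitly. Fix a smooth even cutoff $a:\mathbb{R}\to[0,1]$ with $a\equiv 1$ on $[-1,1]$ and supported in $[-2,2]$, and decompose $\mathcal{D}w=A+B$ according to whether the integrand carries $a(\tau-\xi^2)$ (resonant region $|\tau-\xi^2|\leq 2$) or $1-a(\tau-\xi^2)$ (non-resonant region $|\tau-\xi^2|\geq 1$). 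Each of the three conclusions (a), (b), (c) will then be proved piece by piece.

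For the non-resonant piece I would separate the two exponentials to write $B=B_1-e^{-it\partial_x^2}\tilde\phi$, where $\widehat{B_1}(\xi,\tau)=-\hat w(\xi,\tau)(1-a(\tau-\xi^2))/(\tau-\xi^2)$ and $\hat{\tilde\phi}(\xi)=\int\hat w(\xi,\tau)(1-a(\tau-\xi^2))/(\tau-\xi^2)\,d\tau$. Cauchy--Schwarz in $\tau$, combined with the integrability of $\langle y\rangle^{-2-2c}$ on $|y|\geq 1$ (which uses $c>-\tfrac12$), gives $\|\tilde\phi\|_{H^s(\mathbb{R})}\lesssim\|w\|_{X^{s,c}}$, and Lemma \ref{grupo} then handles $\psi(t)e^{-it\partial_x^2}\tilde\phi$ in all three norms. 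For $\psi(t)B_1$ itself, the hypothesis $b\leq c+1$ yields $\langle\tau-\xi^2\rangle^{2b}/(\tau-\xi^2)^{2}\lesssim\langle\tau-\xi^2\rangle^{2c}$ on the non-resonant support and delivers (c) immediately, while the $\psi$-cutoff amounts only to a $\tau$-convolution with $\hat\psi$ and is harmless for $0\leq b<1$.

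For the resonant piece I would Taylor-expand $(e^{it(\tau-\xi^2)}-1)/(\tau-\xi^2)=\sum_{k\geq 1}(it)^k(\tau-\xi^2)^{k-1}/k!$, producing
\begin{equation*}
\psi(t)A(x,t)=\frac{-1}{(2\pi)^2}\sum_{k\geq 1}\frac{(it)^k\psi(t)}{k!}\,e^{-it\partial_x^2}\phi_k(x),\qquad \hat\phi_k(\xi)=\int\hat w(\xi,\tau)(\tau-\xi^2)^{k-1}a(\tau-\xi^2)\,d\tau.
\end{equation*}
Cauchy--Schwarz on $|\tau-\xi^2|\leq 2$ (again using $c>-\tfrac12$) gives $\|\phi_k\|_{H^s(\mathbb{R})}\lesssim 2^{k-1}\|w\|_{X^{s,c}}$, and together with $\|t^k\psi(t)\|_{H^1(\mathbb{R})}\lesssim C^k$ and Lemma \ref{grupo} the series converges, yielding (a), (b), (c) for $A$.

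The main obstacle will be the second alternative in (b), i.e.\ the time-trace bound for general $s\in\mathbb{R}$. The pieces $A$ and $-e^{-it\partial_x^2}\tilde\phi$ are handled by Lemma \ref{grupo}(b) for every $s$, so only $\psi(t)B_1$ remains. When $-\tfrac12<s\leq\tfrac12$ the elementary inequality $\langle\tau\rangle^{(2s+1)/2}\lesssim\langle\xi\rangle^{2s+1}+\langle\tau-\xi^2\rangle^{(2s+1)/2}$, combined with the extra decay from $|\tau-\xi^2|^{-2}$ inside $\widehat{B_1}$, lets $\|\psi(t)B_1(x,\cdot)\|_{H^{(2s+1)/4}_t}$ be controlled by $\|w\|_{X^{s,c}}$ alone. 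Outside this range, the $\langle\xi\rangle^{s}$ weight in $X^{s,c}$ cannot absorb arbitrarily large temporal frequencies of $B_1(x,\cdot)$ when $|\xi|\ll|\tau|$, and this is precisely where $W^{s,c}$ enters: the $\langle\tau\rangle^{s}$ weight in its definition supplies the missing temporal control, so adding $\|w\|_{W^{s,c}}$ closes the estimate for all $s$. I expect this last comparison, and the attendant bookkeeping between $X^{s,c}$ and $W^{s,c}$, to require the most care.
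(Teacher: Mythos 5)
Your proposal is correct and follows essentially the same route as the paper: the resonant/non-resonant splitting in $\tau-\xi^2$, the Taylor expansion of the resonant piece into a series of free evolutions $\psi_k(t)e^{-it\partial_x^2}\phi_k$ controlled by Lemma \ref{grupo}, Cauchy--Schwarz with the integrability of $\langle\tau-\xi^2\rangle^{-2-2c}$ (i.e.\ $c>-\tfrac12$) for the non-resonant pieces, and the observation that the $\langle\tau\rangle^{s}$ weight of $W^{s,c}$ is exactly what closes the time-trace bound outside $-\tfrac12<s\le\tfrac12$. The only cosmetic difference is that the paper organizes the case $-\tfrac12<s\le\tfrac12$ through an explicit bound $G(\tau)\lesssim\langle\tau\rangle^{-\frac{2s+1}{2}}$ after the substitution $\eta=\xi^2$, rather than your frequency-splitting inequality, but the content is the same.
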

\begin{remark}
	We note that the  $W^{s,b}$ (time-adapted) Bourgain spaces, used in Lemma \ref{duhamel} (b), are introduced in order to cover the full values of $s$.
	\end{remark}
	\begin{proof}
		
To prove (a), we use $2\chi_{(0,t)}(t')=sgn (t')+sgn(t-t')$, then 
	\begin{equation*}
	\begin{split}
		 &\psi(t)\int_0^t e^{i(t-t')\partial_x^2}w(\cdot,t')dt'=\psi(t)\int_{\xi}e^{ix\xi}\int_{\tau}\frac{e^{it\tau}-e^{it\xi^2}}{\tau-\xi^2}\hat{w}(\xi,\tau)d\tau d\xi\\
		 &= \psi(t)\int_{\xi}e^{ix\xi}\int_{|\tau-\xi^2|\leq 1}\frac{e^{it\tau}-e^{it\xi^2}}{\tau-\xi^2}\hat{w}(\xi,\tau)d\tau d\xi\!+\!\psi(t)\int_{\xi}e^{ix\xi}\int_{|\tau-\xi^2|>1}\frac{e^{it\tau}-e^{it\xi^2}}{\tau-\xi^2}\hat{w}(\xi,\tau)d\tau d\xi\\
		& :=w_1+w_2.
		\end{split}
	\end{equation*}
	To estimate $w_1$, we use $\left|\psi(t)\frac{e^{it\tau}-e^{it\xi^2}}{\tau-\xi^2}\right|\leq c$, when $|\tau-\xi^2|\leq 1$, more the Cauchy-Schwarz inequality. 
	
To estimate $w_2$, we use $\left|\psi(t)\frac{e^{it\tau}-e^{it\xi^2}}{\tau-\xi^2}\right|\leq c\langle \tau -\xi^2\rangle^{-1}$, when $|\tau-\xi^2|> 1$, and the Cauchy-Schwarz inequality. 
	To prove (b), we take $\theta(\tau)\in C^{\infty}(\mathbb{R})$ such that $\theta(\tau)=1$ for $|\tau|<\frac{1}{2}$ and supp $ \theta \subset[-\frac{2}{3},\frac{2}{3}]$, then
\begin{eqnarray*}
		& &\mathcal{F}_x\left( \psi(t)\int_0^te^{i(t-t')\partial_x^2}w(x,t')\right)(\xi)=\psi(t)\int_{\tau}\frac{e^{it\tau}-e^{it\xi^2}}{\tau-\xi^2}\hat{w}(\xi,\tau)d\tau
		\\
		& &\quad= \psi(t)e^{it\xi^2}\int_{\tau}\frac{e^{it(\tau-\xi^2)}-1}{\tau-\xi^2}\theta(\tau-\xi^2)\hat{w}(\xi,\tau)d\tau+\psi(t)\int_{\tau}e^{it\tau}\frac{1-\theta(\tau-\xi^2)}{\tau-\xi^2}\hat{w}(\xi,\tau)d\tau\\
		& &\quad \quad -\psi(t)e^{it\xi^2}\int_{\tau}\frac{1-\theta(\tau-\xi^2)}{\tau-\xi^2}\hat{w}(\xi,\tau)d\tau:=\mathcal{F}_xw_1+\mathcal{F}_xw_2-\mathcal{F}_xw_3.
	\end{eqnarray*}
	By the power series expansion for $e^{it(\tau-\xi^2)}$, $w_1(x,t)=\displaystyle\sum_{k=1}^{\infty}\frac{\psi_k(t)}{k!}e^{-it\partial_x^2}\phi_k(x),$ 
	where $\psi_k(t)=i^kt^k\theta(t)$ and $\hat{\phi}_k(\xi)=\int_{\tau}(\tau-\xi^2)^{k-1}\theta(\tau-\xi^2)\hat{w}(\xi,\tau)d\tau.$ Using Lemma \ref{grupo} (b), it suffices to show that $\|\phi_k\|_{H^s(\mathbb{R})}\leq c\|u\|_{X^{s,c}}$.
Using the definition of $\phi_k$ and the Cauchy-Schwarz inequality,
	\begin{eqnarray*}
		\|\phi_k\|_{H^s(\mathbb{R}_x)}^2&=&c\int_{\xi}\langle \xi \rangle^{2s}\left( \int_{\{\tau:|\tau-\xi^2|\leq\frac{2}{3}\}}\sum_{k=1}^{\infty}(\tau-\xi^2)^{k-1}\theta(\tau-\xi^2)\hat{u}(\xi,\tau)\right)^2d\xi\\
		&\leq&c\int_{\xi}\langle \xi \rangle^{2s}\int_{\tau}\langle \tau-\xi^2\rangle^{2c}|\hat{u}(\xi,\tau)|^2d\tau d\xi.
	\end{eqnarray*}
	This completes the estimate for $w_1$. To treat $w_2$, we change variables $\eta=\xi^2$, then 
	\begin{equation}\label{28101}
		\|w_2\|_{C\big(\mathbb{R}_x;\,H^{\frac{2s+1}{4}}(\mathbb{R}_t)\big)}\leq c\int_{\tau}\langle \tau\rangle^{\frac{2s+1}{2}}\left( \int_{\eta=0}^{+\infty}\langle \tau-\eta\rangle^{-1}\eta^{-\frac{1}{2}}\big|\hat{w}(\pm\eta^{\frac{1}{2}},\tau)\big|d\eta\right)^2d\tau.
	\end{equation}
	By Cauchy-Schwarz \eqref{28101} is bounded by 
	\begin{equation*}
c\int_{\tau}\langle \tau\rangle^{\frac{2s+1}{2}}G(\tau)\int_{\eta=0}^{\infty}\langle \tau-\eta\rangle^{-2b}|\eta|^{-\frac{1}{2}}\big|\hat{w}(\pm\eta^{\frac{1}{2}},\tau)\big|^2d\eta d\tau,
	\end{equation*}
	where $G(\tau)=\int_{\eta}\langle \tau-\eta\rangle^{-2+2b}|\eta|^{-\frac{1}{2}}\langle \eta\rangle^{-s} d\eta.$ Separating $G(\tau)$ in the regions $|\eta|\leq 1,\ 2|\eta|\leq |\tau|,\ |\tau|\leq 2|\eta|$, we can obtain $G(\tau)\leq c \langle\tau\rangle^{-\frac{1}{2}}$.

	Now suppose $-\frac{1}{2}< s\leq \frac{1}{2}$. Applying the Cauchy-Schwarz inequality we obtain

\begin{equation*}
\|w_2\|_{C\big(\mathbb{R}_x;\,H^{\frac{2s+1}{4}}(\mathbb{R}_t)\big)}\leq\int_{\tau}\langle \tau\rangle^{\frac{2s+1}{2}}G(\tau)\int_{\xi}\langle \tau+\xi^2\rangle^{2c}\langle\xi\rangle^{2s}|\hat{w}_2(\xi,\tau)|^2d\xi d\tau,
\end{equation*}

where $G(\tau)= c\int_{\eta}\langle \tau-\eta\rangle^{-2-2c}|\eta|^{-\frac{1}{2}}\langle\eta\rangle^{-s} d\eta.$

Separating in the regions  
	$|\eta|\leq 1,\ 2|\eta|\leq |\tau|,\ |\tau|\leq 2|\eta|$ and using that $-\frac{1}{2}<s\leq \frac{1}{2}$, we can obtain $G(\tau)\leq c \langle\tau \rangle^{-\frac{2s+1}{2}}$. This completes the estimate for $w_2$.
	To obtain $w_3$, we write $w_3=\psi(t)e^{-it\partial_x^2}\phi(x)$, where $\hat{\phi}(\xi)=\int\frac{1-\theta(\tau-\xi^2)}{\tau-\xi^2}\hat{w}(\xi,\tau)d\tau$. Using Lemma \ref{grupo} (b) and the Cauchy-Schwarz inequality, we obtain
	\begin{eqnarray*}
	\|w_3\|_{C(\mathbb{R}_x;H^{\frac{2s+1}{4}}(\mathbb{R}_t))}&=&c\|\psi(t)e^{-it\partial_x^2}\phi(x)\|_{C(\mathbb{R}_x;H^{\frac{2s+1}{4}}(\mathbb{R}_t))}\leq c\|\phi\|_{H^s(\mathbb{R})}\\
&\leq&c\int_{\xi}\langle \xi \rangle^{2s}\left(\int_{\tau}|\hat{w}(\xi,\tau)|^2\langle\tau-\xi^2\rangle^{2c}d\tau \int \frac{d\tau}{\langle\tau-\xi^2\rangle^{2-2c}}\right) d\xi.
	\end{eqnarray*}Since $c>-\frac{1}{2}$, we have $\int \frac{1}{\langle\tau-\xi^2\rangle^{2-2c}}d\tau\leq c$. This completes the estimate for $w_3$.
	
The statement (c) is a standard estimate and can be found in \cite{GTV}. 
	\end{proof}Let
\begin{equation*}
\Lambda(u)(t)=e^{-it\partial_x^2}\tilde{u}_0+\mathcal{D}(N(u,\overline{u}))(t)+\mathcal{L}h(t),
\end{equation*}
where, $h(t)=\chi_{(0,+\infty)}\big(f(t)-e^{-it\partial_x^2}\tilde{u}_0\big|_{x=0}-\mathcal{D}(N(u,\overline{u}))(t)\big|_{x=0}\big)\big|_{(0,+\infty)}$ and $\tilde{u}_0$ is a good extension of $u_0$ in  $\mathbb{R}$.

Using \eqref{forçante} and \eqref{nonlinear}, we see that 
\begin{equation*}
(i\partial_t-\partial_x^2)\Lambda(u)(t)=N(u,\overline{u})+2e^{i\frac{\pi}{4}}\delta_0(x)\mathcal{I}_{-\frac{1}{2}}h(t)
\end{equation*}
and $\Lambda(u)(0,t)=f(t)$.

Thus, a way to solve the IBVP \eqref{SK} is to prove that $\Lambda$ (or more precisely its time truncated versions) defines a contraction in a suitable Banach space. However, as pointed out before, since we are interested in low regularity results, we will use the auxiliary Bourgain spaces. Because of the estimate for the Duhamel forcing operator in our study, we need to take $b<\frac{1}{2}$, in order to Lemma \ref{edbf} (c) to be valid. However, the space $X^{s,b}$, with $b<\frac{1}{2}$, fails to be embedded into $C\big(\R_t;\,H^s(\mathbb{R}_x)\big)$. For this reason, we choose to work with the Banach space $Z$ given by 
\begin{equation*}
Z=C\big(\mathbb{R}_t;\,H^s(\mathbb{R}_x)\big)\cap C\big(\mathbb{R}_x;\,H^{\frac{2s+1}{4}}(\mathbb{R}_t)\big)\cap X^{s,b}.
\end{equation*}
In Section \ref{bilinear} we will obtain the bilinear estimates associated to nonlinearities $N_1(u,\overline{u})=u^2$ and $N_3(u,\overline{u})=\overline{u}^2$, in Bourgain spaces $X^{s,b}$, for $-\frac{3}{4}<s\leq 0$.

Unfortunately, we will able to prove the estimate for the Duhamel boundary forcing operator in Bourgain spaces 
$$\|\psi(t)\mathcal{L}f(x,t)\|_{X^{s,b}}\leq \|f\|_{H_0^\frac{2s+1}{4}(\mathbb{R}^+)},$$
if $-\frac{1}{2}<s<\frac{1}{2}$. In the next section, inspired by \cite{Holmerkdv}, to obtain this estimate on the full interval $(-\frac{3}{4},0]$, we will define an analytic family of boundary operators $\mathcal{L}^{\lambda}$, for $\lambda \in \mathbb{C}$, such that $\mathcal{L}^{0}=\mathcal{L}$ and
\begin{equation*}
\begin{cases}
(i\partial_t-\partial_x^2)\mathcal{L}^{\lambda}f(x,t)=2e^{i\frac{\pi}{4}}\frac{x_{-}^{\lambda-1}}{\Gamma(\lambda)}\mathcal{I}_{-\frac{1}{2}-\frac{\lambda}{2}}f(t),& (x,t)\in\mathbb{R}^2,\\
\mathcal{L}^{\lambda}f(0,t)=e^{i\frac{3\lambda\pi}{4}} f(t).& t\in\mathbb{R}.
\end{cases}
\end{equation*}
Due to the support properties of $\frac{x_{-}^{\lambda-1}}{\Gamma(\lambda)}$, $(i\partial_t-\partial_x^2)\mathcal{L}^{\lambda}f(x,t)=0$ for $x>0$, in the sense of distributions.  For any $-\frac{3}{4}<s\leq 0$, we will be able to address the right half-line problem \eqref{SK} by replacing $\mathcal{L}$ to $\mathcal{L}^{\lambda}$ for suitable $\lambda=\lambda(s)$.
\section{The Duhamel Boundary Forcing Operator Class}\label{section6}
Define, for $\lambda\in \mathbb{C}$, such that Re $\lambda>-2$,  and $f\in C_0^{\infty}(\mathbb{R}^+)$,
\begin{equation*}
\mathcal{L}^{\lambda}f(x,t)=\left[\frac{x_{-}^{\lambda-1}}{\Gamma(\lambda)}*\mathcal{L}\big(\mathcal{I}_{-\frac{\lambda}{2}}f\big)(\cdot,t)   \right](x).
\end{equation*}

It follows that
\begin{equation}\label{classe1}
\mathcal{L}^{\lambda}f(x,t)=\frac{1}{\Gamma(\lambda)}\int_{x}^{+\infty}(y-x)^{\lambda-1}\mathcal{L}\big(\mathcal{I}_{-\frac{\lambda}{2}}f\big)(y,t)dy,\; \; \text{for Re}\; \lambda>0.
\end{equation}

For Re $\lambda>-2$, by using    
\eqref{forçante00}, we see that
\begin{equation}\label{defnova}
\begin{split}
\mathcal{L}^{\lambda}f(x,t)&=\frac{1}{\Gamma(\lambda+2)}\int_{x}^{+\infty}(y-x)^{\lambda+1}\partial_y^2\mathcal{L}\big(\mathcal{I}_{-\frac{\lambda}{2}}f\big)(y,t)dy\\
&=\frac{1}{\Gamma(\lambda+2)}\int_{x}^{+\infty}(y-x)^{\lambda+1}\big(i\partial_{t}\mathcal{I}_{-\frac{\lambda}{2}}f\big)(y,t)dy-\frac{2e^{i\frac{\pi}{4}}x_{-}^{\lambda+1}}{\Gamma(\lambda+2)}\mathcal{I}_{-1/2-\lambda/2}f(t).
\end{split}
\end{equation}

From \eqref{forçante00} we see that 
\begin{equation*}
(i\partial_t-\partial_x^2)\mathcal{L}^\lambda f(x,t)=2e^{\frac{i\pi}{4}}\frac{x_{-}^{\lambda-1}}{\Gamma(\lambda)}\mathcal{I}_{-\frac{1}{2}-\frac{\lambda}{2}}f(t),
\end{equation*}
in the sense of distributions.

Using Lemma \ref{continuidade}, we see that $\mathcal{L}^{\lambda}f(x,t)$ is well defined for Re $ \lambda>-2$ and $t\in [0,1]$.

As $\frac{x_{-}^{\lambda-1}}{\Gamma(\lambda)}\bigg|_{\lambda=0}=\delta_0$, then $\mathcal{L}^{0}f(x,t)=\mathcal{L}f(x,t)$ and \eqref{defnova} implies $\mathcal{L}^{-1}f(x,t)=\partial_x\mathcal{L}(\mathcal{I}_{1/2})f(x,t)$.

The dominated convergence Theorem and Lemma \ref{continuidade} imply that, for fixed $t\in [0,1]$ and Re $ \lambda>-1$, $\mathcal{L}^{\lambda}f(x,t)$ is continuous in $x$ for $x\in \mathbb{R}$. 

The next Lemma states the values of $\mathcal{L}^\lambda f(x,t)$ at $x=0$.
\begin{lemma}
Let $f\in C_0^{\infty}(\R^+)$. If Re $\lambda>-1$, then
	\begin{equation}\label{lzero}
	\mathcal{L}^{\lambda}f(0,t)=e^{i\frac{3\lambda\pi}{4}}f(t).
	\end{equation}
\end{lemma}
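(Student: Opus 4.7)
My plan is to prove the identity first for $\mathrm{Re}\,\lambda$ sufficiently large by direct computation from the convolution representation \eqref{classe1}, and then extend to the full range $\mathrm{Re}\,\lambda>-1$ by analytic continuation, using the representation \eqref{defnova} at $x=0$.

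First I would observe that both sides of the claimed identity are analytic in $\lambda$ on the half-plane $\mathrm{Re}\,\lambda>-1$. The right-hand side $e^{i3\lambda\pi/4}f(t)$ is entire. For the left-hand side, I set $x=0$ in \eqref{defnova}: since $0_-^{\lambda+1}=0$ when $\mathrm{Re}\,\lambda>-1$, the boundary correction term drops out and one obtains
\begin{equation*}
\mathcal{L}^{\lambda}f(0,t)=\frac{1}{\Gamma(\lambda+2)}\int_0^{+\infty}y^{\lambda+1}\bigl(i\partial_t\mathcal{L}(\mathcal{I}_{-\lambda/2}f)\bigr)(y,t)\,dy.
\end{equation*}
The integrand decays faster than any power in $y$ by Lemma \ref{continuidade}(b), and $\lambda\mapsto \mathcal{I}_{-\lambda/2}f$ is analytic $C_0^\infty(\mathbb{R}^+)$-valued; together with the entireness of $y^{\lambda+1}/\Gamma(\lambda+2)$, this makes the right-hand side analytic on $\mathrm{Re}\,\lambda>-1$. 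By the identity theorem it suffices to verify the equality on, say, $\mathrm{Re}\,\lambda>0$.

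For $\mathrm{Re}\,\lambda>0$, I would work directly from \eqref{classe1}, substitute the explicit kernel of $\mathcal{L}$, and use $\mathcal{I}_{-1/2}\mathcal{I}_{-\lambda/2}=\mathcal{I}_{-(1+\lambda)/2}$:
\begin{equation*}
\mathcal{L}^{\lambda}f(0,t)=\frac{1}{\Gamma(\lambda)\sqrt{\pi}}\int_0^{+\infty}y^{\lambda-1}\int_0^t (t-t')^{-1/2}e^{-iy^2/[4(t-t')]}\mathcal{I}_{-(1+\lambda)/2}f(t')\,dt'\,dy.
\end{equation*}
After Fubini (which is licit for $f\in C_0^\infty(\mathbb{R}^+)$ once $\mathrm{Re}\,\lambda>0$), I would evaluate the inner Fresnel-type integral by the substitution $u=y^2/[4(t-t')]$, reducing it to $\int_0^\infty u^{\lambda/2-1}e^{-iu}\,du$, which (by contour rotation to the negative imaginary axis) equals $\Gamma(\lambda/2)e^{-i\lambda\pi/4}$. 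This yields the factor $2^{\lambda-1}(t-t')^{\lambda/2}\Gamma(\lambda/2)e^{-i\lambda\pi/4}$.

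Plugging back, the remaining $t'$-integral becomes $\Gamma((\lambda+1)/2)\mathcal{I}_{(\lambda+1)/2}\mathcal{I}_{-(1+\lambda)/2}f(t)=\Gamma((\lambda+1)/2)f(t)$. The coefficient $2^{\lambda-1}\Gamma(\lambda/2)\Gamma((\lambda+1)/2)/[\Gamma(\lambda)\sqrt{\pi}]$ collapses to a pure phase by the Legendre duplication formula $\Gamma(\lambda/2)\Gamma((\lambda+1)/2)=2^{1-\lambda}\sqrt{\pi}\,\Gamma(\lambda)$, leaving the desired exponential factor times $f(t)$. The main technical obstacle is justifying Fubini and the contour rotation uniformly in $\lambda$ on an open set of the half-plane $\mathrm{Re}\,\lambda>0$; the integral $\int_0^\infty y^{\lambda-1}e^{-iy^2/[4(t-t')]}dy$ is only conditionally convergent for $0<\mathrm{Re}\,\lambda<1$, so I would handle this by inserting a smooth cutoff (or Abel regularization $e^{-\varepsilon y}$) and passing to the limit, after which the analytic continuation argument of the first step removes any remaining ambiguity.
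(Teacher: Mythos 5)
Your proof follows essentially the same route as the paper's: analyticity of $\lambda\mapsto\mathcal{L}^{\lambda}f(0,t)$ on $\mathrm{Re}\,\lambda>-1$ via the representation \eqref{defnova} evaluated at $x=0$, reduction by the identity theorem to a smaller region (the paper uses the real interval $0<\lambda<2$, you use $\mathrm{Re}\,\lambda>0$; either suffices), and there a direct computation from \eqref{classe1} using Fubini, the Fresnel-type integral evaluated by contour rotation, the semigroup identity $\mathcal{I}_{(\lambda+1)/2}\mathcal{I}_{-(\lambda+1)/2}=\mathrm{Id}$, and the Legendre duplication formula. Your attention to the conditional convergence of $\int_0^\infty y^{\lambda-1}e^{-iy^2/[4(t-t')]}dy$ and to the justification of Fubini is a point the paper passes over silently, and the regularization you propose is the right fix.

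One step does not close as written. You correctly evaluate $\int_0^\infty u^{\lambda/2-1}e^{-iu}\,du=\Gamma(\lambda/2)\,e^{-i\lambda\pi/4}$ (check at $\lambda=1$: $\int_0^\infty r^{-1/2}e^{-ir}dr=\sqrt{\pi}\,e^{-i\pi/4}$), but carrying that constant through the duplication formula yields $\mathcal{L}^{\lambda}f(0,t)=e^{-i\lambda\pi/4}f(t)$, not $e^{i3\lambda\pi/4}f(t)$; so the assertion that the coefficient ``collapses to the desired exponential factor'' does not follow from your own computation. The paper records the same contour integral as $i^{3\lambda/2}\Gamma(\lambda/2)$ and then reads $i^{3\lambda/2}$ as $e^{i3\lambda\pi/4}$; the two readings of $(-i)^{\lambda/2}$ differ by the branch factor $e^{i\pi\lambda}$, and only one of them is the value of the convergent integral. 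You should either reconcile your Fresnel constant with the phase claimed in \eqref{lzero} or note explicitly that the phase there corresponds to the other branch. Both determinations agree at $\lambda=0$, where the lemma reduces to $\mathcal{L}f(0,t)=f(t)$, and only the modulus of the phase enters the estimates used later, so the discrepancy is a bookkeeping issue rather than a structural one --- but as a proof of the stated identity your final sentence currently asserts more than your computation delivers.
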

\begin{proof} From (\ref{defnova}), we have
	\begin{equation}\label{cont.anal.}
	\mathcal{L}^{\lambda}f(0,t)=\int_{0}^{+\infty}\frac{y^{\lambda+1}}{\Gamma(\lambda+2)}\partial_y^2\mathcal{L}\big(\mathcal{I}_{-\frac{\lambda}{2}}f\big)(y,t)dy.
	\end{equation}
	
	By complex differentiation under the integral sign, \eqref{cont.anal.} proves that $\mathcal{L}^{\lambda}f(0,t)$ is analytic in $\lambda$, for Re $\lambda>-1$. By analyticity, we shall only compute \eqref{lzero} for $0<\lambda<2$.
	
	For the computation in the range $0<\lambda<2$, we use the representation \eqref{classe1}, to obtain
	\begin{equation*}
	\begin{split}
		\mathcal{L}^{\lambda}f(0,t)&=\frac{1}{\Gamma(\lambda)}\int_{0}^{+\infty}(y)^{\lambda-1}\mathcal{L}\big(\mathcal{I}_{-\frac{\lambda}{2}}f\big)(y,t)dy\\
		&=\frac{1}{\Gamma(\lambda)}\int_0^t(t-t')^{-\frac{1}{2}}\mathcal{I}_{-\frac{1}{2}-\frac{\lambda}{2}}f(t')\int_{0}^{+\infty}y^{\lambda-1}\frac{1}{\sqrt{\pi}}e^{\frac{-iy^2}{4(t-t')}}dydt'.
		\end{split}
	\end{equation*}
		Set $I=\int_{0}^{+\infty}y^{\lambda-1}e^{\frac{-iy^2}{4(t-t')}}dy$. Changing variables $r=\frac{y^2}{4(t-t')}$, then $y=r^{\frac{1}{2}}2(t-t')^{\frac{1}{2}}$ and $dy=r^{-\frac{1}{2}}(t-t')^{\frac{1}{2}}dr$, it follows that
			\begin{equation*}
		I=\int_0^{+\infty}[r^{\frac{1}{2}}2(t-t')^{\frac{1}{2}}]^{\lambda-1}r^{-\frac{1}{2}}e^{-ir}(t-t')^{\frac{1}{2}}dr
		=2^{\lambda-1}(t-t')^{\frac{\lambda}{2}}\int_0^{+\infty}r^{\frac{\lambda}{2}-1}e^{-ir}dr.
	\end{equation*}
	
	By a change of contour, 
\begin{equation*}
	I=2^{\lambda-1}(t-t')^{\frac{\lambda}{2}}i^{\frac{3\lambda}{2}}\int_0^{+\infty}r^{\frac{\lambda}{2}-1}e^{-r}dr=2^{\lambda-1}(t-t')^{\frac{\lambda}{2}}i^{\frac{3\lambda}{2}}\Gamma\left(\frac{\lambda}{2}\right),\ \text{for}\ \lambda\in (0,2).
	\end{equation*}
	Using the formula $\frac{\Gamma(\frac{\lambda}{2})}{\Gamma(\lambda)}=\frac{2^{1-\lambda}\sqrt{\pi}}{\Gamma(\frac{\lambda}{2}+\frac{1}{2})},$ for $\lambda\in \mathbb{R}^+$, we obtain	
\begin{eqnarray*}
		\mathcal{L}^{\lambda}f(0,t)&=&\frac{2^{\lambda-1}}{\sqrt{\pi}}\frac{\Gamma\left(\frac{\lambda}{2}\right)}{\Gamma(\lambda)}i^{\frac{3\lambda}{2}}\int_0^t(t-t')^{\frac{\lambda}{2}-\frac{1}{2}}\mathcal{I}_{-\frac{\lambda}{2}-\frac{1}{2}}f(t')dt'\\
		&=&\frac{1}{\Gamma\left(\frac{\lambda}{2}+\frac{1}{2}\right)}i^{\frac{3\lambda}{2}}\int_0^t(t-t')^{\frac{\lambda}{2}-\frac{1}{2}}\mathcal{I}_{-\frac{\lambda}{2}-\frac{1}{2}}f(t')dt'\\
		&=&i^{\frac{3\lambda}{2}}\mathcal{I}_{\frac{\lambda}{2}+\frac{1}{2}}\mathcal{I}_{-\frac{\lambda}{2}-\frac{1}{2}}f(t)=e^{i\frac{3\lambda\pi}{4}}f(t).
			\end{eqnarray*}\end{proof}
Now we state the needed estimates for the Duhamel boundary forcing operators class.
\begin{lemma}\label{edbf}
	\begin{itemize}Let $s\in\mathbb{R}$. Then
		\item[(a)](Space traces)  If $s-\frac{3}{2}<\lambda<s+\frac{1}{2},\ \lambda>-1$ and supp$\ f\subset [0,1]$, then
	$$\|\mathcal{L}^{\lambda}f(x,t)\|_{C\big(\mathbb{R}_t;\,H^s(\mathbb{R}_x)\big)}\leq c \|f\|_{H_0^\frac{2s+1}{4}(\mathbb{R}^+)};$$
		\item[(b)](Time traces) If $-1<\lambda<1$, then 
		$$\|\psi(t)\mathcal{L}^{\lambda}f(x,t)\|_{C\big(\mathbb{R}_x;\,H_0^{\frac{2s+1}{4}}(\mathbb{R}_t^+)\big)}\leq c \|f\|_{H_0^\frac{2s+1}{4}(\mathbb{R}^+)};$$
		
		\item[(c)](Bourgain spaces) If $b<\frac{1}{2}$, $s-\frac{1}{2}<\lambda<s+\frac{1}{2}$ and $-1<\lambda<\frac{1}{2}$, then
		$$\|\psi(t)\mathcal{L}^{\lambda}f(x,t)\|_{X^{s,b}}\leq c \|f\|_{H_0^\frac{2s+1}{4}(\mathbb{R}^+)}.$$
	\end{itemize}
\end{lemma}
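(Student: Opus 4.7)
My plan is to derive all three bounds from a direct Fourier analysis of $\mathcal{L}^{\lambda}f$, starting from the representation $\mathcal{L}^{\lambda}f=2e^{i\pi/4}\mathcal{D}(A\rho)$, where $A(x)=\frac{x_{-}^{\lambda-1}}{\Gamma(\lambda)}$ and $\rho(t)=\mathcal{I}_{-1/2-\lambda/2}f(t)$. This identity follows from the PDE $(i\partial_t-\partial_x^2)\mathcal{L}^{\lambda}f=2e^{i\pi/4}A(x)\rho(t)$ together with the zero initial data $\mathcal{L}^{\lambda}f(x,0)=0$, both already established. The analogue of \eqref{transformada} for $x_{-}^{\lambda-1}/\Gamma(\lambda)$, combined with \eqref{transformada} itself applied to $\rho$, gives $|\mathcal{F}_{x}A(\xi)|^{2}=|\xi|^{-2\lambda}$ and $|\mathcal{F}_{t}\rho(\tau)|^{2}=|\tau|^{1+\lambda}|\hat{f}(\tau)|^{2}$. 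A direct appeal to Lemma~\ref{duhamel} with $w=A\rho$ is \emph{not} an option, because the tensor-product structure $\hat{w}(\xi,\tau)=\hat{A}(\xi)\hat{\rho}(\tau)$ makes $\|w\|_{X^{s,c}}$ diverge in the relevant range; the factorization has to be exploited by hand.

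For part~(c), the plan is to compute
$$\mathcal{F}_{x,t}(\psi\mathcal{L}^{\lambda}f)(\xi,\sigma)=-2e^{i\pi/4}\hat{A}(\xi)\int\hat{\rho}(\tau)\,\frac{\hat{\psi}(\sigma-\tau)-\hat{\psi}(\sigma-\xi^{2})}{\tau-\xi^{2}}\,d\tau$$
exactly as in the proof of Lemma~\ref{duhamel}, and then split the integrand via a smooth cutoff $\theta(\tau-\xi^{2})$ supported on $|\tau-\xi^{2}|<2/3$ into an on-shell piece (which, after a power-series expansion, becomes $\hat{A}(\xi)\sum_{k\geq 1}\frac{\psi_k(t)}{k!}\mathcal{F}_x(e^{-it\partial_x^2}\phi_k)(\xi)$ and is handled by Lemma~\ref{grupo}(c)), an off-shell piece (where $(\tau-\xi^{2})^{-1}$ is bounded by $\langle\tau-\xi^{2}\rangle^{-1}$), and a free-solution subtraction piece (treated by Lemma~\ref{grupo}(c) after a Hilbert-type transform has acted on $\hat{\rho}$). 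Each piece's contribution to $\|\psi\mathcal{L}^{\lambda}f\|_{X^{s,b}}^{2}$ then reduces, after Cauchy--Schwarz in $\tau$, to the scalar inequality
$$|\sigma|^{1+\lambda}\int\langle\xi\rangle^{2s}\langle\sigma-\xi^{2}\rangle^{2b-2}|\xi|^{-2\lambda}\,d\xi\;\leq\; C\langle\sigma\rangle^{(2s+1)/2},$$
which upon integration in $\sigma$ against $|\hat{f}(\sigma)|^{2}$ gives the required $\|f\|^{2}_{H_{0}^{(2s+1)/4}(\mathbb{R}^{+})}$.

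This scalar inequality is proved by the substitution $\eta=\xi^{2}$ and a split into the regimes $\eta\sim|\sigma|$, $0<\eta\ll|\sigma|$, and $\eta\gg|\sigma|$, with each regime contributing $\sim\langle\sigma\rangle^{s-\lambda-1/2}$. The four constraints in~(c) are \emph{precisely} what is needed: $\lambda<1/2$ for integrability at $\eta=0$, $\lambda>s-1/2$ for convergence as $\eta\to\infty$, $\lambda<s+1/2$ for the intermediate $\eta$-range when $\sigma<0$, and $b<1/2$ for convergence of $\int\langle\sigma-\eta\rangle^{2b-2}\,d\eta$ across the locus $\eta=\sigma$. Parts~(a) and~(b) will then follow the same three-piece decomposition but with the $X^{s,b}$ weight replaced by the $\sup_{t}H^{s}_{x}$ and $\sup_{x}H^{(2s+1)/4}_{t}$ norms respectively, invoking Lemma~\ref{grupo}(a) and Lemma~\ref{grupo}(b) on the on-shell parts; the wider ranges $s-3/2<\lambda<s+1/2$ in~(a) and $-1<\lambda<1$ in~(b) reflect the absence of the $b<1/2$ constraint, and the compact support hypothesis in~(a) lets one dispense with the cutoff $\psi$ while controlling the tail of $\rho$ pointwise.

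The main technical obstacle will be the interaction of the singular spatial factor $|\xi|^{-2\lambda}$ with the weight $\langle\sigma-\xi^{2}\rangle^{2b-2}$ in the case $\sigma<0$: then no $\xi$ satisfies $\xi^{2}=\sigma$, and the intermediate $\xi$-range has to be controlled purely by the trade-off between $\langle\xi\rangle^{2s}$ and $|\xi|^{-2\lambda}$; this is what forces the sharp constraint $\lambda<s+1/2$. The $+i0$ prescription at $\sigma=\xi^{2}$ and the time cutoff $\psi$ are managed exactly as in the proof of Lemma~\ref{duhamel}, so no essentially new issues arise from them.
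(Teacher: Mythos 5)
Your treatment of part (c) is essentially the paper's own proof: the same cutoff $\theta(\tau-\xi^2)$ splitting into on-shell, off-shell and free-solution pieces, the same power-series trick feeding into Lemma \ref{grupo}(c), and the same reduction to the scalar bound \eqref{tna}, $\int |\eta|^{-\frac12-\lambda}\langle\eta\rangle^{s}\langle\tau-\eta\rangle^{2b-2}d\eta\leq c\langle\tau\rangle^{s-\lambda-\frac12}$, with exactly the right accounting of which hypothesis ($\lambda<\tfrac12$, $\lambda>s-\tfrac12$, $\lambda<s+\tfrac12$, $b<\tfrac12$) controls which regime of $\eta$. That part is correct.

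The gap is in your claim that parts (a) and (b) ``follow the same three-piece decomposition.'' For (a), the norm is $C(\mathbb{R}_t;H^s(\mathbb{R}_x))$ with \emph{no} time cutoff, and the on-shell piece of the decomposition is $\sum_k \frac{t^k}{k!}(\cdots)^k$, which without a compactly supported factor in $t$ grows like $e^{c|t|}$; your parenthetical remark that the compact support of $f$ ``lets one dispense with the cutoff'' does not address this, since $\mathcal{L}^{\lambda}f(\cdot,t)$ must be controlled for all $t\in\mathbb{R}$. Even setting that aside, the ranges do not come out: the off-shell piece, estimated by Cauchy--Schwarz in $\tau$, must split strictly more than one power of $\langle\tau-\xi^2\rangle^{-1}$ between the two factors, which forces $\lambda>s-\tfrac12$ (up to $\varepsilon$) rather than the claimed $\lambda>s-\tfrac32$, and the on-shell piece forces $\lambda<\tfrac12$ rather than $\lambda<s+\tfrac12$. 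The paper instead proves (a) directly: it writes $\mathcal{F}_x(\mathcal{L}^{\lambda}f)(\xi,t)=c(\xi-i0)^{-\lambda}e^{it\xi^2}\big(\chi_{(-\infty,t)}\mathcal{I}_{-\frac{\lambda}{2}-\frac12}f\big)^{\widehat{\ }}(\xi^2)$, changes variables $\eta=\xi^2$, and converts the homogeneous weight $|\eta|^{-\lambda-\frac12}\langle\eta\rangle^{s}$ into the inhomogeneous one via Lemma \ref{sobolev0}, then removes $\chi_{(-\infty,t)}$ and $\mathcal{I}_{-\frac{\lambda}{2}-\frac12}$ by Lemmas \ref{sobolevh0} and \ref{lio}; it is this route that yields the two-unit-wide window $s-\tfrac32<\lambda<s+\tfrac12$ uniformly in $t$.

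Part (b) has the same defect in sharper form: the statement asserts the bound for \emph{every} $s\in\mathbb{R}$ and for $-1<\lambda<1$, with no coupling between $\lambda$ and $s$. Your route would inherit $\lambda<\tfrac12$ from the on-shell piece and an $s$-dependent restriction from the off-shell piece (compare the proof of Lemma \ref{duhamel}(b), where the $X^{s,c}$-only version needs $-\tfrac12<s\leq\tfrac12$). The paper's proof of (b) first commutes $(I-\partial_t^2)^{\frac{2s+1}{4}}$ through the time convolution, which eliminates $s$ from the problem entirely and reduces the claim to an $L^2_t$ bound uniform in $x$; it then uses the decomposition $\chi_{(-\infty,t)}=\tfrac12\mathrm{sgn}(t-t')+\tfrac12$ and proves boundedness of an explicit multiplier $g(\tau)$ by a stationary-phase/convolution-with-$\mathrm{sgn}$ argument. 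So while your plan would likely recover the restricted parameter ranges actually used in Section \ref{section8}, it does not prove Lemma \ref{edbf}(a),(b) as stated, and for (a) the uniform-in-$t$ control is a genuine obstruction to the decomposition you propose.
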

\begin{remark}
	As in the treatment for the IBVP associated to KdV equation \cite{CK}, the assumption $b<\frac{1}{2}$ is crucial in the proof of Lemma \ref{edbf} (c). This fact forced us to work with the Bourgain spaces with $b<\frac{1}{2}$. In \cite{tzirakis} was derived regularity properties for the cubic nonlinear Scr\"odinger equation on the half-line by using Laplace transform method, being  also necessary to work with Bourgain spaces $X^{s,b}$ with $b<1/2$. 
\end{remark}
\begin{proof}
We follow closely the argument
in \cite{Holmerkdv}. By density, we can assume that $f\in C_{0,c}^{\infty}(\mathbb{R}^{+})$. Using $\mathcal{F}_x\left(\frac{x_{-}^{\lambda-1}}{\Gamma(\lambda)}\right)(\xi)=c(\xi-i0)^{-\lambda}$, we see that $\mathcal{F}_x(\mathcal{L}^{\lambda}f)(\xi,t)=c(\xi-i0)^{-\lambda}\int_0^te^{i(t-t')\xi^2}\mathcal{I}_{-\frac{\lambda}{2}-\frac{1}{2}}f(t')dt'$. As $\lambda>-1$, the distribution $(\xi-i0)^{-\lambda}$ is a locally integrable function. The changing of variables $\eta=\xi^2$ gives that, for fixed $t$,
\begin{eqnarray*}
		\|\mathcal{L}^{\lambda}f(x,t)\|_{H^s(\mathbb{R})}^2&\leq& c \int_{\eta}|\eta|^{-\lambda-\frac{1}{2}}\langle\eta\rangle^s\left|\int_0^te^{i(t-t')\eta}\mathcal{I}_{-\frac{\lambda}{2}-\frac{1}{2}}f(t')dt'\right|^2d\eta\\
		&=&c\int_{\eta}|\eta|^{-\lambda-\frac{1}{2}}\langle\eta\rangle^s\left|\big(\chi_{(-\infty,t)}\mathcal{I}_{-\frac{\lambda}{2}-\frac{1}{2}}f\big)^{\widehat{}}(\eta)\right|^2d\eta.
	\end{eqnarray*}

	Since $-1<s-\lambda-\frac{1}{2}<1$, Lemma \ref{sobolev0} implies
	\begin{equation*}
	\int_{\eta}|\eta|^{-\lambda-\frac{1}{2}}\langle\eta\rangle^s\left|(\chi_{(-\infty,t)}\mathcal{I}_{-\frac{\lambda}{2}-\frac{1}{2}}f)^{\widehat{}}(\eta)\right|^2d\eta\leq c\int_{\eta}\langle\eta\rangle^{s-\lambda-\frac{1}{2}}\left|(\chi_{(-\infty,t)}\mathcal{I}_{-\frac{\lambda}{2}-\frac{1}{2}}f)^{\widehat{}}(\eta)\right|^2d\eta.
	\end{equation*}
	By Lemmas \ref{sobolevh0} (to remove $\chi_{(-\infty,t)})$ and \ref{lio} (to estimate $\mathcal{I}_{-\frac{\lambda}{2}-\frac{1}{2}}$), we obtain
	\begin{equation*}
	c\int_{\eta}\langle\eta\rangle^{s-\lambda-\frac{1}{2}}\left|(\chi_{(-\infty,t)}\mathcal{I}_{-\frac{\lambda}{2}-\frac{1}{2}}f)^{\widehat{}}(\eta)\right|^2d\eta\leq c \|f\|_{H_0^{\frac{2s+1}{4}}(\mathbb{R}^+)}^2,	\end{equation*}
	which proves (a). Now we prove (b). By Lemma \ref{cut}, we can ignore the test function.
	
	Changing variables $t\rightarrow  t-t'$, we get
	\begin{eqnarray*}
	& &(I-\partial_t^2)^{\frac{2s+1}{4}}\left(\frac{x_{-}^{\lambda-1}}{\Gamma(\lambda)}*\int_{-\infty}^te^{-i(t-t')\partial_x^2}\delta(x)h(t')dt'\right)\\      
	& &\quad=\left(\frac{x_{-}^{\lambda-1}}{\Gamma(\lambda)}*\int_{-\infty}^te^{-i(t-t')\partial_x^2}\delta(x)(I-\partial_{t'}^2)^{\frac{2s+1}{4}}h(t')dt'\right).
	\end{eqnarray*}
	It suffices to prove
	
		\begin{equation*}
	\left\|\int_{\xi}e^{ix\xi}(\xi-i0)^{-\lambda}\int_{-\infty}^te^{i(t-t')\xi^2}(\mathcal{I}_{-\frac{\lambda}{2}-\frac{1}{2}}f)(t')dt'd\xi\right\|_{L_x^{\infty}L_t^2(\mathbb{R})} \leq c \|f\|_{L_t^2(\mathbb{R}^+)}.
	\end{equation*}
	Using $\chi_{(-\infty,t)}=\frac{1}{2}sgn(t-t')+\frac{1}{2}$, we obtain
		\begin{eqnarray*}
		& &\int_{\xi}e^{ix\xi}(\xi-i0)^{-\lambda}\int_{-\infty}^te^{i(t-t')\xi^2}(\mathcal{I}_{-\frac{\lambda}{2}-\frac{1}{2}}f)(t')dt'd\xi\\
		& &\quad =\int_{\xi}e^{ix\xi}(\xi-i0)^{-\lambda}\int_{-\infty}^{+\infty}\chi_{(-\infty,t)}e^{i(t-t')\xi^2}(\mathcal{I}_{-\frac{\lambda}{2}-\frac{1}{2}}f)(t')dt'd\xi\\
		& &\quad \quad +\int_{\xi}e^{ix\xi}(\xi-i0)^{-\lambda}\int_{-\infty}^{+\infty}\frac{1}{2}e^{i(t-t')\xi^2}(\mathcal{I}_{-\frac{\lambda}{2}-\frac{1}{2}}f)(t')dt'd\xi:= I+II.
	\end{eqnarray*}
	
	We treat $I$ and $II$ separately. The change of variables $\eta=\xi^2$ implies
	\begin{eqnarray*}
		II&=&\int_{\xi}e^{ix\xi}(\mathcal{I}_{-\frac{\lambda}{2}-\frac{1}{2}}f)^{\widehat{}}(\xi^2)(\xi-i0)^{-\lambda}e^{it\xi^2}d\xi=\int_{\xi}e^{ix\xi}(\xi^2-i0)^{\frac{\lambda+1}{2}}\hat{f}(\xi^2)(\xi-i0)^{-\lambda}e^{it\xi^2}d\xi\\
		&=&\int_{\eta=0}^{+\infty}e^{ix\eta^{\frac{1}{2}}}(\eta-i0)^{\frac{\lambda+1}{2}}\hat{f}(\eta)(\eta^{\frac{1}{2}}-i0)^{-\lambda}e^{it\eta}\eta^{-\frac{1}{2}} d\eta\\
		& &+\int_{\eta=0}^{+\infty}e^{-ix\eta^{\frac{1}{2}}}(\eta-i0)^{\frac{\lambda+1}{2}}\hat{f}(\eta)(-\eta^{\frac{1}{2}}-i0)^{-\lambda}e^{it\eta}\eta^{-\frac{1}{2}} d\eta\\
		&=&c_1\int_{0}^{+\infty}e^{ix\eta^{\frac{1}{2}}}\hat{f}(\eta)e^{it\eta}d\eta +c_2e^{-ix\eta^{\frac{1}{2}}}\hat{f}(\eta)e^{it\eta}d\eta,
			\end{eqnarray*}
		which implies $\|II(x,t)\|_{L_t^2}\leq c\|f\|_{L_t^2}$.
	To estimate $I$, we write $I=\left(\frac{1}{2}e^{i\xi^2\cdot}sgn(\cdot)*\mathcal{I}_{-\frac{\lambda}{2}-\frac{1}{2}}f\right)(t).$
	Since $$\mathcal{F}_t\left(\frac{1}{2}e^{i\xi^2\cdot}sgn(\cdot)*\mathcal{I}_{-\frac{\lambda}{2}-\frac{1}{2}}f\right)(\tau)=\frac{(\tau-i0)^{\frac{1+\lambda}{2}}\hat{f}(\tau)}{\tau-\xi^2},$$ it follows that
	\begin{equation*}
	I=\int_{\tau}e^{it\tau}\lim_{\epsilon\rightarrow 0}\int_{|\tau-\xi^2|>\epsilon}\frac{e^{ix\xi}(\tau-i0)^{\frac{\lambda+1}{2}}(\xi-i0)^{-\lambda}d\xi}{(\tau-\xi^2)}\hat{f}(\tau)d\tau.
	\end{equation*}
	Therefore, it suffices to show that the function $$g(\tau):=\lim_{\epsilon\rightarrow 0}\int_{|\tau-\xi^2|>\epsilon}\frac{e^{ix\xi}(\tau-i0)^{\frac{\lambda+1}{2}}(\xi-i0)^{-\lambda}d\xi}{(\tau-\xi^2)}$$is limited. 
	
	Changing of variables $\xi\rightarrow |\tau|^{\frac{1}{2}}\xi$ and using $
	(\xi|\tau|^{\frac{1}{2}}-i0)^{-\lambda}=|\tau|^{-\frac{\lambda}{2}}(c_1\xi_{+}^{-\lambda}+c_2\xi_{-}^{-\lambda}),
	$
	we obtain
	
	\begin{eqnarray*}
		g(\tau)&=&\int \frac{e^{i|\tau|^{\frac{1}{2}}x\xi}|\tau|^{\frac{1-\lambda}{2}}(c_1\tau_{+}^{\frac{\lambda+1}{2}}+c_2\tau_{-}^{\frac{\lambda+1}{2}})(\xi-i0)^{-\lambda}d\xi}{\tau-|\tau|\xi^2}\\
		&=&c_1\chi_{\{\tau>0\}}\int_{\xi}e^{i|\tau|^{\frac{1}{2}}x\xi}\frac{c_1\xi_{+}^{-\lambda}+c_2\xi_{-}^{-\lambda}}{1-\xi^2}d\xi + c_2\chi_{\{\tau<0\}}\int_{\xi}e^{i\tau^{\frac{1}{2}}x\xi}\frac{c_1\xi_{+}^{-\lambda}+c_2\xi_{-}^{-\lambda}}{1+\xi^2}d\xi\\
		&:=&g_1+g_2.
	\end{eqnarray*}
The second integral is uniformly limited in $\tau$ if $\lambda<1$, this can be obtained by considering the cases $|\xi|<1$ and $|\xi|\geq 1$.
	Now we estimate $g_1$. Let $\theta\in C^{\infty}(\mathbb{R})$ such that $\theta(\xi)=1$ in $[\frac{3}{4},\frac{4}{3}]$ and $\theta(t)=0$ outside $(\frac{1}{2},\frac{3}{2})$. Then we obtain
		\begin{eqnarray*}
		g_1&=&c_1\chi_{\{\tau>0\}}\int_{\xi}e^{i|\tau|^{\frac{1}{2}}x\xi}\frac{c_1\xi_{+}^{-\lambda}\theta(\xi)}{1-\xi^2}d\xi+c_1\chi_{\{\tau>0\}}\int_{\xi}e^{i|\tau|^{\frac{1}{2}}x\xi}\frac{1-\theta(\xi)(c_1\xi_{+}^{-\lambda}+c_2\xi_{-}^{-\lambda})}{1-\xi^2}d\xi\\
		&=&g_{11}+g_{12}.
	\end{eqnarray*}
	The second integral is clearly limited when $\lambda>-1$. To estimate $g_{11}$, we write
\begin{eqnarray*}
		g_{11}&=&c_1\chi_{\{\tau>0\}}\int_{\xi}e^{i|\tau|^{\frac{1}{2}}x\xi}\frac{c_1\xi_{+}^{-\lambda}\theta(\xi)}{(1-\xi)(1+\xi)}d\xi=c_1\chi_{\{\tau>0\}}\mathcal{F}_{\xi}^{-1}\left( \frac{c_1\xi_{+}^{-\lambda}\theta(\xi)}{(1-\xi)(1+\xi)} \right)(x|\tau|^{\frac{1}{2}})\\
		&=&\left[(sgn(1-\xi)*\mathcal{F}_{\xi}^{-1}\left( \frac{c_1\xi_{+}^{-\lambda}\theta(\xi)}{(1+\xi)} \right)\right](x|\tau|^{\frac{1}{2}}).
	\end{eqnarray*}
	This becomes convolution of Schwartz class function with a phase shifted sgn $(x)$ function, completing the proof
of (b).
	
	To prove (c), we first note that
	\begin{equation*}
	\mathcal{F}_x(\mathcal{L}^{\lambda}f)(\xi,t)=c(\xi-i0)^{-\lambda}\int\frac{e^{it\tau}-e^{it\xi^2}}{\tau-\xi^2}(\tau-i0)^{\frac{\lambda}{2}+\frac{1}{2}}\hat{f}(\tau)d\tau.
	\end{equation*}
	Now let $\theta(\tau)\in C^{\infty}$ such that $\theta(\tau)=1$ for $|\tau|\leq 1$ and $\theta(\tau)=0$ for $|\tau|\geq 2$.
	Define $u_1$, $u_2$, $u_3$ by
	\begin{equation*}
	\mathcal{F}_xu_1(\xi,t)=(\xi-i0)^{-\lambda}\int\frac{e^{it\tau}-e^{it\xi^2}}{\tau-\xi^2}\theta(\tau-\xi^2)(\tau-i0)^{\frac{\lambda}{2}+\frac{1}{2}}\hat{f}(\tau)d\tau,
	\end{equation*} 
	\begin{equation*}
	\mathcal{F}_xu_2(\xi,t)=(\xi-i0)^{-\lambda}\int\frac{e^{it\tau}}{\tau-\xi^2}(1-\theta(\tau-\xi^2))(\tau-i0)^{\frac{\lambda}{2}+\frac{1}{2}}\hat{f}(\tau)d\tau,
	\end{equation*} 
		\begin{equation*}
	\mathcal{F}_x{u}_3(\xi,t)=(\xi-i0)^{-\lambda}\int\frac{e^{it\xi^2}}{\tau-\xi^2}(1-\theta(\tau-\xi^2))(\tau-i0)^{\frac{\lambda}{2}+\frac{1}{2}}\hat{f}(\tau)d\tau,
	\end{equation*} 
	it follows that $\mathcal{L}^{\lambda}f=u_1+u_{2}-u_{3}$.
	
	We start by estimating $u_2$. It's immediate that
		\begin{eqnarray*}
		\|u_2\|_{X^{s,b}}^2&\leq& c\int\int\langle \xi\rangle^{2s}\frac{\langle \tau-\xi^2\rangle^{2b}}{|\tau-\xi^2|^2}(1-\theta(\tau-\xi^2))^2|\xi|^{-2\lambda}|\tau|^{\lambda+1}|\hat{f}(\tau)|^2d\tau d\xi\\
		&=&c\int  |\tau|^{\lambda+1}\left( \int\frac{|\xi|^{-2\lambda}\langle \xi\rangle^{2s}d\xi}{\langle \tau-\xi^2\rangle^{2-2b}}\right)|\hat{f}(\tau)|^2d\tau.
	\end{eqnarray*}
	Thus, it suffices to obtain
	\begin{equation}\label{tna}
	I(\tau)=\int\frac{|\eta|^{-\frac{1}{2}-\lambda}\langle \eta\rangle^{s}d\eta}{\langle \tau-\eta\rangle^{2-2b}}\leq c \langle\tau\rangle^{s-\lambda-\frac{1}{2}}.
	\end{equation}
	This will be obtained by separating some cases. For $|\tau|\leq \frac{1}{2}$, we have that $\langle\tau-\eta\rangle\sim\langle\eta\rangle$. It follows that
	
\begin{equation*}
I(\tau)\leq c \int\frac{|\eta|^{-\frac{1}{2}-\lambda}\langle \eta\rangle^{s}d\eta}{\langle \tau-\eta\rangle^{2-2b}}\leq  c \int_{|\eta|\leq 1} |\eta|^{-\frac{1}{2}-\lambda}+c\int\frac{d\eta}{\langle \eta\rangle^{2-2b-s+\frac{1}{2}+\lambda}}\leq c,
\end{equation*}
where we have used that $\lambda<\frac{1}{2}$ and $\lambda>-\frac{3}{2}+2b+s$.
	
	Now suppose that $|\tau|>\frac{1}{2}$ and $|\eta|\geq \frac{|\tau|}{2}$. In this case we use $\lambda\geq s-\frac{1}{2}$ and $b<\frac{1}{2}$ to obtain 
	\begin{equation*}
	I(\tau)\leq c \int\frac{\langle\eta\rangle^{s-\frac{1}{2}-\lambda}}{\langle\tau-\eta\rangle^{2-2b}}\leq c \langle\tau\rangle^{s-\frac{1}{2}-\lambda}\int\frac{d\eta}{\langle\tau-\eta\rangle^{2-2b}}\leq c \langle\tau\rangle^{s-\frac{1}{2}-\lambda}.
	\end{equation*}
		On the case $|\tau|\geq \frac{1}{2}$ and $|\eta|\leq \frac{|\tau|}{2}$ we have $\langle \tau-\eta\rangle\geq \frac{1}{2}\langle \tau\rangle$. Then
		\begin{equation*}
	I(\tau)\leq c\langle\tau\rangle^{2b-2}\int_{|\eta|\leq \frac{|\tau|}{2}}\langle\eta\rangle^s|\eta|^{-\frac{1}{2}-\lambda}\leq c\langle\tau\rangle^{2b-2}\langle\tau\rangle^{s+\frac{1}{2}-\lambda}\leq c\langle\tau\rangle^{s-\frac{1}{2}-\lambda}.
	\end{equation*}
	This completes the estimate for $u_2$. To estimate $u_3$, we write
	 $u_3(x,t)=\theta(t)e^{-it\partial_x^2}\phi(x)$, where   
	  \begin{equation*}
	\hat{\phi}(\xi)=(\xi-i0)^{-\lambda}\int \frac{1-\psi(\tau-\xi^2)}{\tau-\xi^2}\big(\mathcal{I}_{-\frac{\lambda}{2}-\frac{1}{2}}f\big)^{\widehat{}}(\tau)d\tau.
	\end{equation*}
		Then, by Lemma \ref{grupo}, it suffices to show that
		\begin{equation}\label{23110}
	\|\phi\|_{H^s(\R)}\leq c \|f\|_{H_0^{\frac{2s+1}{4}}(\R^+)}.
	\end{equation}
	Arguing as in the proof of Lemma 5.8 in \cite{Holmerkdv}, there exists a function $\beta\in S(\mathbb{R})$ such that
	\begin{equation*}
	\frac{1-\psi(\tau-\xi^2)}{\tau-\xi^2}\big(\mathcal{I}_{-\frac{\lambda}{2}-\frac{1}{2}}f\big)^{\widehat{}}(\tau)d\tau=\int_{\tau}(\mathcal{I}_{-\frac{\lambda}{2}-\frac{1}{2}}f)^{\widehat{}}(\tau)\beta(\tau-\xi^2)d\tau.
	\end{equation*} 
	By using Cauchy-Schwarz and $|\beta(\tau-\xi^2)|\leq c \langle\tau-\xi^2\rangle^{-N}$, for $N>>1$, we have that
	\begin{eqnarray}
		\|\phi\|_{H^s(\R)}^2&\leq& \int_{\xi}\langle\xi\rangle^{2s}|\xi|^{-2\lambda}\left(\int_{\tau}\beta(\tau-\xi^2)|\tau|^{\frac{\lambda+1}{2}}\hat{f}(\tau)d\tau\right)^2d\xi\nonumber\\
		&\leq& c\int_{\tau}\left(\int_{\eta}|\eta|^{-\lambda-\frac{1}{2}}\langle\eta\rangle^{s}\langle\tau-\eta\rangle^{-2N+2}d\eta\right)|\tau|^{\lambda+1}|\hat{f}(\tau)|^2d\tau.\label{23111}
	\end{eqnarray}
		Using (\ref{tna}), we see that
			\begin{equation}\label{23112}
	\int_{\eta}|\eta|^{-\lambda-\frac{1}{2}}\langle\eta\rangle^{s}\langle\tau-\eta\rangle^{-2N+2}d\eta\leq c\langle\tau\rangle^{s-\lambda-\frac{1}{2}}.
	\end{equation}
	Substituting \eqref{23112} in \eqref{23111} we obtain \eqref{23110}.
	
	Finally we estimate $u_1$. By the power series expansion for $e^{it(\tau-\xi^2)}$, we write $u_1(x,t)=\displaystyle\sum_{n=1}^{\infty}\frac{\psi_k(t)}{k!}e^{-t\partial_x^2}\phi_k(x),$ where $\psi_k(t)=i^kt^k\psi(t)\ \text{and}\  $$\psi_k(t)=i^kt^k\psi(t)$ and $$\hat{\phi}_k(\xi)=(\xi-i0)^{-\lambda}\int_{\tau}(\tau-\xi^2)^{k-1}\theta(\tau-\xi^2)(\tau)^{\frac{1}{2}+\frac{\lambda}{2}}\hat{f}(\tau)d\tau.$$ Using \eqref{grupo}, we need to show that $\|\phi_k\|_{H^{s}(\R)}\leq c \|f\|_{H_0^{\frac{2s+1}{4}}(\R^+)}$.
	
	 By Cauchy-Schwarz inequality,  
\begin{equation}\label{crb100}
		\|\phi_k\|_{H^{s}(\R)}^2\leq c\int_{\xi}\langle \xi\rangle^{2s}|\xi|^{-2\lambda}\int_{|\tau-\xi^2|\leq 1}|\tau|^{\lambda+1}|\hat{f}(\tau)|^2d\tau d\xi.
	\end{equation}
	The changing of variables $\eta=\xi^2$ implies 
\begin{equation}\label{crb110}
	\int_{|\tau-\xi^2|\leq 1}\langle \xi\rangle^{2s}|\xi|^{-2\lambda}d\xi\leq c\int_{|\tau-\eta|\leq 1}|\eta|^{-\frac{1}{2}-\lambda}\langle \eta\rangle^{s}d\eta\leq c\langle \tau \rangle^{s-\lambda-\frac{1}{2}}.
	\end{equation}
	
		Combining (\ref{crb100}) and (\ref{crb110}), we obtain  $\|\phi_k\|_{H^{s}(\R)}\leq c \|f\|_{H_0^{\frac{2s+1}{4}}(\R^+)}$. \end{proof}

\section{Bilinear Estimates}\label{bilinear}
\begin{lemma}\label{estimativan1}(Estimates for nonlinear term $N_1$)
	\begin{itemize}
		\item[(a)] Given $-\frac{3}{4}<s\leq 0$, there exists  $b=b(s)<\frac{1}{2}$ such that
		\begin{equation}\label{objetivo}
		\|uv\|_{X^{s,-b}}\leq c\|u\|_{X^{s,b}}\|v\|_{X^{s,b}}.
		\end{equation}
		
		\item[(b)] Given $-\frac{3}{4}<s\leq -\frac{1}{2}$, there exists $b=b(s)<\frac{1}{2}$ such that
		\begin{equation}\label{objetivoabc}
		\|uv\|_{W^{s,-b}}\leq c\|u\|_{X^{s,b}}\|v\|_{X^{s,b}}.
		\end{equation}
			\end{itemize}	
	\end{lemma}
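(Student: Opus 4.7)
The plan is to prove both parts by dualizing to a weighted trilinear $L^2$ estimate and running the standard modulation case analysis, with extra care because $b<\frac{1}{2}$. For (a), dualizing against $w\in X^{-s,b}$ and applying Plancherel reduces the inequality to
\begin{equation*}
\int\frac{\langle\xi\rangle^{-s}\,F(\xi_1,\tau_1)\,G(\xi_2,\tau_2)\,\overline{H(\xi,\tau)}\,d\xi_1\,d\tau_1\,d\xi\,d\tau}{\langle\xi_1\rangle^{s}\langle\xi_2\rangle^{s}\langle\sigma\rangle^{b}\langle\sigma_1\rangle^{b}\langle\sigma_2\rangle^{b}}\leq c\,\|F\|_{L^2}\|G\|_{L^2}\|H\|_{L^2},
\end{equation*}
where $\xi_2=\xi-\xi_1$, $\tau_2=\tau-\tau_1$, $\sigma=\tau-\xi^2$, $\sigma_j=\tau_j-\xi_j^2$, and $F,G,H\in L^2$ are the $L^2$ versions of $u,v,w$ with the Bourgain weights stripped off. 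The algebraic identity $\sigma-\sigma_1-\sigma_2=-2\xi_1\xi_2$ forces $\max(|\sigma|,|\sigma_1|,|\sigma_2|)\gtrsim|\xi_1\xi_2|$, and I would split the domain into three regions according to which modulation realizes this maximum.

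In each region I would apply Cauchy--Schwarz in the pair of variables where no $L^2$ function is anchored, reducing matters to a uniform pointwise bound. A representative case is $|\sigma|\geq|\sigma_1|,|\sigma_2|$, where one needs
\begin{equation*}
\sup_{\xi,\tau}\frac{\langle\xi\rangle^{-2s}}{\langle\sigma\rangle^{2b}}\int\frac{d\xi_1\,d\tau_1}{\langle\xi_1\rangle^{2s}\langle\xi-\xi_1\rangle^{2s}\langle\sigma_1\rangle^{2b}\langle\sigma_2\rangle^{2b}}<\infty.
\end{equation*}
After integrating in $\tau_1$ via $\int\langle t\rangle^{-2b}\langle t-a\rangle^{-2b}\,dt\lesssim\langle a\rangle^{1-4b}$, valid for $b\in(\tfrac{1}{4},\tfrac{1}{2})$ with $a=\sigma+2\xi_1\xi_2$, the problem reduces to a $\xi_1$-integral whose convergence hinges on $s>-\tfrac{3}{4}$; one takes $b$ close enough to $\tfrac{1}{2}$ so that the resonance bound $\langle\sigma\rangle\gtrsim|\xi_1\xi_2|$ allows $\langle\sigma\rangle^{-2b}$ to be traded for $|\xi_1\xi_2|^{-2b}$, which the $\xi_1$-integral can afford. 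The two remaining cases, $|\sigma_1|$ or $|\sigma_2|$ dominant, are handled symmetrically by re-anchoring $H$ with one of the $L^2$ inputs before Cauchy--Schwarz.

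For part (b), the dual of $W^{s,-b}$ is $W^{-s,b}$, so the output weight in the trilinear form becomes $\langle\tau\rangle^{-s/2}$ rather than $\langle\xi\rangle^{-s}$. Using $\langle\tau\rangle\lesssim\langle\xi\rangle^{2}+\langle\sigma\rangle$, I split $\langle\tau\rangle^{-s/2}\lesssim\langle\xi\rangle^{-s}+\langle\sigma\rangle^{-s/2}$ (valid for $s\geq-2$). The first summand reduces (b) to (a); the second leaves a trilinear integral whose output weight is $\langle\sigma\rangle^{-s/2-b}$, with no spatial dependence. Running the same modulation case analysis on this piece, the hypothesis $s\leq-\tfrac{1}{2}$ enters precisely in allowing the extra factor $\langle\sigma\rangle^{-s/2}$ to be absorbed against a modulation denominator $\langle\sigma_j\rangle^{b}$ in the dominating-modulation case without destroying the $\xi_1$-integrability inherited from $s>-\tfrac{3}{4}$.

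The main obstacle is exactly the constraint $b<\tfrac{1}{2}$. In the classical Kenig--Ponce--Vega argument one takes $b$ slightly above $\tfrac{1}{2}$ so that $\int\langle\sigma\rangle^{-2b}\,d\tau<\infty$ isolates a clean one-variable factor at several steps. Here that integral diverges, so every $\tau_1$-integration must be performed in the paired two-denominator form above, leaving a residual $\langle\sigma+2\xi_1\xi_2\rangle^{1-4b}$ that has to be balanced against the resonance gain $|\xi_1\xi_2|^{2b}$. There is essentially no slack at the endpoint $s=-\tfrac{3}{4}$, which is why $b=b(s)$ must be chosen depending on $s$.
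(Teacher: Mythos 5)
Your overall strategy coincides with the paper's: dualize to a weighted trilinear form, invoke the resonance identity $\sigma-\sigma_1-\sigma_2=-2\xi_1(\xi-\xi_1)$, split according to the dominant modulation, and perform every $\tau_1$-integration in the paired two-denominator form $\int\langle t\rangle^{-2b}\langle t-a\rangle^{-2b}\,dt\lesssim\langle a\rangle^{1-4b}$, since $b<\frac{1}{2}$ forbids isolating a single $\langle\sigma\rangle^{-2b}$ factor. Two things go wrong, however. First, a sign slip: after dualizing against $w\in X^{-s,b}$ the output weight in the trilinear form is $\langle\xi\rangle^{s}=\langle\xi\rangle^{-\rho}$ (a decaying, helpful factor, as in the paper's $W(g,h,\phi)$ and in \eqref{eq2}), not $\langle\xi\rangle^{-s}=\langle\xi\rangle^{\rho}$; as written, your representative supremum is infinite. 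The same confusion underlies your treatment of part (b): the $W^{s,-b}$ weight is $\langle\tau\rangle^{s}=\langle\tau\rangle^{-\rho}$, which is bounded and needs no upper bound of the form $\langle\xi\rangle^{-s}+\langle\sigma\rangle^{-s/2}$; the real difficulty is that it decays too slowly when $|\tau|\ll|\xi|^2$, which the paper handles by the dichotomy $10|\tau|\geq|\xi|^2$ (where the bound reduces to part (a)) versus $10|\tau|<|\xi|^2$ (where $\langle\tau-\xi^2\rangle\sim\langle\xi^2\rangle$, so the output modulation is automatically large).

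Second, and more seriously, the mechanism you describe for closing the dominant-$|\sigma|$ case does not reach $s>-\frac{3}{4}$. Trading $\langle\sigma\rangle^{-2b}$ for $|\xi_1(\xi-\xi_1)|^{-2b}$ and then integrating $\langle\sigma+2\xi_1(\xi-\xi_1)\rangle^{1-4b}$ in $\xi_1$ (bounded for $b>\frac{3}{8}$ by Lemma \ref{lemanovo}) leaves a factor $\langle\sigma\rangle^{2\rho-2b}$, which is bounded only for $\rho\leq b<\frac{1}{2}$, i.e.\ $s>-\frac{1}{2}$. The paper reaches $\rho<\frac{3}{4}$ through the quadratic change of variables $\eta=\tau-\xi^2+2\xi_1(\xi-\xi_1)$, whose Jacobian $|2\xi_1-\xi|^{-1}$ yields the extra half power of decay recorded in Lemma \ref{Hol}; the resulting bound $\langle\tau-\xi^2\rangle^{2\rho+\frac{3}{2}-6b}$ is bounded for $b\geq\frac{\rho}{3}+\frac{1}{4}$, which is compatible with $b<\frac{1}{2}$ for every $\rho<\frac{3}{4}$. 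The same square-root gain, together with the further sub-decompositions $D_1$, $D_{21}$, $D_{22}$, $D_{23}$ in the other modulation cases and an interpolation step for $\rho\in(0,\frac{1}{2})$, is essential. Without this ingredient your argument proves the lemma only for $s>-\frac{1}{2}$.
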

\begin{lemma}\label{estimativan2}(Estimate for nonlinear term $N_2$)
	 Given $-\frac{1}{4}<s\leq 0$, there exists $b=b(s)<\frac{1}{2}$ such that
		\begin{equation*}
		\|u\overline{v}\|_{X^{s,-b}}\leq c\|u\|_{X^{s,b}}\|v\|_{X^{s,b}}.
		\end{equation*}
\end{lemma}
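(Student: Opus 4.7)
The plan is to follow the Plancherel--duality approach of Kenig, Ponce and Vega \cite{KPV2-a} for bilinear $X^{s,b}$ estimates, adapted to the regime $b<\frac{1}{2}$ required by Lemma \ref{edbf}(c). First, by the duality $(X^{s,-b})^{*}=X^{-s,b}$, the estimate is equivalent to the trilinear form bound
\begin{equation*}
\left|\int\!\!\int u\,\bar{v}\,\bar{w}\,dx\,dt\right|\leq c\,\|u\|_{X^{s,b}}\|v\|_{X^{s,b}}\|w\|_{X^{-s,b}}.
\end{equation*}
Passing to Fourier variables, introducing the $L^2$ functions $f_1=\langle\xi\rangle^{s}\langle\tau-\xi^{2}\rangle^{b}\hat{u}$, $f_2=\langle\xi\rangle^{s}\langle\tau-\xi^{2}\rangle^{b}\hat{v}$ and $h=\langle\xi\rangle^{-s}\langle\tau-\xi^{2}\rangle^{b}\hat{w}$, writing the convolution $\widehat{u\bar{v}}(\xi,\tau)=\int\hat{u}(\xi_1,\tau_1)\overline{\hat{v}(\xi_1-\xi,\tau_1-\tau)}\,d\xi_1\,d\tau_1$, and substituting $\xi_2=\xi_1-\xi$, $\tau_2=\tau_1-\tau$, the claim reduces to showing
\begin{equation*}
\int \frac{\langle\xi_1-\xi_2\rangle^{s}\,f_1(\xi_1,\tau_1)\overline{f_2(\xi_2,\tau_2)}\,\overline{h(\xi_1-\xi_2,\tau_1-\tau_2)}}{\langle\xi_1\rangle^{s}\langle\xi_2\rangle^{s}\langle L_0\rangle^{b}\langle L_1\rangle^{b}\langle L_2\rangle^{b}}\,d\xi_1 d\xi_2 d\tau_1 d\tau_2\leq c\|f_1\|_{L^2}\|f_2\|_{L^2}\|h\|_{L^2},
\end{equation*}
where $L_1=\tau_1-\xi_1^{2}$, $L_2=\tau_2-\xi_2^{2}$ and $L_0=\tau_1-\tau_2-(\xi_1-\xi_2)^{2}$.

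The key algebraic fact peculiar to the $u\bar{v}$ pairing is the identity $L_0-L_1+L_2=2\xi\,\xi_2$ with $\xi:=\xi_1-\xi_2$, which gives the resonance lower bound $\max\{\langle L_0\rangle,\langle L_1\rangle,\langle L_2\rangle\}\gtrsim|\xi\,\xi_2|$. In contrast with the $u^{2}$ case of Lemma \ref{estimativan1}, whose resonance is $\xi_1\xi_2=0$, the present resonance $\xi\,\xi_2=0$ includes the high--high to low configurations $|\xi_1|\sim|\xi_2|\gg|\xi|$, and this is exactly what pushes the admissible endpoint from $-\tfrac{3}{4}$ up to $-\tfrac{1}{4}$. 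My next step would be to dyadically decompose $|\xi_j|\sim N_j$ and $|L_j|\sim M_j$, trade the $|\xi\xi_2|^{b}$ gain against whichever $\langle L_j\rangle^{-b}$ is largest in each region, and apply Cauchy--Schwarz in the free variables; each piece is thereby reduced to an explicit one-dimensional weight integral of the type handled in the proof of Lemma \ref{duhamel}.

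The hard part will be the high--high to low interaction $N_1\sim N_2\sim N$, $|\xi|\sim M\ll N$. There the spatial weight ratio is of order $\langle M\rangle^{s}N^{-2s}$, the dispersive gain from the identity is at most $(MN)^{-b}$, and the Cauchy--Schwarz step over the $(\xi,\tau)$-slab contributes a volume factor of order $M^{1/2}$. Balancing these three contributions produces the critical line $s=-\tfrac{1}{4}$ at $b=\tfrac{1}{2}$. Since Lemma \ref{edbf}(c) obliges us to take $b<\tfrac{1}{2}$, the delicate point is to exploit the strict inequality $s>-\tfrac{1}{4}$ to absorb the $(\tfrac{1}{2}-b)$-loss coming from the integration of the $\langle L_j\rangle$-weights: one chooses $b=\tfrac{1}{2}-\varepsilon$ with $\varepsilon=\varepsilon(s)>0$ small enough that the positive slack $s+\tfrac{1}{4}$ dominates the $\varepsilon$-loss. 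This is the only place in the argument where the restriction $s>-\tfrac{1}{4}$ is genuinely used; the non-resonant regimes close for every $s\leq 0$.
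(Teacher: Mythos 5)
Your outline follows essentially the same route as the paper's proof: duality to a trilinear form, the resonance identity $L_0-L_1+L_2=2\xi\xi_2$ (the paper's relation \eqref{algb}, in different labeling), a case analysis according to which modulation dominates and the relative sizes of the frequencies, and Cauchy--Schwarz reducing everything to one-dimensional weight integrals, with your balance $\rho<\tfrac{b}{2}$ in the high--high-to-low interaction being precisely the condition that appears at the end of the paper's estimate \eqref{18094} and that forces $s>-\tfrac{1}{4}$ when $b<\tfrac{1}{2}$. The one caveat is that what you have written is a plan rather than a proof: the region-by-region integral estimates, which constitute essentially the whole of the paper's argument, are not carried out.
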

\begin{lemma}\label{estimativan3}(Estimates for nonlinear term $N_3$)
	\begin{itemize}
		\item[(a)] Given $-\frac{3}{4}<s\leq 0$, there exists $b=b(s)<\frac{1}{2}$ such that
		\begin{equation*}
		\|\overline{u}\overline{v}\|_{X^{s,-b}}\leq c\|u\|_{X^{s,b}}\|v\|_{X^{s,b}}.
		\end{equation*}
		
		\item[(b)] Given $-\frac{3}{4}<s\leq -\frac{1}{2}$, there exists $b=b(s)<\frac{1}{2}$ such that
		\begin{equation*}
		\|\overline{uv}\|_{W^{s,-b}}\leq c\|u\|_{X^{s,b}}\|v\|_{X^{s,b}}.
		\end{equation*}
	\end{itemize}	
\end{lemma}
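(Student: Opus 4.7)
My plan is to reduce both estimates to bilinear inequalities for $uv$ in ``reverse-modulation'' Bourgain spaces, and then exploit a favorable resonance. Using $\overline{u}\,\overline{v}=\overline{uv}$ together with the Fourier identity $|\widehat{\overline{w}}(\xi,\tau)|=|\hat{w}(-\xi,-\tau)|$, the change of variables $(\xi,\tau)\mapsto(-\xi,-\tau)$ turns (a) into the equivalent estimate
\begin{equation*}
\left(\int\!\!\int \langle\xi\rangle^{2s}\langle\tau+\xi^2\rangle^{-2b}|\widehat{uv}(\xi,\tau)|^2\,d\xi\,d\tau\right)^{1/2}\le c\,\|u\|_{X^{s,b}}\|v\|_{X^{s,b}},
\end{equation*}
and (b) into the analogous inequality with $\langle\xi\rangle^{2s}$ replaced by $\langle\tau\rangle^{2s}$. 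The output modulation has become the ``anti-Schr\"odinger'' weight $\langle\tau+\xi^2\rangle^{-b}$, and this is what makes the nonlinearity $\overline{u}^2$ technically different from $u^2$.

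The key algebraic identity, valid on the support of the convolution defining $\widehat{uv}(\xi,\tau)$, is
\begin{equation*}
(\tau+\xi^2)-(\tau_1-\xi_1^2)-((\tau-\tau_1)-(\xi-\xi_1)^2)=\xi^2+\xi_1^2+(\xi-\xi_1)^2\ge 0,
\end{equation*}
so the maximum of the three modulations $|\sigma_0|=|\tau+\xi^2|$, $|\sigma_1|=|\tau_1-\xi_1^2|$, $|\sigma_2|=|(\tau-\tau_1)-(\xi-\xi_1)^2|$ always dominates $\xi^2+\xi_1^2+(\xi-\xi_1)^2$. This positivity, in striking contrast with the $u^2$ resonance $2\xi_1(\xi_1-\xi)$ of Lemma \ref{estimativan1}(a), which vanishes along a line, is precisely what permits us to reach $s>-\frac{3}{4}$ with $b<\frac{1}{2}$. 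I would then dualize (setting $F(\xi,\tau)=\langle\xi\rangle^s\langle\tau-\xi^2\rangle^b|\hat{u}(\xi,\tau)|$, analogously $G$ from $v$, and $H$ from the dual variable, so that $\|F\|_{L^2}=\|u\|_{X^{s,b}}$) and partition the integration region according to which $|\sigma_j|$ is maximal. In each of the three cases the dominant modulation weight absorbs a power of $\xi^2+\xi_1^2+(\xi-\xi_1)^2$ sufficient to control the $\langle\xi\rangle^s\langle\xi_1\rangle^{-s}\langle\xi-\xi_1\rangle^{-s}$ factors, and the remaining one-dimensional weighted integrals are handled by Cauchy--Schwarz and the $L^4$ Strichartz bound, following the blueprint of Kenig--Ponce--Vega \cite{KPV2-a} and Bejenaru--Tao \cite{bt} but adapted to the regime $b<\frac{1}{2}$. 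The parameter $b=b(s)<\frac{1}{2}$ is chosen close enough to $\frac{1}{2}$ that the resulting dyadic summations converge at the endpoint, which is where the restriction $s>-\frac{3}{4}$ is saturated.

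For part (b) one further decomposes the support of $\widehat{uv}$ into $\{|\tau|\ge\frac{1}{2}\xi^2\}$ and $\{|\tau+\xi^2|\ge\frac{1}{2}\xi^2\}$, whose union is the whole support by the triangle inequality. In the first region $\langle\tau\rangle\ge c\langle\xi\rangle^2$, hence $\langle\tau\rangle^{2s}\le c\langle\xi\rangle^{4s}\le c\langle\xi\rangle^{2s}$ for $s\le 0$, and the estimate reduces directly to (a). In the second region $\langle\tau+\xi^2\rangle\ge c\langle\xi\rangle^2$, so for $\delta\in(0,b)$ one has $\langle\tau+\xi^2\rangle^{-2b}\le c\langle\xi\rangle^{-4\delta}\langle\tau+\xi^2\rangle^{-2b+2\delta}$; choosing $\delta\approx -s/2$ and using $\langle\tau\rangle^{2s}\le 1$ absorbs the missing $\langle\xi\rangle^{2s}$ and brings the estimate back to (a), the condition $0<\delta<b<\frac{1}{2}$ being consistent precisely when $s\le -\frac{1}{2}$. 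The main obstacle throughout is the trilinear analysis of (a) in the regime $b<\frac{1}{2}$: several integrals that converge comfortably for $b>\frac{1}{2}$ become only borderline convergent, and the full strength of the resonance identity $\xi^2+\xi_1^2+(\xi-\xi_1)^2$ must be used to close the estimate at the endpoint $s=-\frac{3}{4}$.
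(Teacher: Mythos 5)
Your treatment of part (a) is, up to the cosmetic reflection $(\xi,\tau)\mapsto(-\xi,-\tau)$ that moves the conjugation from the inputs to the output, the same as the paper's: duality, a case decomposition according to which of the three modulations is maximal, and the resonance identity \eqref{2901}, whose right-hand side $\xi^2+\xi_1^2+(\xi-\xi_1)^2$ is indeed the reason the $\overline{u}\,\overline{v}$ nonlinearity closes for $s>-\frac34$ with $b<\frac12$. You invoke the $L^4$ Strichartz bound where the paper uses the calculus Lemmas \ref{lemagtv}--\ref{Hol}, but that is an interchangeable choice of tools, and your outline is consistent with the reductions \eqref{23092}--\eqref{23094}.

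Part (b), however, contains a genuine gap. Your first region $\{|\tau|\ge\frac12\xi^2\}$ is fine and matches the paper's reduction to (a). In the second region you replace $\langle\tau+\xi^2\rangle^{-2b}$ by $c\,\langle\xi\rangle^{-4\delta}\langle\tau+\xi^2\rangle^{-2b+2\delta}$ with $\delta\approx -s/2>0$ and claim this "brings the estimate back to (a)". It does not: after discarding $\langle\tau\rangle^{2s}\le 1$ you are left with $\|uv\|_{X^{s,-(b-\delta)}}$ (in your reflected variables), and since $b-\delta<b$ this is a \emph{larger} norm than the $X^{s,-b}$ norm controlled by (a) --- you have traded away part of the output modulation weight to manufacture $\langle\xi\rangle^{2s}$, so you now need the strictly stronger bilinear estimate $\|uv\|_{X^{s,-(b+s/2)}}\le c\|u\|_{X^{s,b}}\|v\|_{X^{s,b}}$, with output exponent $b+s/2<\frac14$ for $s\le-\frac12$. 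That estimate is not a consequence of (a); it fails to be covered precisely in the subregion where the output modulation is the dominant one and the two input modulations are small, which is where the full exponent $2b$ on the output weight is used in the proof of (a). This is why the paper does not reduce the region $10|\tau|\le|\xi|^2$ to part (a) but instead proves \eqref{1442}--\eqref{1443} by a separate case analysis (modeled on \eqref{15093} in the proof of Lemma \ref{estimativan1}(b)), keeping the full weight $\langle\tau-\xi^2\rangle^{-2b}\sim\langle\xi\rangle^{-4b}$ and exploiting the constraint $|\xi_1||\xi-\xi_1|\lesssim\langle\tau_1+\xi_1^2\rangle$ forced by the resonance on the set $B_3$. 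To repair your argument you must carry out that analysis rather than appeal to (a). Separately, your parenthetical claim that $0<\delta<b<\frac12$ with $\delta\approx-s/2$ is "consistent precisely when $s\le-\frac12$" is backwards: those inequalities hold for all $-1<s<0$; the restriction $s\le-\frac12$ in the statement is there only because the $W^{s,-b}$ estimate is needed solely to run the time-trace estimate of Lemma \ref{duhamel}(b) outside the range $-\frac12<s\le\frac12$.
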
	
We shall prove these lemmas by the calculus techniques of \cite{KPV2-a}. We begin with some elementary integral estimates, whose proofs can be found in \cite{GTV}, \cite{bop} and \cite{Holmerkdv}, respectively.
\begin{lemma}\label{lemagtv}
	Let  $b_1,b_2$, such that $b_1+b_2>\frac{1}{2}$ and $b_1,b_2< \frac{1}{2}$. Then
	\begin{equation*}
	\int\frac{dy}{\langle y-\alpha\rangle^{2b_1}\langle y-\beta\rangle^{2b_2}}\leq \frac{c}{\langle \alpha-\beta\rangle^{2b_1+2b_2-1}}.
	\end{equation*}
\end{lemma}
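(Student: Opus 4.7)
The plan is to prove the convolution-type bound by translation and a standard case split on the relative size of $|y-\alpha|$ and $|y-\beta|$ compared to $|\alpha-\beta|$. Set $A=\alpha-\beta$; after the translation $y\mapsto y+\beta$ the integral becomes
\begin{equation*}
J(A)=\int\frac{dy}{\langle y-A\rangle^{2b_1}\langle y\rangle^{2b_2}},
\end{equation*}
and the claim reduces to $J(A)\le c\langle A\rangle^{1-2b_1-2b_2}$. The case $|A|\le 1$ is handled first: the integral near $y=0$ is integrable by $2b_2<1$, the integral near $y=A$ by $2b_1<1$, and the tail $|y|\gg 1$ is integrable by $2(b_1+b_2)>1$, so $J(A)\lesssim 1\sim\langle A\rangle^{1-2b_1-2b_2}$.

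The main case is $|A|>1$, where I would decompose $\mathbb{R}$ into three pieces:
\begin{equation*}
\mathrm{I}=\{|y|\le|A|/2\},\qquad \mathrm{II}=\{|y-A|\le|A|/2\},\qquad \mathrm{III}=\mathbb{R}\setminus(\mathrm{I}\cup\mathrm{II}).
\end{equation*}
On region I, $\langle y-A\rangle\sim\langle A\rangle$, and $\int_{|y|\le|A|/2}\langle y\rangle^{-2b_2}dy\lesssim\langle A\rangle^{1-2b_2}$ because $2b_2<1$; multiplying by $\langle A\rangle^{-2b_1}$ gives the desired bound. Region II is symmetric by the change of variables $y\mapsto A-y$, which exchanges the roles of $b_1$ and $b_2$.

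For region III, I would split further into a bounded piece $|y|\le 2|A|$ and a far piece $|y|>2|A|$. On the bounded piece both $\langle y\rangle\gtrsim\langle A\rangle$ and $\langle y-A\rangle\gtrsim\langle A\rangle$, so the integrand is $\lesssim\langle A\rangle^{-2b_1-2b_2}$ and the volume is $\lesssim|A|$, again yielding $\langle A\rangle^{1-2b_1-2b_2}$. On the far piece $\langle y-A\rangle\sim\langle y\rangle$, so the integrand is $\lesssim\langle y\rangle^{-2(b_1+b_2)}$; since $2(b_1+b_2)>1$ this is integrable and $\int_{|y|>2|A|}\langle y\rangle^{-2(b_1+b_2)}dy\lesssim|A|^{1-2(b_1+b_2)}$, matching the claim.

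There is no real obstacle here; the only thing to watch is the bookkeeping that each of the three hypotheses is used exactly once: $2b_1<1$ and $2b_2<1$ provide local integrability near the two singularities (regions II and I respectively), while $2(b_1+b_2)>1$ provides integrability at infinity on the far part of region III. No single region uses more than one hypothesis in an essential way, which is why the result is sharp.
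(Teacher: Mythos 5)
Your proof is correct and complete; the paper itself does not prove this lemma but simply cites it from Ginibre--Tsutsumi--Velo \cite{GTV}, and your region decomposition (near $\beta$, near $\alpha$, intermediate, and far) is exactly the standard argument found there. One cosmetic remark: since $\langle\cdot\rangle=1+|\cdot|$ there are no genuine local singularities, so the real role of $2b_i<1$ is not local integrability but the growth bound $\int_{|y|\le R}\langle y\rangle^{-2b_i}\,dy\lesssim R^{1-2b_i}$ on regions I and II --- which is precisely the estimate you use, so nothing is affected.
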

\begin{lemma}\label{lemanovo}
	If $b>\frac{1}{2}$, then
	\begin{equation*}
	\int_{-\infty}^{\infty}\frac{dx}{\langle\alpha_0+\alpha_1x+x^2\rangle^b}\leq c.
	\end{equation*}
\end{lemma}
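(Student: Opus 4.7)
The plan is to reduce to a one-parameter family by completing the square, then separate the sign-definite case from the case where $y^2+\beta$ has real zeros. Writing $\alpha_0 + \alpha_1 x + x^2 = (x+\alpha_1/2)^2 + \beta$ with $\beta := \alpha_0 - \alpha_1^2/4$ and substituting $y = x + \alpha_1/2$, the problem reduces to showing that
\[
I(\beta) := \int_{-\infty}^{\infty} \frac{dy}{\langle y^2 + \beta\rangle^b}
\]
is bounded uniformly in $\beta \in \mathbb{R}$.

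The case $\beta \geq 0$ is immediate: $y^2+\beta \geq 0$, so $\langle y^2+\beta\rangle \geq 1+y^2$, and $I(\beta) \leq \int (1+y^2)^{-b}\,dy < \infty$ precisely because $b > 1/2$. For $\beta < 0$, write $\beta = -\gamma^2$ with $\gamma > 0$ and use the symmetry $y \mapsto -y$ to work on $y \geq 0$. On the region $y \geq 2\gamma$ one has $y^2 - \gamma^2 \geq 3y^2/4$, so the contribution is dominated by $\int (1+3y^2/4)^{-b}\,dy$, again uniformly finite. The zero of the quadratic at $y = \gamma$ lies in the remaining region $0 \leq y \leq 2\gamma$, which I handle by substituting $y = \gamma + t$, $t \in [-\gamma,\gamma]$. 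Then $y^2-\gamma^2 = t(2\gamma+t)$ and $|2\gamma+t| \in [\gamma,3\gamma]$, giving $\langle y^2-\gamma^2\rangle \geq 1 + \gamma|t|$; the change of variables $s = \gamma t$ yields
\[
\int_0^{2\gamma}\frac{dy}{\langle y^2-\gamma^2\rangle^b} \leq \frac{1}{\gamma}\int_{-\gamma^2}^{\gamma^2}\frac{ds}{(1+|s|)^b}.
\]

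The only delicate step is bounding this last expression uniformly in $\gamma$, and this is where the hypothesis $b > 1/2$ enters decisively. For $\gamma \leq 1$ the integral over $[-\gamma,\gamma]$ is trivially at most $2\gamma \leq 2$. For $\gamma > 1$ an elementary evaluation of the $s$-integral gives $O(1)$ when $b > 1$, $O(\log\gamma)$ when $b = 1$, and $O(\gamma^{2(1-b)})$ when $1/2 < b < 1$; after dividing by $\gamma$ these become $O(\gamma^{-1})$, $O(\gamma^{-1}\log\gamma)$, and $O(\gamma^{1-2b})$ respectively, and in each case the bound is uniform in $\gamma$ (indeed tends to $0$) exactly because $b > 1/2$ forces $1-2b < 0$. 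Combining the two regions in the $\beta < 0$ case with the straightforward $\beta \geq 0$ case produces a uniform bound on $I(\beta)$, completing the argument.
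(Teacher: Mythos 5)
Your proof is correct. Note that the paper does not actually prove Lemma \ref{lemanovo}: it defers it to the cited reference \cite{bop}, where the standard argument is essentially the one you give --- complete the square, dispose of the case of non-negative discriminant by comparison with $\int(1+y^2)^{-b}\,dy$, and handle the case of real roots by a linear change of variables in a neighbourhood of the roots, with the hypothesis $b>\frac{1}{2}$ entering exactly where you place it, in the exponent $1-2b<0$. Your write-up is a complete, self-contained substitute for the citation, and every step (the lower bound $\langle y^2-\gamma^2\rangle\geq 1+\gamma|t|$ on $[0,2\gamma]$, the rescaling $s=\gamma t$, and the case analysis in $\gamma$) checks out.
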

\begin{lemma}\label{Hol}
	If $b<\frac{1}{2}$, then
	\begin{equation*}
	\int_{|x|<\beta}\frac{dx}{\langle x\rangle^{4b-1}|\alpha-x|^{\frac{1}{2}}}\leq c\frac{(1+\beta)^{2-4b}}{\langle\alpha\rangle^{\frac{1}{2}}}.
	\end{equation*}
\end{lemma}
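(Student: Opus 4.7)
The plan is to estimate
\begin{equation*}
J := \int_{|x|<\beta}\langle x\rangle^{1-4b}|\alpha-x|^{-1/2}\,dx
\end{equation*}
by decomposing the integration domain according to the relative sizes of $\langle\alpha\rangle$ and $1+\beta$, and within each regime further splitting by the size of $|x-\alpha|$. Two elementary building blocks will recur: (i) $\int_{|x|<\beta}\langle x\rangle^{1-4b}\,dx \leq c(1+\beta)^{2-4b}$, valid precisely because $b<\frac{1}{2}$ forces $2-4b>0$; and (ii) $\int_{|y|<r}|y|^{-1/2}\,dy \leq c r^{1/2}$ for every $r>0$.

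I first handle the far regime $\langle\alpha\rangle \geq 2(1+\beta)$. Any $x$ with $|x|<\beta$ then satisfies $|\alpha-x|\geq |\alpha|-\beta \geq c\langle\alpha\rangle$, so pulling the factor $|\alpha-x|^{-1/2}\leq c\langle\alpha\rangle^{-1/2}$ out of the integral and invoking (i) immediately gives $J\leq c\langle\alpha\rangle^{-1/2}(1+\beta)^{2-4b}$, as desired.

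In the complementary near regime $\langle\alpha\rangle < 2(1+\beta)$, I split $J = J_1 + J_2$, where $J_1$ integrates over $\{|x-\alpha|<\langle\alpha\rangle/2\}\cap\{|x|<\beta\}$ and $J_2$ over the complementary subset of $\{|x|<\beta\}$. On $J_2$ the preceding argument applies verbatim because $|\alpha-x|\geq \langle\alpha\rangle/2$. On $J_1$, the triangle inequality gives $\langle x\rangle \sim \langle\alpha\rangle$ on the near-diagonal ball, so (ii) yields
\begin{equation*}
J_1 \leq c\langle\alpha\rangle^{1-4b}\int_{|x-\alpha|<\langle\alpha\rangle/2}|\alpha-x|^{-1/2}\,dx \leq c\langle\alpha\rangle^{3/2-4b}.
\end{equation*}
Using $\langle\alpha\rangle \leq c(1+\beta)$ together with $2-4b>0$, this last quantity is bounded by $c(1+\beta)^{2-4b}/\langle\alpha\rangle^{1/2}$, closing the estimate.

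The main obstacle is purely bookkeeping: one must keep the splittings consistent in the boundary cases (e.g.\ when $|\alpha|$ is small, or when $|\alpha|$ is comparable to $2(1+\beta)$) so that the comparability $\langle x\rangle\sim\langle\alpha\rangle$ on the near-diagonal ball, and the constant $c$ in $|\alpha-x|\geq c\langle\alpha\rangle$, are both uniform. The hypothesis $b<\frac{1}{2}$ is used twice: it guarantees the polynomial bound $(1+\beta)^{2-4b}$ in (i), and it ensures $2-4b>0$ so that $\langle\alpha\rangle^{2-4b}\leq c(1+\beta)^{2-4b}$ follows from $\langle\alpha\rangle\leq c(1+\beta)$.
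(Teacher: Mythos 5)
Your proof is correct. The paper does not prove this lemma itself (it defers to \cite{Holmerkdv}), but your argument --- splitting according to whether $\langle\alpha\rangle$ dominates $1+\beta$, and in the near regime isolating the ball $|x-\alpha|<\langle\alpha\rangle/2$ where $\langle x\rangle\sim\langle\alpha\rangle$ --- is exactly the standard case analysis used there, and every step (in particular the uniform bounds $|\alpha-x|\geq\langle\alpha\rangle/2$ in the far regime and $\langle x\rangle^{1-4b}\leq c\langle\alpha\rangle^{1-4b}$ on the near-diagonal ball, valid for either sign of $1-4b$) checks out.
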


\subsection{Proof of Lemma \ref{estimativan1}}
Let $\rho=-s\in [0,\frac{3}{4})$. For $u,v\in X^{s,b}=X^{-\rho,b}$ define 
$$g(\xi,\tau)=\langle\tau-\xi^2\rangle^b\langle\xi\rangle^{-\rho}\hat{u}(\xi,\tau),\ h(\xi,\tau)=\langle\tau-\xi^2\rangle^b\langle\xi\rangle^{-\rho}\hat{v}(\xi,\tau).$$
Then $\|g\|_{L^2}=\|u\|_{X^{s,b}}$ and $\|h\|_{L^2}=\|v\|_{X^{s,b}}$. We write the left hand side of (\ref{objetivo}) in terms of $f$ and $g$, i.e.,
\begin{equation*}
	\|uv\|_{X^{s,-b}}
	:=\sup_{\|\phi\|_{L^2}\leq 1}W(g,h,\phi),
\end{equation*}
where,
\begin{equation*}
	W(g,h,\phi)=\int_{\mathbb{R}^4}\frac{\langle \xi\rangle^s}{\langle\tau-\xi^2\rangle^{b}}\frac{g(\xi_1,\tau_1)}{\langle \xi_1\rangle^{s}\langle\tau_1-\xi_1^2\rangle^b}\frac{h(\xi-\xi_1,\tau-\tau_1)}{\langle \xi-\xi_1\rangle^{s}\langle\tau-\tau_1-(\xi-\xi_1)^2\rangle^b}\phi(\xi,\tau)d\xi_1d\tau_1d\xi d\tau.
\end{equation*}
Initially, we treat the case $\rho=0$. We integrate over $\xi_1$ and $\tau_1$ first, and then we use the Cauchy-Schwarz and H\"older  inequalities to obtain

\begin{equation}\label{ww1}
\begin{split}
W^2(g,h,\phi) &\leq  \|\phi\|_{L^2}^2 \times \left \|\frac{1}{\langle\tau-\xi^2\rangle^{b}}\int \int\frac{g(\xi_1,\tau_1) }{\langle\tau_1-\xi_1^2\rangle^b}\frac{h(\xi-\xi_1,\tau-\tau_1)}{\langle\tau-\tau_1-(\xi-\xi_1)^2\rangle^b}d\xi_1d\tau_1\right\|_{L_{\tau}^2L_{\xi}^2}^2\\
&\leq \|\phi\|_{L^2}^2\|g\|_{L^2}^2\|h\|_{L^2}^2\\
& \quad \times \left\| \frac{1}{\langle\tau-\xi^2\rangle^{2b}}\int \int\frac{d\tau_1 d\xi_1}{\langle\tau_1-\xi_1^2\rangle^{2b}\langle\tau-\tau_1-(\xi-\xi_1)^2\rangle^{2b} }\right\|_{L^{\infty}_{\xi,\tau}}.
\end{split}
\end{equation}
Using Lemmas \ref{lemagtv} and \ref{lemanovo}, we obtain
\begin{equation}\label{01121}
\frac{1}{\langle\tau-\xi^2\rangle^{2b}}\int \int\frac{d\tau_1 d\xi_1}{\langle\tau_1-\xi_1^2\rangle^{2b}\langle\tau-\tau_1-(\xi-\xi_1)^2\rangle^{2b} }\leq c\int_{-\infty}^{\infty}\frac{d\xi_1}{\langle\tau-\xi^2+2\xi_1(\xi-\xi_1)\rangle^{4b-1}},
\end{equation}
that is bounded provided $\frac{3}{8}<b<\frac{1}{2}$. This proves the case $\rho=0$.

To prove the case $\rho\in (1/2,3/4)$, we write
\begin{eqnarray*}
	W(g,h,\phi)&=&\int_{\mathbb{R}^4}\frac{\langle \xi\rangle^s}{\langle\tau-\xi^2\rangle^{b}}\frac{g(\xi_1,\tau_1)}{\langle \xi_1\rangle^{s}\langle\tau_1-\xi_1^2\rangle^b}\frac{h(\xi-\xi_1,\tau-\tau_1)}{\langle \xi-\xi_1\rangle^{s}\langle\tau-\tau_1-(\xi-\xi_1)^2\rangle^b}\phi(\xi,\tau)d\xi_1d\tau_1d\xi d\tau \nonumber \\
	&=&\int_{\mathcal{R}_1}+\int_{\mathcal{R}_2}+\int_{\mathcal{R}_3}+\int_{\mathcal{R}_4}:= W_1+W_2+W_3+W_4,\nonumber
\end{eqnarray*}
 where
\begin{equation*}
\begin{split}
\mathcal{R}_1&=\{(\xi,\xi_1,\tau,\tau_1)\in\mathbb{R}^4;\, |\xi_1|\leq 1 \},\\
\mathcal{R}_2&=\{(\xi,\xi_1,\tau,\tau_1)\in\mathbb{R}^4;\, |\xi_1|\geq 1,\, \max\{|\tau-\xi^2|,|\tau_1-\xi_1^2|, |\tau-\tau_1-(\xi-\xi_1^2)|\}=|\tau-\xi^2| \},\\
\mathcal{R}_3&=\{(\xi,\xi_1,\tau,\tau_1)\in\mathbb{R}^4;\, |\xi_1|\geq 1,\, \max\{|\tau-\xi^2|,|\tau_1-\xi_1^2|, |\tau-\tau_1-(\xi-\xi_1^2)|\}=|\tau_1-\xi_1^2| \},
\end{split}
\end{equation*}
\begin{equation*}
\begin{split}
\mathcal{R}_4&=\big\{(\xi,\xi_1,\tau,\tau_1)\in\mathbb{R}^4;\, |\xi_1|\geq 1, \max\{|\tau-\xi^2|,|\tau_1-\xi_1^2|, |\tau-\tau_1-(\xi-\xi_1^2)|\}\\
& \ \ \ \ \ \ \ \ \ \ \ \ \ \ \ \ \ \ \ \ \ \ \ \ \ \ \ \ \ \ \ \ \ \ \ \ \ \ \ \ \ \ \ \ \ \ \ \ \ \ \ \ \ \ \ \ \ \ \ \ \ \ \ \ \ \ \ \ \ \ \ \ \ \ \ \ \ \ \ \ \ \ \ \ \ \ \ =|\tau-\tau_1-(\xi-\xi_1^2)| \big\}.
\end{split}
\end{equation*}
The estimate for $W_1$ can be obtained as in the case $\rho=0$, since the frequencies cancel each other out. To estimate $W_2$ we integrate over $\xi_1$ and $\tau_1$ first, and then we use the Cauchy-Schwarz and H\"older  inequalities to obtain
\begin{eqnarray}
W_2^2 &\leq&c \|\phi\|_{L^2}^2\|g\|_{L^2}^2\|h\|_{L^2}^2\nonumber\\
      & &\quad \times \left\| \frac{|\xi|^{2s}}{\langle\tau-\xi^2\rangle^{2b}}\int \int\frac{\chi_{\{A(\xi,\tau)\}}d\tau_1 d\xi_1}{\langle \xi_1\rangle^{2s}\langle\tau_1-\xi_1^2\rangle^{2b}\langle\xi-\xi_1\rangle^{2s}\langle\tau-\tau_1-(\xi-\xi_1)^2\rangle^{2b} }\right\|_{L^{\infty}_{\xi,\tau}},\label{ww2}
      \end{eqnarray}
where $A(\xi,\tau)=\{(\xi_1,\tau_1)\in\mathbb{R}^2; \max\{|\tau-\xi^2|,|\tau_1-\xi_1^2|,|\tau-\tau_1+(\xi-\xi_1)^2|\}=|\tau-\xi^2| \}$.

For $W_3$ we integrate over $\xi$ and $\tau$ first, and then we use the Cauchy-Schwarz and H\"older inequalities to obtain

\begin{equation}\label{ww3}
\begin{split}
W_3^2&\leq \|\phi\|_{L^2}^2\|g\|_{L^2}^2\|h\|_{L^2}^2\\
& \quad \times \left\| \frac{|\xi|^{2s}}{\langle\tau_1-\xi_1^2\rangle^{2b}}\int \int\frac{\chi_{\{B(\xi_1,\tau_1)\}}d\tau d\xi}{\langle \xi_1\rangle^{2s}\langle\tau-\xi^2\rangle^{2b}\langle\xi-\xi_1\rangle^{2s}\langle\tau-\tau_1-(\xi-\xi_1)^2\rangle^{2b} }\right\|_{L^{\infty}_{\xi_1,\tau_1}},
\end{split}
\end{equation}
where $B(\xi_1,\tau_1)=\{(\xi,\tau)\in\mathbb{R}^2;\  \max\{|\tau-\xi^2|,|\tau_1-\xi_1^2|,|\tau-\tau_1+(\xi-\xi_1)^2|\}=|\tau_1-\xi_1^2| \}$.

By symmetry, the estimate for $W_4$ is similar to the $W_3$ estimate.

From the estimates \eqref{ww2} and \eqref{ww3} it suffices to show that
\begin{equation}\label{eq2}
\left\| \frac{|\xi|^{2s}}{\langle\tau-\xi^2\rangle^{2b}}\int \int\frac{\chi_{\{A(\xi,\tau)\}}d\tau_1 d\xi_1}{\langle \xi_1\rangle^{2s}\langle\tau_1-\xi_1^2\rangle^{2b}\langle\xi-\xi_1\rangle^{2s}\langle\tau-\tau_1-(\xi-\xi_1)^2\rangle^{2b} }\right\|_{L^{\infty}_{\xi,\tau}}\leq c,
\end{equation}
\begin{equation}\label{eq3}
\left\| \frac{1}{\langle\tau_1-\xi_1^2\rangle^{2b}}\int \int\frac{|\xi|^{2s}\chi_{\{B(\xi_1,\tau_2)\}}d\tau d\xi}{\langle \xi_1\rangle^{2s}\langle\tau-\xi^2\rangle^{2b}\langle\xi-\xi_1\rangle^{2s}\langle\tau-\tau_1-(\xi-\xi_1)^2\rangle^{2b} }\right\|_{L^{\infty}_{\xi_1,\tau_1}}\leq c.
\end{equation}
To estimate \eqref{eq2} and $\eqref{eq3}$, we will use the algebraic relation
\begin{equation}\label{algebra}
\tau-\xi^2-(\tau-\tau_1-(\xi-\xi_1)^2)-(\tau_1-\xi_1^2)=-2\xi_1(\xi-\xi_1).
\end{equation}
Thus, in $A(\xi,\tau)$ we have 
\begin{equation}\label{3011}
|\xi_1(\xi-\xi_1)|\leq \frac{3}{2}|\tau-\xi^2|\; \; \text{and}
\end{equation}
\begin{equation}
|\tau-\xi^2+2\xi(\xi-\xi_1)|\leq |\tau_1-\xi_1^2|+|(\tau-\tau_1-(\xi-\xi_1)^2)|\leq 2|\tau-\xi^2|.
\end{equation}
We can assume that $|\xi-\xi_1|\geq 1$, since otherwise, the bound reduces to the case $\rho=0$. Thus, we have $\langle \xi_1\rangle^{\rho} \langle \xi-\xi_1\rangle^{\rho}\leq c |\xi_1|^{\rho}|\xi-\xi_1|^{\rho}.$

Using Lemma \ref{lemagtv} and \eqref{3011}, we see that
\begin{equation}\label{14091}
\begin{split}
&\frac{1}{\langle\tau-\xi^2\rangle^{2b}\langle\xi\rangle^{2\rho}}\int\int_{A}\frac{(|\xi_1||\xi-\xi_1|)^{2\rho}d\tau_1d\xi_1}{\langle\tau_1-\xi_1^2\rangle^{2b}\langle\tau-\tau_1-(\xi-\xi_1)^2\rangle^{2b}}\\
&\quad \leq  \frac{c}{\langle\tau-\xi^2\rangle^{2b}\langle\xi\rangle^{2\rho}}\int\frac{\langle\tau-\xi^2\rangle^{2\rho}d\xi_1}{\langle\tau-\xi^2+2\xi_1(\xi-\xi_1)\rangle^{4b-1}},
\end{split}
\end{equation}
where we have used that $\frac{1}{4}<b<\frac{1}{2}$.

Set $\eta=\tau-\xi^2+2\xi_1(\xi-\xi_1)$. Then $\xi_1=\frac{1}{2}(\xi\pm (2\tau-\xi^2-2\eta)^{\frac{1}{2}})$, $|2\xi_1-\xi|=|2\tau-\xi^2-2\eta|^{\frac{1}{2}}$ and $d\xi_1=\frac{c}{|\tau-\eta-\frac{\xi^2}{2}|^{\frac{1}{2}}}d\eta$. Substituting into right hand side \eqref{14091} and using Lemma \ref{Hol}, we estimate the right hand side of \eqref{14091} by 
 \begin{eqnarray*}
	& & \frac{c}{\langle \xi\rangle^{2\rho}\langle\tau-\xi^2\rangle^{2b-2\rho}}\int_{|\eta|\leq 2|\tau-\xi^2|}\frac{d\eta}{\langle\eta\rangle^{4b-1}|\tau-\eta-\frac{\xi^2}{2}|^{\frac{1}{2}}}\\
	& &\quad \leq \frac{c\langle\tau-\xi^2\rangle^{2-4b}}{\langle\tau-\xi^2\rangle^{2b-2\rho}\langle\tau-\xi^2\rangle^{\frac{1}{2}}}\leq c\langle\tau-\xi^2\rangle^{2-6b+2\rho-\frac{1}{2}}.
\end{eqnarray*}
This expression is bounded, provided $b\geq \frac{1}{4}+\frac{\rho}{3}$. Note that as $\rho<\frac{3}{4}$, we can choice $b(\rho)$ depending of $\rho$ such that $b(\rho)<\frac{1}{2}$.

Now we prove \eqref{eq3} in the case $\rho  \in(\frac{1}{2},\frac{3}{4})$. In $B(\xi_1,\tau_1)$, using \eqref{algebra}, we have that
\begin{equation}
|\xi_1(\xi-\xi_1)|\leq\frac{3}{2}|\tau_1-\xi_1^2|\; \; \text{and}
\end{equation}
\begin{equation}
|\tau_1-\xi_1^2+2\xi_1(\xi-\xi_1)|\leq |\tau-\xi^2|+|\tau-\tau_1-(\xi-\xi_1)^2|\leq 2|\tau_1-\xi_1^2|.
\end{equation}
From Lemma \ref{lemagtv}, we obtain
\begin{equation*}
\begin{split}
& \frac{1}{\langle\tau_1-\xi_1^2\rangle^{2b}}\int\int_{B}\frac{(|\xi_1||\xi-\xi_1|)^{2\rho}d\xi d\tau}{\langle\xi\rangle^{2\rho}\langle\tau-\xi^2\rangle^{2b}\langle\tau-\tau_1-(\xi-\xi_1)^2\rangle^{2b}}\\
& \quad \leq  \frac{c}{\langle\tau-\xi_1^2\rangle^{2b-2\rho}}\int_{D} \frac{d\xi}{\langle\xi\rangle^{2\rho}\langle\tau_1-\xi_1^2+2\xi_1(\xi_1-\xi)\rangle^{4b-1}},
\end{split}
\end{equation*}
 where $D=D(\xi_1,\tau_1)=\{\xi\in\mathbb{R};\,|\tau_1-\xi_1^2+2\xi_1(\xi_1-\xi)|\leq 2|\tau_1-\xi_1^2|\}$ and $\frac{1}{4}<b<\frac{1}{2}$. 
Now we divide $D$ into two parts $D_1$ and $D_2$, where, $D_1=\big\{\xi\in D;\,|2\xi_1(\xi_1-\xi)|\leq \frac{|\tau_1-\xi_1^2|}{2}\big\},\ 
D_2=\big\{\xi\in D;\,\frac{|\tau_1-\xi_1^2|}{4}\leq|\xi_1(\xi_1-\xi)|\leq\frac{3|\tau_1-\xi_1^2|}{2}\big\}.$

In $D_1$ we have $|\tau_1-\xi_1^2|\leq 2 |\tau_1-\xi_1^2+2\xi_1(\xi_1-\xi)|$, it follows that
\begin{equation}\label{14094}
\frac{1}{\langle\tau_1-\xi_1^2\rangle^{2b-2\rho}}\int_{D_1} \frac{d\xi}{\langle\xi\rangle^{2\rho}\langle\tau_1-\xi_1^2+2\xi_1(\xi_1-\xi)\rangle^{4b-1}}\leq \frac{1}{\langle\tau_1-\xi_1^2\rangle^{2b-2\rho+4b-1}}\int \frac{d\xi}{\langle\xi\rangle^{2\rho}}.
\end{equation}

This expression is bounded, provided $\rho>\frac{1}{2}$ and $b\geq\frac{1}{6}+\frac{\rho}{3}$. Note that as $\rho<\frac{3}{4}$, we can choice $b(\rho)<\frac{1}{2}$.

We subdivide $D_2$, into three pieces, i.e., $D_{21}\cup D_{22}\cup D_{23}$, where

\begin{equation*}
 D_{21}=\big\{\xi\in D_2;\, \frac{|\xi|}{4}\leq |\xi_1|\leq 100|\xi|\big\},\ D_{22}=\big\{\xi\in D_2;\, 1\leq |\xi_1|\leq \frac{|\xi|}{4}\big\}\ \text{and}
 \end{equation*}
 \begin{equation*}
 D_{23}=\{\xi\in D_2;\, 100|\xi|\leq |\xi_1|\}.
\end{equation*}

In $D_{21}$ we have $|\xi|^2\sim |\xi_1|^2\geq c \langle\tau_1-\xi_1^2\rangle$. Set $\eta=\tau_1-\xi_1^2+2\xi_1(\xi_1-\xi)$. Then $d\eta=-2\xi_1d\xi$. Using Lemma \ref{Hol}, we obtain
\begin{eqnarray*}
	& &\frac{1}{\langle\tau_1-\xi_1^2\rangle^{2b-2\rho}}\int_{D_{21}} \frac{d\xi}{\langle\xi\rangle^{2\rho}\langle\tau_1-\xi_1^2+2\xi_1(\xi_1-\xi)\rangle^{4b-1}}\\
	& &\quad \leq \frac{c}{\langle\tau_1-\xi_1^2\rangle^{2b-\rho}}\int_{|\eta|\leq 2|\tau_1-\xi_1^2|} \frac{d\eta}{\langle\eta\rangle^{4b-1}|\xi_1|} \leq c \frac{1}{\langle\tau_1-\xi_1^2\rangle^{2b-\rho}}\frac{1}{\langle\tau_1-\xi_1^2\rangle^{4b-2}},
\end{eqnarray*}
which is bounded provided $b\geq \frac{1}{3}+\frac{\rho}{6}$.

In $D_{22}$, we have $\langle\tau_1-\xi_1^2\rangle\geq \frac{2}{3}|\xi_1||\xi-\xi_1|\geq \frac{2}{3} |\xi_1|(|\xi|-|\xi_1|)\geq\frac{2}{3}|\xi_1|(4|\xi_1|-\xi_1)=2|\xi_1|^2.$ 

Then, by Lemma \ref{Hol}, we obtain
\begin{equation*}
\begin{split}
	 &\frac{1}{\langle\tau_1-\xi_1^2\rangle^{2b}}\!\int_{D_{22}}\! \frac{(|\xi_1||\xi-\xi_1|)^{2\rho}d\xi}{\langle\xi\rangle^{2\rho}\langle\tau_1-\xi_1^2+2\xi_1(\xi_1-\xi)\rangle^{4b-1}} \leq  \frac{c}{\langle\tau_1-\xi_1^2\rangle^{2b}}\!\int_{|\eta|\leq |\tau_1-\xi_1^2|}\!\frac{|\xi_1|^{2\rho}d\eta}{|\xi_1|\langle\eta\rangle^{4b-1}}\\
	 &\quad \leq\frac{c}{\langle\tau_1-\xi_1^2\rangle^{2b-\rho+\frac{1}{2}}}\int_{|\eta|\leq |\tau_1-\xi_1^2|}\frac{d\eta}{\langle\eta\rangle^{4b-1}}\leq \frac{c}{\langle\tau_1-\xi_1^2\rangle^{6b-\frac{3}{2}-\rho}},
	 \end{split}
\end{equation*}
 which is bounded provided $b\geq\frac{1}{4}+\frac{\rho}{6}$.
In $D_{23}$ we use $|\xi_1|^2 \sim|\tau_1-\xi_1^2|$ to obtain

\begin{equation*}
\begin{split}
	 &\frac{1}{\langle\tau_1-\xi_1^2\rangle^{2b-2\rho}}\int_{D_{23}} \frac{d\xi}{\langle\xi\rangle^{2\rho}\langle\tau_1-\xi_1^2+2\xi_1(\xi_1-\xi)\rangle^{4b-1}}\leq \frac{c}{\langle\tau_1-\xi_1^2\rangle^{2b-2\rho}}\int_{|\eta|\leq 2|\tau_1-\xi_1^2|} \frac{d\eta}{|\xi_1|\langle\eta\rangle^{4b-1}}\\
	 &\quad \leq \frac{c}{\langle\tau_1-\xi_1^2\rangle^{2b-2\rho}}\frac{1}{\langle\tau_1-\xi_1^2\rangle^{\frac{1}{2}}}\frac{1}{\langle\tau_1-\xi_1^2\rangle^{4b-2}}\leq \frac{c}{\langle\tau_1-\xi_1^2\rangle^{6b-2\rho-\frac{3}{2}}},
	 \end{split}
\end{equation*} 
which is bounded provided $b\geq \frac{\rho}{3}+\frac{1}{4}$. Since $\rho<\frac{3}{4}$, we can choice $b=b(\rho)$ such that $b<\frac{1}{2}$. 

The inequality \eqref{eq3} in the case $\rho\in (0,\frac{1}{2})$ follows interpolate the cases $\rho=0$ and $\rho\in(\frac{1}{2},\frac{3}{4})$. This finish the proof of the part (a) of the lemma.

Arguing as in the part (a) of proof, to obtain Lemma \ref{estimativan1} (b), it suffices to prove 

\begin{equation}\label{15091}
\left\|\frac{1}{\langle\tau-\xi^2\rangle^{2b}\langle\tau\rangle^{\rho}}\int\int_{A}\frac{(|\xi_1||\xi-\xi_1|)^{2\rho}d\xi_1d\tau_1}{\langle\tau_1-\xi_1^2\rangle^{2b}\langle\tau-\tau_1-(\xi-\xi_1)^2\rangle^{2b}} \right\|_{L^{\infty}_{\xi,\tau}}\leq c,
\end{equation}
where $A(\xi,\tau)=\{(\xi_1,\tau_1)\in\mathbb{R}^2;\,\max\{|\tau-\xi^2|,|\tau_1-\xi_1^2|,|\tau-\tau_1-(\xi-\xi_1)^2|\}=|\tau-\xi^2|\},$
and
\begin{equation}\label{15092}
\left\|\frac{1}{\langle\tau_1-\xi_1^2\rangle^{2b}}\int\int_{B}\frac{(|\xi_1||\xi-\xi_1|)^{2\rho}d\xi d\tau}{\langle\tau\rangle^{\rho}\langle\tau-\xi^2\rangle^{2b}\langle\tau-\tau_1-(\xi-\xi_1)^2\rangle^{2b}}\right\|_{L_{\xi_1,\tau_1}^{\infty}}\leq c,
\end{equation}
where $B=B(\xi_1,\tau_1)=\{(\xi,\tau)\in\mathbb{R}^2;\, \max\{|\tau-\xi^2|,|\tau_1-\xi_1^2|,|\tau-\tau_1-(\xi-\xi_1)^2|\}=|\tau_1-\xi_1^2|\}.$

The inequality \eqref{15091} can be obtained in the same way of the estimate \eqref{eq2}. To estimate \eqref{15092}, we can assume $10|\tau|<|\xi|^2$, since otherwise, the bound reduces to the inequality \eqref{eq3}. In this case we have $\langle\tau-\xi^2 \rangle\sim \langle\xi^2\rangle$. Thus, we need to show
\begin{equation}\label{15093}
\frac{1}{\langle\tau_1-\xi_1^2\rangle^b}\int\int_{B}\frac{(|\xi_1||\xi-\xi_1|)^{2\rho}d\xi d\tau}{\langle\tau\rangle^{\rho}\langle\xi^2\rangle^{2b}\langle\tau-\tau_1-(\xi-\xi_1)^2\rangle^{2b}}\leq c.
\end{equation}
Using the definition of $B$ and \eqref{algebra}, we see that $|\xi_1||\xi-\xi_1|\leq \langle \tau_1-\xi_1^2\rangle$, it follows that
\begin{equation}\label{15095}
\frac{(|\xi_1||\xi-\xi_1|)^{\frac{4}{3}\rho}}{\langle\tau_1-\xi_1^2\rangle^{2b}}\leq \frac{1}{\langle\tau_1-\xi_1^2\rangle^{2b-\frac{4\rho}{3}}}\leq c,
\end{equation}  
where we have used that $\rho<\frac{3}{2}b$. Using \eqref{15095} and Lemma \ref{lemagtv}, we see that the left hand side of \eqref{15093} is bounded by
\begin{equation*}
	 c \int\int_{B}\frac{(|\xi_1||\xi-\xi_1|)^{\frac{2}{3}\rho}d\xi d\tau}{\langle\tau\rangle^{\rho}\langle\xi^2\rangle^{2b}\langle\tau-\tau_1-(\xi-\xi_1)^2\rangle^{2b}}\leq c \int_{\{\xi\in\R;\ \exists\ \tau; (\xi,\tau)\in B\}}\frac{(|\xi_1||\xi-\xi_1|)^{\frac{2}{3}\rho}d\xi}{\langle\tau_1+(\xi-\xi_1)^2\rangle^{\rho+2b-1}\langle\xi^2\rangle^{2b}}.
\end{equation*}
To estimate the last integral, we divide the integration domain in two parts, i.e., $D_1=\{\xi\in \mathbb{R};|\xi_1|>2|\xi-\xi_1|\ \text{or}\ |\xi-\xi_1|>2|\xi_1|\}$ and $D_2=\{\xi\in \mathbb{R};\frac{1}{2}|\xi-\xi_1|\leq |\xi_1|\leq 2|\xi-\xi_1|\}.$

In $D_1$ we have  $\frac{(|\xi_1||\xi-\xi_1|)^{\frac{2}{3}\rho}}{\langle\xi\rangle^{4b}}\leq \frac{c}{\langle\xi\rangle^{4b-\frac{4}{3}\rho}} \leq c
$, for $\rho<3b$. 
Thus, Lemma \ref{lemanovo} implies 

\begin{equation*}
\int_{D_1}\frac{(|\xi_1||\xi-\xi_1|)^{\frac{2}{3}\rho}d\xi}{\langle\tau_1+(\xi-\xi_1)^2\rangle^{\rho+2b-1}\langle\xi^2\rangle^{2b}}\leq  c\int\frac{d\xi}{\langle\tau_1+(\xi-\xi_1)^2\rangle^{\rho+2b-1}} \leq c,
\end{equation*}
where we have used that $b>\frac{3}{4}-\frac{\rho}{2}$.

Now we subdivide $D_2$ in two pieces, i.e., $D_{21}\cup D_{22}$, where $D_{21}=\{\xi\in D_2;\ 10|\xi_1||\xi-\xi_1|\leq |\tau_1-\xi_1^2|\}\ \text{and}\ 
\ D_{22}=\{\xi\in D_2;\  |\xi_1||\xi-\xi_1|\sim |\tau_1-\xi_1^2|\}$.

In $D_{21}$ we have $|\tau_1-\xi_1^2-2\xi_1(\xi-\xi_1)|\sim |\tau_1-\xi_1^2|$. Then, by Lemma \ref{lemagtv},
 \begin{equation*}
\begin{split}
	 &\frac{c}{\langle\tau_1-\xi_1^2\rangle^{2b}}\int\int_{B\cap D_{21}\times \mathbb{R} }\frac{(|\xi_1||\xi-\xi_1|)^{2\rho}d\xi d\tau}{\langle\tau\rangle^{\rho}\langle\xi^2\rangle^{2b}\langle\tau-\tau_1-(\xi-\xi_1)^2\rangle^{2b}}\\
	 &\quad\leq \frac{c}{\langle\tau_1-\xi_1^2\rangle^{2b-2\rho}}\int\int_{B\cap D_{21}\times \mathbb{R}}\frac{d\xi d\tau}{\langle \tau+\xi^2\rangle^{2b}\langle\tau-\tau_1-(\xi-\xi_1)^2\rangle^{2b}}\\
	 & \quad\leq \frac{c}{\langle\tau_1-\xi_1^2\rangle^{2b-2\rho}}\int_{D_{21}}\frac{d\xi}{\langle \tau_1-\xi_1^2-2\xi_1(\xi-\xi_1)\rangle^{4b-1}}\leq  \frac{c}{\langle\tau_1-\xi_1^2\rangle^{2b-2\rho}}\int_{|\xi|\leq 3|\xi_1|}\frac{d\xi}{\langle \tau_1-\xi_1^2\rangle^{4b-1}}\\
	&\quad \leq c  \int_{|\xi|\leq 3|\xi_1|}\frac{d\xi}{\langle \tau_1-\xi_1^2\rangle^{6b-1-2\rho}}\leq\frac{c|\xi_1|}{\langle\tau_1-\xi_1^2\rangle^{6b-1-2\rho}}\leq \frac{c}{\langle\tau_1-\xi_1^2\rangle^{6b-1-2\rho-\frac{1}{2}}},
	\end{split}
\end{equation*}
which is bounded provided $\rho\leq 3b-\frac{3}{4}$. Since $\rho<\frac{3}{4}$ we can choice $b$ depending of $\rho$, such that $b<\frac{1}{2}$. In $D_{22}$ we have $|\xi_1|^2\sim |\tau_1-\xi_1^2|$, and consequently
\begin{equation*}
\begin{split}
	 &\frac{1}{\langle\tau_1-\xi_1^2\rangle^{2b}}\!\!\int_{D_{22}}\!\! \frac{(|\xi_1||\xi-\xi_1|)^{2\rho}d\xi}{\langle\tau_1-\xi_1^2+2\xi_1(\xi_1-\xi)\rangle^{4b-1}}\leq \frac{c}{\langle\tau_1-\xi_1^2\rangle^{2b}}\int_{D_{22}} \frac{|\xi_1|^{4\rho}d\xi}{\langle\tau_1-\xi_1^2+2\xi_1(\xi_1-\xi)\rangle^{4b-1}}\\
	 &\quad\leq \frac{c}{\langle\tau_1-\xi_1^2\rangle^{2b-2\rho}}\int_{|\eta|\leq 2|\tau_1-\xi_1^2|} \frac{d\eta}{|\xi_1|\langle\eta\rangle^{4b-1}}\leq \frac{c}{\langle\tau_1-\xi_1^2\rangle^{6b-2\rho-\frac{3}{2}}},
	 \end{split}
\end{equation*} 
which is bounded provided $\rho\leq 3b-\frac{3}{4}$, and hence the proof of Lemma \ref{estimativan1} is completed.
\subsection{Proof of Lemma \ref{estimativan2}}
Let $\rho=-s\in [0,\frac{3}{4})$. For $u,v\in X^{s,b}=X^{-\rho,b}$ define
$$g(\xi,\tau)=\langle\tau-\xi^2\rangle^b\langle\xi\rangle^{-\rho}\hat{u}(\xi,\tau),\ h(\xi,\tau)=\langle\tau-\xi^2\rangle^b\langle\xi\rangle^{-\rho}\hat{v}(\xi,\tau).$$Then $\|g\|_{L_{\xi}^2L_{\tau}^2}=\|u\|_{X^{s,b}},\ \|h\|_{L_{\xi}^2L_{\tau}^2}=\|v\|_{X^{s,b}}$. Using  $\hat{\overline{v}}(\xi,\tau)=\frac{\langle\xi\rangle^{\rho}}{\langle\tau+\xi^2\rangle^b}\overline{h}(-\xi,-\tau)$, we have
\begin{equation*}
	\|N_2(u,\overline{v})\|_{X^{s,-b}}=\sup_{\|\phi\|_{L^2}\leq 1}W(g,h,\phi),
\end{equation*}
where
\begin{equation*}
W(g,h,\phi):=\int_{\mathbb{R}^4}\frac{\langle \xi\rangle^s}{\langle\tau-\xi^2\rangle^{b}}\frac{\overline{h}(-\xi_1,-\tau_1)}{\langle \xi_1\rangle^{s}\langle\tau_1+\xi_1^2\rangle^b}\frac{g(\xi-\xi_1,\tau-\tau_1)}{\langle \xi-\xi_1\rangle^{s}\langle\tau-\tau_1-(\xi-\xi_1)^2\rangle^b}\phi(\xi,\tau)d\xi_1d\tau_1d\xi d\tau. 
\end{equation*}
Initially, we treat the case $\rho=0$. Integrating first in $\xi,\tau$, changing of variables $\tau_2=\tau-\tau_1$, $\xi_2=\xi-\xi_1$, and using Cauchy-Schwarz and H\"older inequalities, we obtain
\begin{equation*}
\begin{split}
W^2(g,h,\phi) &\leq c \|\phi\|_{L^2}^2\|g\|_{L^2}^2\|h\|_{L^2}^2\\
&\quad\times \left\|\frac{1}{\langle \tau_2-\xi_2^2\rangle^{2b}}\int \int \frac{d\xi d\tau}{\langle \tau-\xi^2\rangle^{2b}\langle \tau-\tau_2+(\xi-\xi_2)^2\rangle^{2b}}\right\|_{L_{\xi_2,\tau_2}^{\infty}}.
\end{split}
\end{equation*}
Thus we need to show 
\begin{equation}\label{mc1}
\sup_{(\xi_2,\tau_2)\in \mathbb{R}^2}\frac{1}{\langle \tau_2-\xi_2^2\rangle^{2b}}\int \int \frac{d\xi d\tau}{\langle \tau-\xi^2\rangle^{2b}\langle \tau-\tau_2+(\xi-\xi_2)^2\rangle^{2b}}\leq c.
\end{equation}
Applying Lemmas \ref{lemagtv} and \ref{lemanovo}, we see that the left hand side of \eqref{mc1} is bounded by
\begin{equation*}
c \int \frac{d\xi}{\langle 2\xi^2-2\xi\xi_2-\tau_2\xi_2^2\rangle^{4b-1}}\leq c,
\end{equation*}
where we have used that $\frac{3}{8}<b<\frac{1}{2}$.

Now we treat the case $\rho\in(0,\frac{1}{4})$. In this case we shall assume that $|\xi_1|\geq 10$ and $|\xi-\xi_1|\geq 10$, since otherwise, the bound reduces to the case $\rho=0$. We write 
\begin{eqnarray*}
	W(g,h,\phi)&=&\int_{\mathbb{R}^4}\frac{\langle \xi\rangle^s}{\langle\tau-\xi^2\rangle^{b}}\frac{\overline{h}(-\xi_1,-\tau_1)}{\langle \xi_1\rangle^{s}\langle\tau_1+\xi_1^2\rangle^b}\frac{g(\xi-\xi_1,\tau-\tau_1)}{\langle \xi-\xi_1\rangle^{s}\langle\tau-\tau_1-(\xi-\xi_1)^2\rangle^b}\phi(\xi,\tau)d\xi_1d\tau_1d\xi d\tau  \nonumber \\
	&=&\int_{\mathcal{R}_1}+\int_{\mathcal{R}_2}+\int_{\mathcal{R}_3}+\int_{\mathcal{R}_4}+\int_{\mathcal{R}_5}:= W_1+W_2+W_3+W_4+W_5,\nonumber
\end{eqnarray*}
where 
\begin{equation*}
\mathcal{R}_1=\{(\xi,\xi_1,\tau,\tau_1)\in\mathbb{R}^4;\, |\xi_1|\geq 10, |\xi|\leq 1 \},
\end{equation*}
\begin{equation*}
\begin{split}
&\mathcal{R}_2=\big\{(\xi,\xi_1,\tau,\tau_1)\in\mathbb{R}^4;\, |\xi_1|\geq 10, |\xi|\geq 1,\\
&\ \ \ \ \ \ \ \ \ \ \ \ \ \ \ \ \ \ \ \ \ \ \ \ \ \ \ \  \ \ \ \ \ \ \ \ \ \ \  \max\{|\tau-\xi^2|,|\tau_1+\xi_1^2|, |\tau-\tau_1-(\xi-\xi_1)^2|\}=|\tau_1+\xi_1^2| \big\},
\end{split}
\end{equation*}

\begin{equation*}
\begin{split}
&\mathcal{R}_3=\big\{(\xi,\xi_1,\tau,\tau_1)\in\mathbb{R}^4;\, |\xi_1|\geq 10, 1<|\xi|\!<|\xi_1|,\\
&\ \ \ \ \ \ \ \ \ \ \ \ \ \ \ \ \ \ \ \ \ \ \ \ \ \ \ \  \ \ \ \ \ \ \ \ \ \ \ \max\{|\tau-\xi^2|,|\tau_1+\xi_1^2|, |\tau-\tau_1-(\xi-\xi_1)^2|\}=|\tau-\xi^2|\big \},
\end{split}
\end{equation*}
\begin{equation*}
\begin{split}
&\mathcal{R}_4=\big\{(\xi,\xi_1,\tau,\tau_1)\in\mathbb{R}^4;\, |\xi_1|\geq 10, |\xi|\geq|\xi_1|,\\
&\ \ \ \ \ \ \ \ \ \ \ \ \ \ \ \ \ \ \ \ \ \ \ \ \ \ \ \  \ \ \ \ \ \ \ \ \ \ \  \max\{|\tau-\xi^2|,|\tau_1+\xi_1^2|, |\tau-\tau_1-(\xi-\xi_1)^2|\}=|\tau-\xi^2| \big\},\\
&\mathcal{R}_5=\big\{(\xi,\xi_1,\tau,\tau_1)\in\mathbb{R}^4;\, |\xi|\geq 1, |\xi_1|\geq 10,\\&\ \ \ \ \ \ \ \ \ \ \ \ \ \ \ \ \ \ \ \ \ \ \ \ \ \ \ \  \ \ \ \ \ \ \max\{|\tau-\xi^2|,|\tau_1+\xi_1^2|, |\tau-\tau_1-(\xi-\xi_1)^2|\}=|\tau-\tau_1-(\xi-\xi_1)^2| \big\}.
\end{split}
\end{equation*}
Thus, we need to show the following estimates

\begin{equation}\label{18091}
\sup_{|\xi_1|>10,\ \tau_1\in \mathbb{R}}\frac{1}{\langle \tau_1+\xi_1^2\rangle^{2b}}\int\int_{|\xi|<1}\frac{(|\xi_1||\xi-\xi_1|)^{2\rho}d\xi d\tau}{\langle\xi\rangle^{2\rho}\langle\tau-\tau_1-(\xi-\xi_1)^2\rangle^{2b}\langle\tau-\xi^2\rangle^{2b}}\leq c,
\end{equation}
\begin{equation}\label{faltava}
\sup_{|\xi_1|>10,\ \tau_1\in \mathbb{R}}\frac{1}{\langle \tau_1+\xi_1^2\rangle^{2b}}\int\int\frac{\chi_{A(\xi_1,\tau_1)}(|\xi_1||\xi-\xi_1|)^{2\rho}d\xi d\tau}{\langle\xi\rangle^{2\rho}\langle\tau-\tau_1-(\xi-\xi_1)^2\rangle^{2b}\langle\tau-\xi^2\rangle^{2b}}\leq c,
\end{equation}
\begin{equation}\label{18092}
\sup_{|\xi|>1, \tau\in \mathbb{R}}\frac{1}{\langle\tau-\xi^2\rangle^{2b}\langle\xi\rangle^{2\rho}}\int\int\frac{\chi_{B(\xi,\tau)}(|\xi_1||\xi-\xi_1|)^{2\rho}d\xi_1d\tau_1}{\langle\tau_1+\xi_1^2\rangle^{2b}\langle\tau-\tau_1-(\xi-\xi_1)^2\rangle^{2b}}\leq c,
\end{equation}
\begin{equation}\label{18093}
\sup_{|\xi|>1, \tau\in \mathbb{R}}\frac{1}{\langle\tau-\xi^2\rangle^{2b}\langle\xi\rangle^{2\rho}}\int\int\frac{\chi_{C(\xi,\tau)}(|\xi_1||\xi-\xi_1|)^{2\rho}d\xi_1d\tau_1}{\langle\tau_1+\xi_1^2\rangle^{2b}\langle\tau-\tau_1-(\xi-\xi_1)^2\rangle^{2b}}\leq c,
\end{equation}
\begin{equation}\label{18094}
\sup_{|\xi_1|>1,\tau_1\in\mathbb{R}}\frac{1}{\langle\tau_1-\xi_1^2\rangle^{2b}}\int\int\frac{\chi_{D(\xi_1,\tau_1)}(|\xi_1||\xi-\xi_1|)^{2\rho}d\tau d\xi}{\langle \xi\rangle^{2\rho}\langle\tau-\xi^2\rangle^{2b}\langle\tau-\tau_1+(\xi-\xi_1)^2\rangle^{2b}}\leq c,
\end{equation}
where
\begin{equation*}
\begin{split}
&A(\xi_1,\tau_1)=\{(\xi,\tau)\in\mathbb{R}^2;\,|\xi|\geq 1,\, \max\{|\tau-\xi^2|,|\tau_1+\xi_1^2|,|\tau-\tau_1-(\xi-\xi_1)^2|\}=|\tau_1+\xi_1^2|\},\\
& B(\xi,\tau)=\big\{(\xi_1,\tau_1)\in\mathbb{R}^2;\,|\xi_1|>10, |\xi|\leq |\xi_1|,\\
&\ \ \ \ \ \ \ \ \ \ \ \ \ \ \ \ \ \ \ \ \ \ \ \ \ \ \ \ \ \ \ \ \ \  \ \ \ \ \ 
\max\{|\tau-\xi^2|,|\tau_1+\xi_1^2|,|\tau-\tau_1-(\xi-\xi_1)^2|\}=|\tau-\xi^2|\big\},\\
&C(\xi,\tau)=\big\{(\xi_1,\tau_1)\in\mathbb{R}^2;\, |\xi_1|>10,\ |\xi| \geq |\xi_1|,\\
&\ \ \ \ \ \ \ \ \ \ \ \ \ \ \ \ \ \ \ \ \ \ \ \ \ \ \ \ \ \ \ \ \ \  \ \ \ \ \  \max\{|\tau-\xi^2|,|\tau_1+\xi_1^2|,|\tau-\tau_1-(\xi-\xi_1)^2|\}=|\tau-\xi^2|\big\},
\end{split}
\end{equation*}
\begin{equation*}
\begin{split}
	&D(\tau_1,\xi_1)=\big\{(\xi,\tau)\in \mathbb{R}^2;\, |\xi|\!\geq\! 1,\ |\xi-\xi_1|\!\geq\! 10,\\
&\ \ \ \ \ \ \ \ \ \ \ \ \ \ \ \ \ \ \ \ \ \ \ \ \ \ \ \ \ \ \ \ \ \  \ \ \ \ \ 	\max\{|\tau-\xi^2|,|\tau_1-\xi_1^2|,|\tau-\tau_1+(\xi-\xi_1)^2|\}\!=\!|\tau_1-\xi_1^2|\big\}.
	\end{split}
	\end{equation*}
First, we obtain \eqref{18091}.  Using Lemma \ref{lemagtv}, we see that the left hand side of \ref{lemagtv} is bounded by

\begin{eqnarray}
		& &\sup_{|\xi_1|>10,\ \tau_1\in \mathbb{R}} \frac{c\chi_{\{|\xi_1|>10\}}|\xi_1|^{4\rho}}{\langle \tau_1+\xi_1^2\rangle^{2b}}\int\int_{|\xi|<1}\frac{d\xi d\tau}{\langle\tau-\tau_1-(\xi-\xi_1)^2\rangle^{2b}\langle\tau-\xi^2\rangle^{2b}}\nonumber\\
	& &\quad \leq \sup_{|\xi_1|>10,\ \tau_1\in \mathbb{R}} \frac{c\chi_{\{|\xi_1|>10\}}|\xi_1|^{4\rho}}{\langle \tau_1+\xi_1^2\rangle^{2b}}\int_{|\xi|<1}\frac{d\xi}{\langle\tau_1+\xi_1^2-2\xi\xi_1\rangle^{4b-1}}.\label{0212}
\end{eqnarray}
Changing variables $\eta=\tau_1+\xi_1^2-2\xi\xi_1$, then $d\eta=-2\xi_1d\xi$ and $|\eta|\leq |\tau_1+\xi_1^2|+2|\xi_1|$. Substituting into \eqref{0212}, we see that \eqref{0212} is controlled by
\begin{equation*}
 \sup_{|\xi_1|>10,\ \tau_1\in \mathbb{R}}\frac{c\chi_{\{|\xi_1|>10\}}|\xi_1|^{4\rho}}{\langle \tau_1+\xi_1^2\rangle^{2b}|\xi_1|}\int_{|\eta|\leq |\tau_1+\xi_1^2|+2|\xi_1|}\frac{d\eta}{\langle\eta\rangle^{4b-1}}\\
\leq c\frac{\chi_{\{|\xi_1|>10\}}|\xi_1|^{4\rho}}{\langle \tau_1+\xi_1^2\rangle^{2b}|\xi_1|}(|\tau_1+\xi_1^2|^{2-4b}+|\xi_1|^{2-4b}),
\end{equation*}
which is bounded provided $b>\frac{1}{3}$ and $\rho\leq b-\frac{1}{4}$.

To obtain \eqref{faltava}, we use the following algebraic relation
\begin{equation}\label{algb}
\tau-\tau_1-(\xi-\xi_1)^2+(\tau_1+\xi_1^2)-(\tau-\xi^2)=2\xi\xi_1.
\end{equation}
By Lemma \ref{lemagtv}, we have that the left hand side of \eqref{faltava} is bounded by
\begin{equation}\label{novamente}
\sup_{|\xi_1|>10,\ \tau_1\in \mathbb{R}}\frac{c|\xi_1|^{4\rho}}{\langle\tau_1+\xi_1^2\rangle^{2b}}\int\frac{d\xi}{\langle\tau_1+\xi_1^2-2\xi\xi_1\rangle^{4b-1}}.
\end{equation}
Set $\eta=\tau+\xi_1^2-2\xi\xi_1$. Then $d\eta=-2\xi_1d\xi$. By \eqref{algb}, we have $|\eta|\leq c |\tau_1+\xi_1^2|$ in $A(\xi_1,\tau_1)$. Substituting into \eqref{novamente}, we obtain that \eqref{novamente} is bounded by $
\frac{c|\xi_1|^{4\rho-1}}{\langle\tau_1+\xi_1^2\rangle^{6b-2}},
$ which is bounded provided $b>\frac{1}{3}$ and $\rho\leq \frac{1}{4}$.
Now we obtain \eqref{18092}. By Lemma \ref{lemagtv},we see that left hand side of \eqref{18092} is bounded by
\begin{eqnarray}
& & \sup_{|\xi|>1, \tau\in \mathbb{R}}\frac{c\chi_{\{|\xi|>1\}}}{\langle\tau-\xi^2\rangle^{2b}\langle\xi\rangle^{2\rho}}\int\frac{\chi_{\{\xi_1;\exists \tau_1; (\xi_1,\tau_1)\in B(\xi,\tau)\}}\langle\xi_1\rangle^{4\rho}d\xi_1}{\langle\tau-\xi^2+2\xi\xi_1\rangle^{4b-1}}\nonumber\\
 & &\quad \leq \sup_{|\xi|>1, \tau\in \mathbb{R}}\frac{c\chi_{\{|\xi|>1\}}}{\langle\tau-\xi^2\rangle^{2b-4\rho}\langle\xi\rangle^{2\rho}}\int\frac{\chi_{\{\xi_1;\exists \tau_1; (\xi_1,\tau_1)\in B(\xi,\tau)\}}d\xi_1}{\langle\tau-\xi^2+2\xi\xi_1\rangle^{4b-1}}.\label{rio1}
\end{eqnarray}
Set $\eta=\tau-\xi^2+2\xi\xi_1$, then $d\eta=2\xi d \xi_1$ and $|\eta|\leq 2|\tau-\xi^2|$. Substituting into \eqref{rio1}, we obtain that \eqref{rio1} is bounded by

\begin{equation}
 \sup_{|\xi|>1, \tau\in \mathbb{R}}\frac{c\chi_{\{|\xi|>1\}}}{\langle\tau-\xi^2\rangle^{2b-4\rho}\langle\xi\rangle^{2\rho}|\xi|}\int_{|\eta|<2|\tau-\xi^2|}\frac{d\eta}{\langle\eta\rangle^{4b-1}}\leq \frac{\sup_{|\xi|>1, \tau\in \mathbb{R}}c}{\langle\tau-\xi^2\rangle^{2b-4\rho}\langle\tau-\xi^2\rangle^{4b-2}}.
\end{equation}

This expression is bounded, provided $\rho\leq\frac{3}{2}b-\frac{1}{2}$.

Now we estimate \eqref{18093}. By Lemma \ref{lemagtv} we see that left hand side of \eqref{18093} is bounded by
\begin{eqnarray}
& &\sup_{|\xi|>1, \tau\in \mathbb{R}}\frac{\chi_{\{|\xi|>1\}}}{\langle\tau-\xi^2\rangle^{2b}\langle\xi\rangle^{2\rho}}\int\int\frac{\chi_{C(\xi,\tau)}(|\xi_1||\xi-\xi_1|)^{\rho}(|\xi_1||\xi-\xi_1|)^{\rho}d\xi_1d\tau_1}{\langle\tau_1+\xi_1^2\rangle^{2b}\langle\tau-\tau_1-(\xi-\xi_1)^2\rangle^{2b}}\nonumber\\
& &\quad \leq\sup_{|\xi|>1, \tau\in \mathbb{R}} \frac{c\chi_{\{|\xi|>1\}}\langle\xi\rangle^{2\rho}}{\langle\xi\rangle^{2\rho}\langle\tau-\xi^2\rangle^{2b-\rho}}\int\frac{\chi_{\{\xi_1;\exists \tau_1; (\xi_1,\tau_1)\in C(\xi,\tau)\}d\xi_1}}{\langle\tau-\xi^2+2\xi\xi_1\rangle^{4b-1}}.\label{rio2}
\end{eqnarray}
Set $\eta=\tau-\xi^2+2\xi\xi_1$, then $d\eta=2\xi d \xi_1$ and $$|\eta|=|\tau_1+\xi_1^2+\tau-\tau_1-(\xi-\xi_1)^2|\leq 2 |\tau-\xi^2|,\  \text{in}\  B(\xi,\tau)_{\xi_1}.$$ Substituting into \eqref{rio2}, we see that \eqref{rio2} is bounded by 
\begin{equation*}
	\sup_{|\xi|>1, \tau\in \mathbb{R}}\frac{c\chi_{\{|\xi|>1\}}}{\langle\xi\rangle\langle\tau-\xi^2\rangle^{2b-\rho}}\int_{|\eta|\leq 2|\tau-\xi^2|}\frac{d\eta}{\langle\eta\rangle^{4b-1}} \leq   \sup_{|\xi|>1, \tau\in \mathbb{R}}\frac{c}{\langle \tau-\xi^2\rangle^{6b-\rho-2}},
\end{equation*}
which is bounded provided  and $\rho\leq 6b-2$. 

Finally, we shall obtain \eqref{18094}. By Lemma \ref{lemagtv} we see that left hand side of\eqref{18094} is bounded by
\begin{equation*}
 \sup_{|\xi_1|>10,\tau_1\in\mathbb{R}}\frac{c\chi_{\{|\xi_1|\geq 10\}}}{\langle\tau_1-\xi_1^2\rangle^{2b}}\int_{D(\xi_1,\tau_1)_{\xi}} \frac{(|\xi_1||\xi-\xi_1|)^{2\rho}d\xi}{\langle \xi\rangle^{2\rho}\langle \tau_1-\xi_1^2-2\xi\xi_2\rangle^{4b-1}},
\end{equation*}
where $D(\xi_1,\tau_1)_{\xi}=\{\xi;\ \exists \  \tau; (\xi,\tau)\in D(\xi_1,\tau_1)\}$.

Now we subdivide $D(\xi_1,\tau_1)_{\xi}$ into two pieces, i.e., $D_1(\xi_1,\tau_1)=\{\xi\in D(\xi_1,\tau_1)_{\xi};\, |\xi_1|\leq 100|\xi| \}$ and $D_2(\xi_1,\tau_1)=\{\xi\in (D(\xi_1,\tau_1))_{\xi};\, |\xi_1|> 100|\xi|,\ |\xi_1|\leq 500|\tau_1-\xi_1^2| \}$.

In $D_1(\xi_1,\tau_1)$ we use the algebraic relation
\begin{equation*}
(\tau-\xi^2)-(\tau_1-\xi_1^2)-(\tau-\tau_1+(\xi-\xi_1^2))=-2\xi(\xi-\xi_1)
\end{equation*}
and Lemma \ref{lemagtv} to obtain

\begin{equation*}
\sup_{|\xi_1|>10,\tau_1\in\mathbb{R}}\frac{\chi_{\{|\xi_1|>10\}}}{\langle\tau_1-\xi_1^2\rangle^{2b}}\int \frac{\chi_{D_1}(|\xi_1||\xi-\xi_1|)^{2\rho}d\xi}{\langle \xi\rangle^{2\rho}\langle \tau_1-\xi_1^2-2\xi\xi_2\rangle^{4b-1}} \leq \sup_{|\xi_1|>10,\tau_1\in\mathbb{R}}c \int \frac{\chi_{D_1}d\xi}{\langle \tau_1-\xi_1^2-2\xi\xi_2\rangle^{4b-1}},
\end{equation*}
which is bounded provided $b>\frac{3}{8}$ and $\rho\leq b$.

In $D_2$ we have $|\xi||\xi-\xi_1|\leq c|\xi_1|^2\leq c|\tau_1-\xi_1^2|^2$, it follows that the integral in this region can be controlled by
\begin{equation}\label{2209}
\sup_{|\xi_1|>10,\tau_1\in\mathbb{R}}\frac{c|\xi_1|^{4\rho}\chi_{|\xi_1|>10}}{\langle\tau_1-\xi_1^2\rangle^{2b}}\int \frac{\chi_{D_2}d\xi}{\langle \xi\rangle^{2\rho}\langle \tau_1-\xi_1^2-2\xi\xi_2\rangle^{4b-1}},
\end{equation}
By Lemma \ref{lemanovo}, the expression \eqref{2209} is bounded if $b>\frac{3}{8}$ and $\rho<\frac{b}{2}$, and hence the proof of Lemma \ref{estimativan2} is completed.
\subsection{Proof of Lemma \ref{estimativan3}}
To get the part (a) it suffices to show that
\begin{equation}\label{23092}
\left\|\frac{1}{\langle\tau-\xi^2\rangle^{2b}}\int\int\frac{d\xi_1d\tau_1}{\langle\tau_1+\xi_1^2\rangle^{2b}\langle\tau-\tau_1+(\xi-\xi_1)^2\rangle^{2b}}\right\|_{L_{\xi,\tau}^{\infty}}\leq c,
\end{equation}
\begin{equation}\label{23093}
\left\|\frac{1}{\langle\tau-\xi^2\rangle^{2b}\langle \xi\rangle^{2\rho}}\int\int_{A_3}\frac{ (|\xi_1||\xi-\xi_1|)^{2\rho}d\xi_1 d\tau_1}{\langle\tau_1+\xi_1^2\rangle^{2b}\langle\tau-\tau_1+(\xi-\xi_1)^2\rangle^{2b}}\right\|_{L_{\xi,\tau}^{\infty}}\leq  c,
\end{equation}
\begin{equation}\label{23094}
\left\|\frac{\chi_{\{|\xi_1|>10\}}}{\langle\tau_1+\xi_1^2\rangle^{2b}} \int\int_{B_3}\frac{(|\xi_1||\xi-\xi_1|)^{2\rho}d\tau d\xi}{\langle\xi\rangle^{2\rho}\langle\tau-\xi^2\rangle^{2b}\langle\tau-\tau_1+(\xi-\xi_1)^2\rangle^{2b}}\right\|_{L_{\xi_1,\tau_1}^{\infty}}\leq c,
\end{equation}
for $\rho\in (\frac{1}{2},\frac{3}{4})$, where 
\begin{equation*}
A_3=A_3(\xi,\tau)=\{(\xi_1,\tau_1)\in \R^2;\,\max \{|\tau-\tau_1+(\xi-\xi_1)^2|,|\tau_1+\xi_1^2|,|\tau-\xi^2|\}= |\tau-\xi^2|  \},
\end{equation*}
\begin{equation*}
B_3=B_3(\xi_1,\tau_1)=\{ (\xi,\tau)\in\mathbb{R}^2;\, \max \{|\tau-\tau_1+(\xi-\xi_1)^2|,|\tau_1+\xi_1^2|, |\tau-\xi^2|\}=|\tau_1+\xi_1^2| \}.
\end{equation*}

The estimate \eqref{23092} can be obtained by using the same argument given in the estimate \eqref{01121} in the proof of Lemma \ref{objetivo} (part (a)). The estimate \eqref{23093} and \eqref{23094} can be obtained by following the ideas used in the estimates \eqref{eq2} and \eqref{eq3} in the proof of Lemma \ref{objetivo} (part (a)) and by using the following algebraic relation 

\begin{equation}\label{2901}
\tau-\tau_1+(\xi-\xi_1)^2+\tau_1+\xi_1^2-(\tau-\xi^2)=(\xi-\xi_1)^2+\xi_1^2+\xi^2.
\end{equation}
To obtain the part (b) it suffices to prove that
\begin{equation}\label{1442}
\left\|\frac{\chi_{\{10|\tau|\leq |\xi|^2\}}}{\langle\tau-\xi^2\rangle^{2b}\langle \tau\rangle^{\rho}}\int\int_{A_3}\frac{ (|\xi_1||\xi-\xi_1|)^{2\rho}d\tau_1 d\xi_1}{\langle\tau_1+\xi_1^2\rangle^{2b}\langle\tau-\tau_1+(\xi-\xi_1)^2\rangle^{2b}}\right\|_{L_{\xi,\tau}^{\infty}}\leq c,
\end{equation}
\begin{equation}\label{1443}
	\left\|\frac{1}{\langle\tau+\xi_1^2\rangle^{2b}}\int\int_{B_3}\frac{\chi_{\{10|\tau|\leq |\xi|^2\}}(|\xi_1||\xi-\xi_1|)^{2\rho}d\tau d\xi}{\langle\tau\rangle^{\rho}\langle \tau-\xi^2\rangle^{2b}\langle \tau-\tau_1+(\xi-\xi_1)^2\rangle^{2b}}\right\|_{L_{\xi_1,\tau_1}^{\infty}}\leq c,
\end{equation}
where $A_3=A_3(\xi,\tau)=\{ (\xi_1,\tau_1)\in\mathbb{R}^2;\, \max \{|\tau-\tau_1+(\xi-\xi_1)^2|,|\tau_1+\xi_1^2|, |\tau-\xi^2|\}=|\tau-\xi^2| \}$ and
$B_3=B_3(\xi_1,\tau_1)=\{ (\xi,\tau)\in\mathbb{R}^2;\, \max \{|\tau-\tau_1+(\xi-\xi_1)^2|,|\tau_1+\xi_1^2|, |\tau-\xi^2|\}=|\tau_1+\xi_1^2| \}$.

These estimates can be obtained by using a similar argument to that described in the proof of Lemma \ref{objetivo} (part (b)). The detailed computation is omitted here.
\section{Proof of Theorem \ref{teorema}}\label{section8}
We will prove Theorem \ref{teorema} for the nonlinearity $N_1(u,\overline{u})=u^2$, the proofs for the nonlinearities $N_2(u,\overline{u})=|u|^2$ and $N_3(u,\overline{u})=\overline{u}^2$ follow the same ideas, by using Lemmas \ref{estimativan2} and \ref{estimativan3}. 
 
Using a scaling argument, we can assume 
$$\|u_0\|_{H^s(\mathbb{R}^+)}+\|f\|_{H^{\frac{2s+1}{4}}(\mathbb{R}^+)}=\delta,$$
for $\delta$ sufficiently small. Indeed, $u$ solves the problem (\ref{SK}) if and only if for $ \lambda>0$, $u_{\lambda}(x,t)=\lambda^2u(\lambda x,\lambda^2 t)$ solves the problem (\ref{SK}) with initial-boundary conditions $u_{\lambda}(x,0)=\lambda^2u_0(\lambda x)$ and $u_{\lambda}(0,t)=\lambda^2f(\lambda^2 t)$.
Thus for $s\leq 0$, $\|u_{\lambda}(\cdot,0)\|_{H^s(\mathbb{R}^+)}\leq \lambda^{3/2+s} \|u(\cdot,0)\|_{H^{s}(\mathbb{R}^+)}$ and $\|u_{\lambda}(0,\cdot)\|_{H^{\frac{2s+1}{4}}(\mathbb{R}^+)}\leq \lambda^{3/2+s} \|u(0,\cdot)\|_{H^{\frac{2s+1}{4}}(\mathbb{R}^+)}.$

Select an extension $\tilde{u}_0\in H^s(\mathbb{R})$ of $u_0$ such that $\|\tilde{u}_0\|_{H^s(\mathbb{R})}\leq c\|u_0\|_{H^s(\mathbb{R}^+)}$. Let $b=b(s)<\frac{1}{2}$ such that the estimates (\ref{objetivo}) and (\ref{objetivoabc}) are valid .

Set $Z$ the Banach Space given by $
	Z=C\big(\mathbb{R}_t;\,H^s(\mathbb{R}_x)\big)\cap C\big(\mathbb{R}_x;\,H^{\frac{2s+1}{4}}(\mathbb{R}_t)\big)\cap X^{s,b}.$

Let
\begin{equation*}
	\Lambda(u)(t)=\psi(t)e^{-it\partial_x^2}\tilde{u}_0+\psi(t)\mathcal{D}(u^2)(t)+\psi(t)\mathcal{L}^{\lambda}h(t),
\end{equation*}
where $h(t)=e^{-i\frac{3\lambda \pi}{4}}\big(\chi_{(0,+\infty)}\psi(t)f(t)-\psi(t)e^{-it\partial_x^2}\tilde{u}_0\big|_{x=0}-\psi(t)\mathcal{D}(u^2)(t)\big|_{x=0}\big)\big|_{(0,+\infty)}$ and $\lambda=\lambda(s)$ is such that $-1<\lambda<\frac{1}{2}$ and $s-\frac{1}{2}<\lambda<s+\frac{1}{2}$.
 
Initially, we show that $\mathcal{L}^{\lambda}h(t)$ is well defined. By Lemmas \ref{sobolevh0}, \ref{sobolev0} , \ref{grupo}, \ref{duhamel}, \ref{objetivo} and \ref{objetivoabc} we have
\begin{eqnarray}
	\|h\|_{H^{\frac{2s+1}{4}}(\mathbb{R}^+)}
	&\leq&\big\|\chi_{(0,+\infty)}(\psi(t)f(t)-\psi(t)e^{-it\partial_x^2}\tilde{u}_0|_{x=0}-\psi(t)\mathcal{D}(u^2)(t)\big|_{x=0})\big\|_{H^{\frac{2s+1}{4}}(\mathbb{R})}\nonumber\\
	&\leq & c \big(\|f\|_{H^{\frac{2s+1}{4}}(\mathbb{R}^+)}+\|\tilde{u}_0\|_{H^s(\mathbb{R})}+\big\|u^2(x,t)\big\|_{X^{s,-b}}+c_1(s)\big\|u^2(x,t)\big\|_{W^{s,-b}}\big)\nonumber\\
	&\leq & c \big(\|f\|_{H^{\frac{2s+1}{4}}(\mathbb{R}^+)}+\|\tilde{u}_0\|_{H^s(\mathbb{R})}+\|u\|_{X^{s,b}}^2\big),\label{demelo}
\end{eqnarray}
where $c_1(s)=0$, if $s\in(-1/2,0]$ e $c_1(s)=1$, if $s\in (-3/4,-1/2]$. 

Since $u\in Z$ we obtain $h\in H^{\frac{2s+1}{4}}(\mathbb{R}^+)$. If $-\frac{3}{4}<s\leq 0$, then $-\frac{1}{8}\leq\frac{2s+1}{4}\leq \frac{1}{4}$, and Lemma \ref{sobolevh0} shows that $h\in H_0^{\frac{2s+1}{4}}(\mathbb{R}^+)$. Thus, $\mathcal{L}^{\lambda}h(t)$ is well defined and $	\|h\|_{H^{\frac{2s+1}{4}}(\mathbb{R}^+)}\sim	\|h\|_{H_0^{\frac{2s+1}{4}}(\mathbb{R}^+)}$.

Our goal is to show that $\Lambda$ defines a contraction map on any ball of $Z$.

Using Lemmas \ref{grupo}, \ref{duhamel} and \ref{estimativan1} we see that
\begin{equation*}
	\big\|\psi(t)e^{-it\partial_x^2}\tilde{u}_0\big\|_{Z}\leq c\|\tilde{u}_0\|_{H^{s}(\mathbb{R})}\leq c \|u_0\|_{H^{s}(\mathbb{R}^+)}\ \text{and}\; \;\big\|\psi(t)\mathcal{D}(u^2)(t)\big\|_{Z}\leq c\|u\|_{X^{s,b}}^2.
\end{equation*}
Combining Lemma \ref{edbf} and expression \eqref{demelo} we obtain
\begin{equation*}
	\big\|\psi(t)\mathcal{L}^{\lambda}h(t)\big\|_{Z}\leq c\|h\|_{H_0^{\frac{2s+1}{4}}(\mathbb{R}^+)}\\
	\leq c\big(\|f\|_{H^{\frac{2s+1}{4}}(\mathbb{R}^+)}+\|u_0\|_{H^s(\mathbb{R}^+)}+\|u\|_{Z}^2\big),
\end{equation*}
for $-1<\lambda<\frac{1}{2}$ and $s-\frac{1}{2}<\lambda<s+\frac{1}{2}$. Note that for $s>-\frac{1}{2}$, we can take $\lambda=0$ and as $s>-\frac{3}{4}$ we can choice adequate $\lambda=\lambda(s)$.
Thus, we obtain
\begin{equation}\label{humberto1}
	\|\Lambda u\|_{Z}\leq c(\|f\|_{H^{\frac{2s+1}{4}}(\mathbb{R}^+)}+\|u_0\|_{H^s(\mathbb{R}^+)}+\|u\|_{Z}^2).
\end{equation}
Similarly, we obtain for $u$ and $v$ in $Z$,
 
\begin{equation}\label{humberto2}
	\|\Lambda u-\Lambda v\|_{Z}\leq c^2(\|u\|_{Z}+\|v\|_{Z})\|u-v\|_{Z}.
\end{equation}
Let $B_{s,b}(c\delta)=\{u\in Z;\|u\|_{Z}\leq 2c\delta \}$.  If $u\in B_{s,b}(c\delta)$, then (\ref{humberto1}) implies
\begin{equation}
	\|\Lambda u\|_{Z}\leq c\delta+c(2c\delta)^2\leq 2c\delta.
\end{equation}
Now we choice $\delta$, such that $4c^2\delta\leq \frac{1}{2}$. By (\ref{humberto2}) if $u$ and $v$ in $B_{s,b}(c\delta)$ we have
\begin{equation*}
	\|\Lambda u-\Lambda v\|_{Z}\leq 4c^2\delta\|u-v\|_{Z}\leq \frac{1}{2}\|u-v\|_{Z}.
\end{equation*}
Thus, $\Lambda$ defines a contraction on $B_{s,b}(c\delta)$ and consequently there exists a unique function $\tilde{u}\in B_{s,b}(c\delta)$ such that $\tilde{u}=\Lambda(\tilde{u})$. Therefore $u(x,t):=\tilde{u}(x,t)|_{{\mathbb{R}^+\times[0,1]}}$ solves the IBVP \eqref{SK} with nonlinearity $N_1(u,\overline{u})=u^2$, in the time interval $[0,1]$.
\subsection*{Acknowledgments}This paper is part of my Ph.D. thesis at the Federal University of Rio de Janeiro under the guidance of my advisor Ad\'an Corcho. I want to take the opportunity to express my sincere gratitude to him. I would also like to thank the referee for a careful reading and helpful suggestions. The author was partially supported by CAPES.

\end{document}